\newcommand{\R}{\mathbb R}
\newcommand{\Z}{\mathbb Z}
\renewcommand{\k}{\kappa}
\newcommand{\rr}{{\mathbb R}}
\newcommand{\fff}{{\mathbf F}}
\newcommand{\vv}{{\mathbf v}}
\newcommand{\xx}{{\mathcal X}}
\newcommand{\lip}{\text{\rm{Lip}}}
\newcommand{\eps}{\varepsilon}
\newcommand{\paren}[1]{\left(#1\right)}
\newcommand{\pd}[2]{\frac{\partial#1}{\partial#2}}
\newcommand{\pds}[1]{\partial_{#1}}
\newcommand{\od}[2]{\frac{d#1}{d#2}}
\newcommand{\norm}[2]{\left\|{#1}\right\|_{#2}}
\newcommand{\intr}{\int_{-\infty}^\infty}
\newcommand{\oo}{{\mathcal O_\rho}}
\newcommand{\dd}{{\mathcal D}}
\newcommand{\alev}{\quad\text{a.e.}}
\newcommand{\oxy}{\oo\times X\times Y}
\newcommand{\xzw}{(\xi,z,w)}
\newcommand{\xsmxs}{(x(s)-x(\sigma))}
\newcommand{\tint}{[-T,T]}
\newcommand{\tints}[1]{[-T_{#1},T_{#1}]}
\newcommand{\neigh}{\mathcal{N}}
\DeclareMathOperator{\sgn}{sgn}
\DeclareMathOperator{\supp}{supp}
\DeclareMathOperator{\essinf}{essinf}
\newtheorem{theorem}{Theorem}[section]
\newtheorem{proposition}[theorem]{Proposition}
\newtheorem{lemma}[theorem]{Lemma}
\newtheorem{corollary}[theorem]{Corollary}
\newtheorem{definition}[theorem]{Definition}
\newtheorem{claim}[theorem]{Claim}
\numberwithin{equation}{section}
\theoremstyle{remark}
\newtheorem{remark}[theorem]{Remark}
\begin{document}
\title[The Camassa-Holm equation]{Properties of solutions
to the \\
Camassa-Holm equation on the line in\\
 a class containing the peakons}
\author{Felipe Linares}
\address[F. Linares]{IMPA\\
Instituto Matem\'atica Pura e Aplicada\\
Estrada Dona Castorina 110\\
22460-320, Rio de Janeiro, RJ\\Brazil}
\email{linares@impa.br}

\author{Gustavo Ponce}
\address[G. Ponce]{Department  of Mathematics\\
University of California\\
Santa Barbara, CA 93106\\
USA.}
\email{ponce@math.ucsb.edu}

\author{Thomas C. Sideris}
\address[T. C. Sideris]{Department  of Mathematics\\
University of California\\
Santa Barbara, CA 93106\\
USA.}
\email{ sideris@math.ucsb.edu}
\keywords{Camassa-Holm equation,  propagation of regularity}
\subjclass{Primary: 35Q51 Secondary: 37K10}

\begin{abstract} We shall study special  properties of solutions to the IVP associated to the Camassa-Holm equation
{on the line
related to the regularity and the decay of  solutions.}
The first  aim is to show how the regularity on the initial data is transferred to the corresponding solution
{in a class containing the ``peakon solutions"}. In particular, we shall show that the local regularity is similar to that exhibited by the solution of the inviscid Burger's equation with the same initial datum. The second goal is to prove that the decay results obtained in \cite{HMPZ} extend to the class of solutions considered here.
\end{abstract}

\thanks
{The first author was partially supported by CNPq and FAPERJ/Brazil.}

\dedicatory{Dedicated to Professor Nakao Hayashi on the occasion of his 60th birthday.}
\maketitle

\section{Introduction}

This work is concerned with the non-periodic Camassa-Holm (CH) equation
\begin{equation}\label{CH}
\partial_tu+\k \partial_xu +3 u \partial_xu-\partial_t \partial_x^2 u=2\partial_xu \partial_x^2u+ u\partial_x^3u, \hskip5pt \;t,\,x,\,\k\in\R.
\end{equation}

The CH equation \eqref{CH} was first noted by Fuchssteiner and Fokas \cite{FF} in their work on hereditary symmetries. Later, it was written explicitly and derived physically as a model for shallow water waves ($\kappa>0$) by Camassa and Holm \cite{CH}, who also studied its solutions. The CH equation \eqref{CH} has received considerable attention due to its remarkable properties, among them the fact that it is a bi-Hamiltonian completely integrable model  for all values of $k\in \R$, (see \cite{BSS},  \cite{CH}, \cite{CoMc}, \cite{Mc}, \cite{M}, \cite{Par} and references therein).

 By omitting the right hand side in \eqref{CH}, the CH equation reduces to the so called Benjamin-Bona-Mahony  equation \cite{BBM}, also deduced in the context of water waves. 

 The case $\k=0$ in \eqref{CH}, called the reduced Camassa-Holm RCH (see \cite{Par}), 
\begin{equation}\label{RCH}
\partial_tu+3 u \partial_xu-\partial_t \partial_x^2u=2\partial_xu \partial_x^2u+ u\partial_x^3u, \hskip5pt \;t,\,x\in\R,
\end{equation}
has motivated a great deal of research. 
It appears as a model in nonlinear dispersive waves in hyperelastic rods \cite{Dai}. The RCH equation possesses ``peakon'' solutions
\cite{CH}. In the case of a single peakon this solitary wave solution can be written as
\begin{equation}\label{peakon}
u_c(x,t)=c\, e^{-|x-ct|}, \hskip20pt c>0.
\end{equation}
The multi-peakon solutions display the  ``elastic'' collision property that reflect their soliton character. Thus, the CH equation and the Korteweg-de Vries  equation
\begin{equation}\label{KdV}
\partial_tu+ u \partial_xu+\partial_x^3u=0, \hskip10pt \;t,\,x\in\R.
\end{equation}
exhibit many features in common.

The initial value problem (IVP)  as well as the periodic boundary value problem associated to the equation \eqref{CH} has been extensively examined. 
In particular, in \cite{LO} and \cite{RB}  the local well-posedness (LWP) of the IVP was established in the Sobolev space
\[
H^s(\R)=(1-\partial_x^2)^{-s/2}L^2(\R),
\]
for $s>3/2$. The peakon solutions do not belong to these spaces,
see Corollary \ref{pro1}. However,
\[
\phi(x)=e^{-|x|}\in W^{1,\infty}(\R),
\]
where $W^{1,\infty}(\R)$ denotes the space of Lipschitz functions.

In \cite{CoEs1} Constantin and Escher proved that if $u_0\in H^1(\R)$ with $u_0-\partial_x^2u_0\in \mathcal {M}^+(\R)$, where $\mathcal{M}^+(\R)$ denotes the set of positive Randon measures with bounded total variation, then the IVP for the RCH equation \eqref{RCH} has a global weak solution $u\in L^{\infty}((0, \infty):H^1(\R))$.

 In \cite{CoMo} Constantin and Molinet improved the previous result by showing that if $u_0\in H^1(\R)$ with $u_0-\partial_x^2u_0\in \mathcal {M}^+(\R)$,
then the  IVP for the RCH equation \eqref{RCH} has a unique solution 
$$
u\in C([0,\infty):H^1(\R))\cap C^1((0,\infty):L^2(\R))
$$
 satisfying that
$ y(t)\equiv u(\cdot,t)-\partial_x^2u(\cdot,t)\in \mathcal {M}^+(\R)$ is uniformly bounded in $[0,\infty)$.

In \cite{XZ}  Xi and Zhong proved the existence of a $H^1$-global weak solution for the IVP for the RCH equation \eqref{RCH} for 
data $u_0\in H^1(\R)$.

More recently, Bressan and Constantin \cite{BC} and Bressan-Chen-Zhang \cite{BCZ}  established the existence and uniqueness, 
 respectively,  of a $H^1$ global solution for the RCH equation \eqref{RCH}. More precisely, this solution $u=u(x,t)$  is a H\"older continuous function defined in $\R \times [0,T]$ for any $T>0$ such that: 
 \begin{itemize}
\item[(i)]  for any $t\in [0,T]$,  $\,u(\cdot,t)\in H^1(\R)$,\\
\vspace{-10pt}
\item[(ii)] the map $t \to u(\cdot,t)$ is  Lipschitz continuous from $[0,T]$ to $L^2(\R)$, 
and 
\item[(iii)] it  satisfies the equation \eqref{RCH} in $L^2(\R)$ for a.e. $t\in[0,T]$.
\end{itemize}
For other well-posedness results see also \cite{GHR1}, \cite{GHR2}  and reference therein.
\vskip.1in

In the periodic case, de Lellis, Kappeler, and Topalov \cite{LKT} obtained existence, uniqueness and continuous dependence results {analogous}
 to what we shall prove here for the case of real line, in Theorem \ref{thm1}.   
Both proofs rely  on the  formulation of the IVP as an ordinary differential equation in $H^1(\R)\cap W^{1,\infty}(\R)$,
although our formulation also allows us to  examine propagation of regularity, in Theorem \ref{thm2}.

 The CH equation \eqref{CH} does not have the finite propagation speed property. In fact, if a non-trivial datum $u_0\in H^s(\R)$, $s>3/2$, has compact support, then the corresponding solution $u(\cdot,t)$ of the RCH equation \eqref{RCH} cannot have compact support any other time 
 $t\ne 0$. An
 even sharper result in this direction is given in Theorem \ref{A} of \cite{HMPZ}. However, one has that formally the RCH equation \eqref{RCH} for $u=u(x,t)$  can be rewritten in terms of
\[
m=m(x,t)=(1-\partial_x^2)u(x,t),
\]
as
\begin{equation}\label{meq}
\partial_tm+ u\, \partial_xm +2\partial_xu \,m=0, \hskip10pt \;t,\,x\in\R.
\end{equation}
Therefore, if  the data $u_0\in H^s(\R), s\geq2$, has compact support, then   $m(\cdot,t)=(1-\partial_x^2)u(\cdot,t)$, which satisfies the equation \eqref{meq}, will have compact support on the time  interval of
existence of the $H^2$-solution. This is similar to the case of the incompressible Euler equation, and the relation between the velocity and the vorticity.

Our first goal here is to establish the local well-posedness of the IVP associated to the CH equation \eqref{CH},
for a data class which includes the peakon solutions:

\begin{theorem}\label{thm1}
Given  $u_0\in X\equiv H^1(\R)\cap W^{1,\infty}(\R)$, there exist a nonincreasing function $T=T(\|u_0\|_X)>0$ and a unique solution $u=u(x,t)$ of the IVP associated to the CH equation \eqref{CH}
such that
\begin{multline}
\label{class-sol}
u\in Z_T
\equiv C([-T,T]\!:\!H^1(\R))
\cap L^{\infty}([-T,T]\!:\!W^{1,\infty}(\R))\\
\cap C^1((-T,T)\!:\!L^2(\R)),
\end{multline}
with
\[
\sup_{[-T,T]}\|u(\cdot,t)\|_X=\sup_{[-T,T]}(\|u(\cdot,t)\|_{1,2}+\|u(\cdot,t)\|_{1,\infty})\leq c\|u_0\|_X,
\]
for some universal constant $c>0$.
Moreover, given $B>0$, the map $u_0\mapsto u$, taking the data to the solution, is continuous from the ball $\{u_0\in X :\|u_0\|_X\le B\}$
into $Z_{T(B)}$.
\end{theorem}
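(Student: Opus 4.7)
The plan is to reformulate \eqref{CH} as a quasilinear transport equation with a nonlocal smoothing forcing, build solutions by mollifying the data, extract uniform $X$-bounds via analysis along the characteristic flow, and pass to the limit. Applying $(1-\partial_x^2)^{-1} = p \ast \cdot$ with Green's function $p(x) = \tfrac{1}{2} e^{-|x|}$ to \eqref{CH} produces the equivalent equation
\[
u_t + u u_x + \partial_x p \ast F[u] = 0, \qquad F[u] := \k u + u^2 + \tfrac{1}{2} u_x^2.
\]
Since $\partial_x^2 (p \ast f) = p \ast f - f$, convolution with $\partial_x p$ effectively gains a derivative, so $\partial_x p \ast F[u] \in X$ whenever $u \in X$, with a polynomial Lipschitz bound. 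The transport term $u u_x$, on the other hand, has $x$-derivative $u_x^2 + u u_{xx}$ which generally lies in neither $L^2$ nor $L^\infty$; this is the obstruction to a direct Picard iteration in $X$, and it is what forces the solution class to demand only $u_t \in L^2$.

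To produce solutions, I mollify the data, $u_0^\eps = \rho_\eps \ast u_0$, and invoke the $H^s$ local theory of \cite{LO,RB} with $s > 3/2$ to obtain smooth solutions $u^\eps$. The $H^1$-norm is conserved along smooth CH flows, so $\|u^\eps(t)\|_{1,2} \le \|u_0\|_{1,2}$. To bound $\|u^\eps_x\|_\infty$, I introduce the characteristic flow $\dot \eta^\eps = u^\eps(\eta^\eps, t)$, $\eta^\eps(0, \alpha) = \alpha$, and compute
\[
\tfrac{d}{dt}\bigl(u^\eps_x \circ \eta^\eps\bigr) = \k\, U^\eps + (U^\eps)^2 - \tfrac{1}{2}(W^\eps)^2 - \bigl(p \ast F[u^\eps]\bigr) \circ \eta^\eps,
\]
with $U^\eps = u^\eps \circ \eta^\eps$ and $W^\eps = u^\eps_x \circ \eta^\eps$. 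Using $\|p \ast g\|_\infty \le \|p\|_{L^2} \|g\|_{L^2}$ together with $\|F[u^\eps]\|_{L^2} \le C(1 + \|u^\eps_x\|_\infty) \|u^\eps\|_{1,2}$ and the $H^1$-conservation, I obtain a Riccati-type inequality $\tfrac{d}{dt} \|u^\eps_x(t)\|_\infty \le A + B \|u^\eps_x(t)\|_\infty^2$ with $A, B$ depending only on $\|u_0\|_X$. This produces a common lifespan $T = T(\|u_0\|_X) > 0$ on which $\|u^\eps(t)\|_X \le c \|u_0\|_X$, uniformly in $\eps$.

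An $L^2$-energy estimate on the difference $w = u^\eps - u^{\eps'}$, exploiting only the Lipschitz regularity of the $u^\eps$ and the smoothing of the nonlocal term, gives $\tfrac{d}{dt}\|w\|_2^2 \le C(\|u_0\|_X) \|w\|_2^2$, so $\{u^\eps\}$ is Cauchy in $C([-T,T]; L^2)$. Weak-$\ast$ compactness and the uniform $X$-bound produce a limit $u \in L^\infty([-T,T]; X) \cap C([-T,T]; H^1)$, and the reformulated equation gives $u_t = -u u_x - \partial_x p \ast F[u] \in L^\infty([-T,T]; L^2)$, yielding $u \in Z_T$. The same energy argument delivers uniqueness in $Z_T$. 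For continuous dependence of $u_0 \mapsto u$ from the ball of radius $B$ into $Z_{T(B)}$, the $L^2$-estimate suffices to control the map in $L^2$; I would upgrade to $Z_{T(B)}$-continuity via a Bona--Smith-type argument, comparing each solution with its mollified counterpart and interpolating the strong $L^2$-convergence against the uniform $X$-bound. \emph{The main obstacle} is the Riccati step: the nonpositive term $-\tfrac{1}{2}(W^\eps)^2$ does not prevent $u^\eps_x$ from diverging to $-\infty$---this is the CH wave-breaking mechanism---so extracting a lifespan depending solely on $\|u_0\|_X$ rests on sharp control of $\|p \ast F[u^\eps]\|_\infty$ through the conserved $H^1$-norm.
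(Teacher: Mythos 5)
Your overall strategy (mollify the data, derive uniform $X$-bounds along characteristics, pass to the limit via an $L^2$-contraction estimate) is genuinely different from the paper's, which instead recasts the problem as an ODE with a locally Lipschitz vector field in Lagrangian coordinates $(\xi,z,w)\in X\times X\times Y$ and obtains existence, uniqueness, and continuous dependence from the contraction principle there. Your a priori estimate is correct, and the step you flag as ``the main obstacle'' is not one: a Riccati inequality $g'\le A+Bg^2$ always yields a lifespan depending only on $g(0)$, $A$, $B$, so wave breaking obstructs global, not local, bounds. The genuine gap is the sentence asserting $\tfrac{d}{dt}\|w\|_2^2\le C(\|u_0\|_X)\|w\|_2^2$ for $w=u^\eps-u^{\eps'}$. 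The difference of the nonlocal terms contains
\[
\partial_x G\ast\Bigl(\tfrac12\bigl(\partial_xu^{\eps}+\partial_xu^{\eps'}\bigr)\,\partial_x w\Bigr),
\]
and this cannot be bounded in $L^2$ by $C(\|u_0\|_X)\|w\|_{L^2}$. Any attempt to remove the derivative from $w$ (duality/integration by parts, or writing $b\,\partial_xw=\partial_x(bw)-(\partial_xb)w$ and using $\partial_x^2G\ast{}=G\ast{}-\mathrm{id}$) produces a factor $\partial_x^2u^{\eps}$, which is not uniformly controlled in any Lebesgue space as $\eps\to0$: for the peakon it is a measure, and for general $u_0\in X$ not even that. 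Estimating crudely gives only $\tfrac{d}{dt}\|w\|_2^2\le C\|w\|_2^2+C\|w\|_2\|\partial_xw\|_2$ with $\|\partial_xw\|_2=O(1)$, which is not a contraction. This is the familiar loss of one derivative for CH below $H^{3/2}$, and it infects every remaining step of your plan: the Cauchy property of $\{u^\eps\}$; the passage to the limit in the quadratic term $(\partial_xu^\eps)^2$, for which weak-$\ast$ compactness does not suffice (you need strong $L^2_{\rm loc}$ convergence of $\partial_xu^\eps$, which the uniform bounds alone do not give); the uniqueness claim; and the Bona--Smith upgrade.

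This derivative loss is precisely what the paper's Lagrangian formulation is designed to circumvent: composing with the flow removes the transport term $u\partial_xu$ from the equation, and the remaining nonlocal maps $F_1,F_2$ are locally Lipschitz on $\oo\times X\times Y$ with no derivative loss (Theorem \ref{effloclip}), so contraction, Gronwall uniqueness, and continuous dependence all close in the Lagrangian variables and are then transported back to Eulerian ones (Theorems \ref{regthm}--\ref{contdep}). Any repair of your Eulerian argument would in effect require comparing the two solutions along their respective characteristic flows, which is in substance the paper's proof. Note finally that the continuous dependence asserted in the theorem is only into $C([-T,T]:H^1)$ together with $C^1$ in $L^2$; Remark \ref{rem1} shows that continuity into $C([-T,T]:W^{1,\infty})$ fails for peakons, so a Bona--Smith interpolation aiming at the full $X$-norm in time-continuous form is both unnecessary and impossible.
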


\begin{remark}
\label{rem0}
This result shows that, for data in the space $X$, 
 the solution  of the CH equation is 
  as regular as the corresponding solution associated to the IVP for the inviscid Burgers' equation
\[
\partial_tu+ u \partial_xu=0, \hskip5pt \;t,\,x\in\R.
\]
It will be established in Theorem \ref{thm2}  hat that this still holds for \lq\lq local regularity\rq\rq.
\end{remark}

\begin{remark}\label{rem1}
 The strong notion of local well-posedness commonly used (see \cite{Ka}) does not hold in this case. 
In addition to existence and uniqueness, this notion of  LWP requires that the solution 
 satisfy the so called {\it persistence property}, namely that
if $u_0\in Y$, then $u\in C([0,T]:Y)$ and that the map taking data to the solution is locally continuous from $Y$ to $C([0,T]:Y)$. In particular, this strong version of
LWP  guarantees that the solution flow defines a dynamical system in $Y$. 
 In our case, by assuming that $u_0\in X=H^1(\R)\cap W^{1,\infty}(\R)$, we prove that the solution flow defines a dynamical system only in $H^1(\R)$.

This is necessary if one wants to have a class of solutions which includes the peakon solutions  \eqref{peakon}. To see this,  observe that if
$$
u^c_0(x)=c e^{-|x|}\in X=H^1(\R)\cap W^{1,\infty}(\R),\,\,\,\,\,\,\,\,c>0,
$$
then the corresponding solution 
$u^c(x,t)= c \,e^{|x-ct|} \in Z_T$, characterized in \eqref{class-sol}, has the property  that
\begin{equation}\label{ex1}
u^c\notin C([0,T^*]: W^{1,\infty}(\R)), \hskip15pt\text{for any }\hskip5pt T^*>0.
\end{equation}
This follows by noticing that  for any $h>0$
$$
\|  \partial_xu^c(\cdot,h)-\partial_xu^c(\cdot,0^+)\|_{\infty}>c.
$$

Similarly, for initial data 
$$
u^{c_j}_0(x)=c_j e^{-|x|}\in X=H^1(\R)\cap W^{1,\infty}(\R),\,\,\,\,\,\,j=1,2,\;\;\;\;c_1>c_2>0,
$$
one has  solutions
$u^{c_j}(x,t)=c_j e^{|x-c_jt|},\;j=1,2.
$
It is easy to check that
$$
\| u_0^{c_1}-u_0^{c_2}\|_{1,\infty}=c_1-c_2,
$$
and that for any $h>0$
\begin{multline}\label{ex2}
\|  \partial_xu^{c_1}(\cdot,h)-\partial_xu^{c_2}(\cdot,h)\|_{\infty}\\
\geq |  \partial_xu^{c_1}((c_1h)^{-},h)-\partial_xu^{c_2}((c_1h)^{-},h)|>c_1.
\end{multline}
Hence, the continuous dependence in  $W^{1,\infty}(\R)$,
that is,  the continuity of the map from $W^{1,\infty}(\R)$ into $C([-T,T]:W^{1,\infty}(\R))$, 
fails in any time interval $[0,T]$ for any $T>0$.

\end{remark}

\begin{remark}
\label{rem6}
 The proof of Therorem \ref{thm1} is based on a contraction principle argument for  a system written in Lagrangian coordinates. 
The loss of the persistence and the continuous dependence in $W^{1,\infty}(\R)$ described in \eqref{ex1} and \eqref{ex2}
is a consequence of the return to the original unknowns in Eulerian coordinates.

\end{remark}

\begin{remark}
An examination of the proof shows that if $\|u_0\|_X+|\kappa|\le B$, then
the existence time can be taken as a nonincreasing function $T(B)$
and the solution depends continuously both on the initial data $u_0$ and the parameter $\kappa$.
\end{remark}

\begin{remark}
\label{rem2}
From the continuous dependence in Theorem \ref{thm1}, one has that the solution deduced in that theorem is the limit 
in the $L^{\infty}([-T,T]: H^1(\R))$-topology of solutions 
corresponding to $u_0\in H^s(\R)$, $s>3/2$. This is consistent with the comments in \cite{Dai} and \cite{Par} concerning the realization of the peakon as a limit of smooth solutions.

\end{remark}

\begin{remark}
\label{rem5}
 As in \cite{LKT},  Theorem \ref{thm1} is valid in the spaces
\[
H^{s,p}(\R)\cap W^{1,\infty}(\R),\;\;\;\;\;s\in [1,1+1/p) \;\;\;\;p\in(1,\infty),
\]
 see the definition in \eqref{SS} and \cite{KP}. As mentioned above, these spaces also contain the peakons (see Corollary \ref{pro1}).
\end{remark}

To state our next result on propagation of regularity, we introduce the following notation. 
For $u_0\in X$, let $u\in Z_T$ be the local solution given by Theorem \ref{thm1}.
Since $u\in C(\tint;W^{1,\infty})$,  the system
\[
\dfrac{dx(s,t)}{dt}= u(x(s,t),t),\quad
x(s,0)=s,
\]
defines a one parameter family of homeomorphisms $t\mapsto x(\cdot,t)$, for $(s,t)\in\rr\times\tint$.
For any open set $\Omega_0\subset \rr$,  we define
the family of open sets
\[
\Omega_t=\{x(s,t):s\in\Omega_0\},\quad t\in\tint.
\]

\begin{theorem}\label{thm2}
 Let $u_0\in X$ and let $u\in Z_T$ be the corresponding local solution.   Let $\Omega_0\subset\rr$ be open.
With the notation above, we have:

{\rm (a)}  If  
\begin{equation}\label{hyp1a}
u_0|_{\Omega_0}\in H^{j,p}(\Omega_0),
\end{equation}
for some $p\in[2,\infty)$ and $j\in \Z$, with $j\geq 2$, then 
\begin{equation}\label{res1}
u(\cdot,t)|_{\Omega_t}\in H^{j,p}(\Omega_t),
\end{equation}
for any $t\in[-T,T]$.

\vskip.1in
{\rm (b)}   If   
\[
u_0|_{\Omega_0}\in C^{j+\theta},
\]
for  some $j\in \Z$, with $j\geq 1$, and $\theta\in(0,1)$, then 
\[
u(\cdot,t)|_{\Omega_t}\in C^{j+\theta},
\]
for any $t\in[-T,T]$.
\end{theorem}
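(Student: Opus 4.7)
The strategy is to propagate regularity along the Lagrangian flow of $u$, exploiting the nonlocal form of the CH equation. Applying $(1-\p_x^2)^{-1}$ to \eqref{CH} yields the transport-type reformulation
\[
\p_t u + u\,\p_x u = -\p_x F, \qquad F = p\ast\bigl(u^2 + \tfrac12 (\p_x u)^2 + \k u\bigr),
\]
where $p(x)=\tfrac12 e^{-|x|}$ is the Green's function of $1-\p_x^2$. Since $u\in L^\infty([-T,T]:W^{1,\infty}(\R))$ by Theorem \ref{thm1}, both $F$ and $\p_x F$ are uniformly bounded on $\R\times[-T,T]$; moreover, iterating the identity $\p_x^2 F = F - u^2 - \tfrac12(\p_x u)^2 - \k u$ expresses $\p_x^{j+1}F$ as a polynomial in $F$, $\p_x F$, and $\p_x^m u$ for $0\le m\le j$, whose top-order contribution is exactly $-u_x\,\p_x^j u$. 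Differentiating the transport equation $j$ times and applying Leibniz, the composition $v_j(s,t) := \p_x^j u(x(s,t),t)$ satisfies the ODE
\[
\tfrac{d}{dt}v_j(s,t) = -(j+1)\,u_x(x(s,t),t)\, v_j(s,t) - R_j(x(s,t),t) - \p_x^{j+1}F(x(s,t),t),
\]
where $R_j$ is a polynomial in $\p_x^m u$ for $1\le m\le j-1$.

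For part (a), I would induct on $j\ge 2$. Given $\phi_0\in C_c^\infty(\Omega_0)$, choose $\chi_0\in C_c^\infty(\Omega_0)$ with $\chi_0\equiv 1$ on $\supp\phi_0$. Multiplying the ODE by $\phi_0(s)$, taking $L^p_s$ norms, and changing variables through the bi-Lipschitz flow $s\mapsto x(s,t)$ (whose Jacobian is uniformly bounded above and below on $[-T,T]$), the inductive $H^{j-1,p}(\Omega_t)$ control on lower derivatives, together with the $L^\infty$ bounds on $u$, $u_x$, $F$, $\p_x F$, yields
\[
\tfrac{d}{dt}\|\phi_0\,v_j(\cdot,t)\|_{L^p_s}\ \le\ C\bigl(\|\phi_0\,v_j(\cdot,t)\|_{L^p_s}+\|\chi_0\,v_j(\cdot,t)\|_{L^p_s}+1\bigr).
\]
Chaining finitely many nested cutoffs inside $\Omega_0$ and applying Gronwall gives the required bound; reverting the change of variables yields $\p_x^j u(\cdot,t)\in L^p$ on the $\phi_0$-support, and since $\phi_0$ is arbitrary, \eqref{res1} follows. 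To make the a priori estimate rigorous at the low regularity of Theorem \ref{thm1}, I would first establish it for smooth approximations $u_0^\eps\in H^s(\R)$, $s>3/2$, chosen so that $u_0^\eps\to u_0$ in $X$ and $\phi_0 u_0^\eps\to\phi_0 u_0$ in $H^{j,p}(\R)$ (for example, by mollifying $\phi_0 u_0$ and adjusting outside $\supp\phi_0$), and then pass to the limit via the continuous dependence from Theorem \ref{thm1}.

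Part (b) follows the same scheme with Hölder seminorms in $s$ in place of $L^p_s$ norms: bi-Lipschitzness of the flow preserves $C^\theta$, the algebra structure of $C^{j+\theta}$ makes the products in $R_j$ and in $\p_x^{j+1}F$ tractable, and the same Gronwall argument closes. The principal obstacle I anticipate is the apparent circularity in the nonlocal source term: $\p_x^{j+1}F$ contains at top order the very derivative $\p_x^j u$ one is trying to estimate. This is overcome by combining the identity $\p_x^2 F = F - g$ (which prevents any genuine loss of derivatives), the inductive hypothesis for the polynomial remainder $R_j$, and the absorption of the top-order contribution $-u_x\p_x^j u$ into the Gronwall drift using the global $L^\infty$ bound on $u_x$ coming from Theorem \ref{thm1}.
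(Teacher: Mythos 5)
Your part (a) is essentially the paper's argument: mollify the data, write the equation satisfied by $\partial_x^j u^{\eps}$, evaluate along the flow $x(s,t)$ so that the transport term becomes a material derivative, run a Gronwall estimate on the $L^p_s$ norm over a subregion of $\Omega_0$, use $\partial_x^2 G\ast f=G\ast f-f$ to prevent the nonlocal term from losing derivatives, and pass to the limit; the paper localizes by integrating over a shrunken interval $(a+\eps,b-\eps)$ rather than with cutoffs, which is cosmetic. For part (b), however, the paper does not Gronwall H\"older seminorms of $v_j$: it works with the Lagrangian integral equation $w=w_0-\int_0^t w^2\,d\tau+q$ and its $s$-derivatives, and runs a Picard iteration $w_{n+1}=w_0+\int_0^t w_n^2\,d\tau+q$ whose iterates visibly remain in $C([-T,T]:C^{k+\theta}(a,b))$, bootstrapping along the way the regularity of $x(s,t)-s$ and of the kernel term $q$ (via Lemma \ref{pro2}). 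Your scheme should also close, since the coefficients of the ODE for $v_j$ are $C^{\theta}$ in $s$ by induction and the bi-Lipschitz flow converts $C^{\theta}$ in $s$ to $C^{\theta}$ in $x$ for the top derivative; but the integral-equation formulation avoids the awkwardness of differentiating H\"older norms in time, which is the main thing the paper's route buys.

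One step as written would fail: you cannot choose smooth $u_0^{\eps}$ with $u_0^{\eps}\to u_0$ in $X=H^1(\R)\cap W^{1,\infty}(\R)$, because smooth functions are not dense in $W^{1,\infty}(\R)$ --- the peakon $e^{-|x|}$, the motivating example, admits no such approximation --- and mollifying $\phi_0 u_0$ and ``adjusting outside $\supp\phi_0$'' does not repair this, since the adjusted function must still match $u_0$ in the global $W^{1,\infty}$ norm. The paper uses only the uniform bound $\|u_0^{\eps}\|_X\le\|u_0\|_X$ together with $u_0^{\eps}\to u_0$ in $H^1$, then $u^{\eps}\to u$ in $C([-T,T]:H^1)$ and weak compactness of the uniformly bounded $\partial_x^j u^{\eps}$ in $L^p$ over the moving domain to identify the limit. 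Replace your $X$-convergence requirement by uniform $X$-bounds plus $H^1$-convergence, and the rest of your argument goes through as in the paper.
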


\begin{remark}
 The result in Theorem \ref{thm2} part (a) holds for fractional values $j$ of the derivative in \eqref{hyp1a} and \eqref{res1}. However, to simplify the exposition we do not consider this case.

\end{remark}

We recall the following unique continuation and decay persistence property results obtained in \cite{HMPZ}:

\begin{theorem}[\cite{HMPZ}] \label{A}  Assume that for some $T>0$ and $s>3/2$,
\[
u\in C([0,T]: H^s(\R))
\]
is a strong solution of the IVP associated to the RCH equation \eqref{RCH}. If for some $\alpha\in(1/2,1)$, $u_0(x)=u(x,0)$ satisfies  
\begin{equation}\label{h1}
|u_0(x)|= o(e^{-x})\;\;\;\;\text{and}\;\;\;\;|\partial_xu_0(x)|=  O(e^{-\alpha x}),\;\;\;\;\text{as}\;\;\;x\uparrow \infty,
\end{equation}
and there exists $t_1\in (0,T]$ such that 
\[
|u(x,t_1)|= o(e^{-x}),\;\;\;\;\;\;\text{as}\;\;\;x\uparrow \infty,
\]
then $u\equiv 0$.
\end{theorem}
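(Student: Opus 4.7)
My plan is to extract the $e^{-x}$-asymptotics of $u$ from the Green's function structure of $(1-\partial_x^2)^{-1}$ and then test it along Lagrangian characteristics. With $G(x)=\tfrac12 e^{-|x|}$, applying $(1-\partial_x^2)^{-1}$ to \eqref{RCH} gives the non-local transport form
\[
\partial_t u + u\,\partial_x u + \partial_x P = 0,\qquad P = G\ast F(u),\quad F(u):=u^2 + \tfrac12(\partial_x u)^2,
\]
and, for the flow $\dot q(t,x_0)=u(q(t,x_0),t)$, $q(0,x_0)=x_0$, the characteristic identity $\frac{d}{dt}u(q(t),t) = -\partial_x P(q(t),t)$.

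\textit{Step 1: propagate the decay.} I first show that the hypotheses on $u_0$ yield, uniformly for $t\in[0,t_1]$,
\[
|u(x,t)|, \; |\partial_x u(x,t)| \;=\; O(e^{-\alpha x})\quad\text{as }x\to\infty.
\]
The argument regularises $u_0$ in $H^s$, uses the transport law $\partial_t m + u\partial_x m + 2(\partial_x u)m = 0$ for $m=u-\partial_x^2 u$, and tests against the truncated weights $\varphi_N(x)=\min(e^{\alpha x},N)$, with the nonlocal $\partial_x P$ controlled by the Young-type bound $\|e^{\alpha\cdot}\partial_x P\|_\infty \lesssim \|e^{\alpha\cdot}F(u)\|_1$ (valid for $\alpha<1$). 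A Grönwall inequality uniform in $N$, combined with $H^s$-continuous dependence, delivers the claim. In particular $F(u)(y,t) = O(e^{-2\alpha y})$ uniformly in $t\in[0,t_1]$, so
\[
A(t) := \tfrac12\int_{\R} e^y F(u)(y,t)\,dy \;\ge\; 0
\]
is a finite, continuous function on $[0,t_1]$, vanishing iff $u(\cdot,t)\equiv 0$.

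\textit{Step 2: asymptotic of $\partial_x P$ and characteristic integration.} Splitting $\partial_x P(x,t) = -\tfrac12\!\int \sgn(x-y)e^{-|x-y|}F(u)(y,t)\,dy$ at $y=x$, the $\{y>x\}$-part is $O(e^{-2\alpha x}) = o(e^{-x})$ (using $\alpha>1/2$), while the $\{y<x\}$-part equals $-\tfrac12 e^{-x}\!\int_{-\infty}^x e^y F(u)(y,t)\,dy = -A(t)e^{-x} + o(e^{-x})$ as $x\to\infty$. For large $x_0$, the decay $u = O(e^{-\alpha x})$ gives $|q(t,x_0)-x_0| = O(t\,e^{-\alpha x_0})\to 0$, so $e^{x_0-q(s,x_0)}\to 1$ uniformly on $[0,t_1]$. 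Integrating the characteristic identity from $0$ to $t_1$ and multiplying by $e^{x_0}$,
\[
e^{x_0} u(q(t_1),t_1) - e^{x_0} u_0(x_0) \;=\; \int_0^{t_1} A(s)\,e^{x_0-q(s)}\,ds + o(1).
\]
The left side tends to $0$ as $x_0\to\infty$ by the two hypotheses $|u_0(x)|=o(e^{-x})$ and $|u(x,t_1)|=o(e^{-x})$, together with $e^{x_0}\sim e^{q(t_1)}$. Dominated convergence on the right then forces $\int_0^{t_1} A(s)\,ds = 0$; by continuity and non-negativity $A\equiv 0$, hence $u\equiv 0$ on $[0,t_1]$, which extends to $[0,T]$ by $H^s$-uniqueness.

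\textit{Main obstacle.} The crux is Step 1: keeping the same exponential rate $\alpha$ throughout the time interval, without degradation from the nonlinearity or from the nonlocal operator $G\ast$. The weighted convolution inequality $\|e^{\alpha\cdot}(G\ast h)\|_\infty \lesssim \|e^{\alpha\cdot}h\|_1$ is what keeps the Grönwall argument closable, and it is available precisely because $\alpha<1$, which is why the hypothesis restricts to $\alpha\in(1/2,1)$: the lower bound makes $e^y F(u)$ integrable at $+\infty$, while the upper bound makes the weighted convolution estimate work. After weighted persistence is secured, the remaining asymptotic expansion of $\partial_x P$ and characteristic integration are routine.
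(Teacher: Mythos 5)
Your proof is correct and follows essentially the same route as the paper: Step 1 is exactly the decay-persistence result (Theorem \ref{B}, i.e.\ Theorem \ref{decay}) that the paper invokes, and Step 2 reproduces the \cite{HMPZ} sign argument --- splitting $\partial_x G\ast F$ at $y=x$, showing the right tail is $o(e^{-x})$ via $2\alpha>1$ and the left tail is $-A(t)e^{-x}+o(e^{-x})$ with $A\ge 0$ vanishing only when $u\equiv 0$ --- the only cosmetic difference being that you integrate along characteristics (absorbing $u\partial_xu$ into the material derivative) where the paper integrates at fixed $x$ and bounds $\int_0^{t_1}u\partial_xu\,d\tau=o(e^{-x})$ directly. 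One small caution in your sketch of Step 1: the hypotheses control $u_0$ and $\partial_xu_0$ but not $\partial_x^2u_0$, so no decay is known for $m_0=u_0-\partial_x^2u_0$ and the $m$-equation is not the right vehicle; the weighted estimates must be run on the $(u,\partial_xu)$ system itself, which is precisely what your truncated-weight/Gr\"onwall mechanism (and Theorem \ref{B}) does.
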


\begin{theorem}[\cite{HMPZ}] \label{B}  Assume that for some $T>0$ and $s>3/2$,
\[
u\in C([0,T]: H^s(\R))
\]
is a strong solution of the IVP associated to the RCH equation \eqref{RCH}. If 
for some $\theta\in(0,1)$,
$u_0(x)=u(x,0)$ satisfies 
\[
|u_0(x)|,\;\;\;\;|\partial_xu_0(x)|=  O(e^{-\theta x}),\;\;\;\;\text{as}\;\;\;x\uparrow \infty,
\]
then
\[
|u(x,t)|,\;\;\;\;|\partial_xu(x,t)|=  O(e^{-\theta x}),\;\;\;\;\text{as}\;\;\;x\uparrow \infty,
\]
uniformly in the time interval $[0,T]$.
\end{theorem}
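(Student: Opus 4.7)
The plan is to run a weighted energy estimate on the nonlocal form of the equation. Using the Green's function $G(x)=\tfrac12 e^{-|x|}$ of $1-\partial_x^2$, equation \eqref{RCH} is equivalent to
\[
\partial_t u + u\partial_x u = -\partial_x\bigl(G\ast F\bigr),\qquad F=u^2+\tfrac12(\partial_x u)^2,
\]
and differentiating once in $x$, together with $\partial_x^2 G = G-\delta$, gives
\[
\partial_t \partial_x u + u\partial_x^2 u = u^2-\tfrac12(\partial_x u)^2 - G\ast F.
\]
Both are now pure transport equations with quadratic source terms, which is the form suited for weighted $L^p$ estimates.

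To make the computation rigorous I would introduce a family of truncated weights. For each $N\geq 1$, let $\phi_N\in C^\infty(\R)$ be nondecreasing with $\phi_N(x)=e^{\theta x}$ for $x\le N$, $\phi_N(x)=e^{\theta(N+1)}$ for $x\ge N+2$, and $0\le \phi_N'\le \theta\phi_N$ uniformly in $N$. The key pointwise inequality is
\[
\phi_N(x)\,e^{-|x-y|}\ \le\ \phi_N(y)\,e^{-(1-\theta)|x-y|},
\]
which immediately yields, by Young's inequality,
\[
\|\phi_N(G\ast F)\|_{L^p}\ \le\ \tfrac{1}{1-\theta}\,\|\phi_N F\|_{L^p},\qquad
\|\phi_N(\partial_x G\ast F)\|_{L^p}\ \le\ \tfrac{1}{1-\theta}\,\|\phi_N F\|_{L^p},
\]
for every $p\in[2,\infty]$ and every $N$. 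Now multiply the first equation by $p\phi_N^{\,p}\,u|u|^{p-2}$, integrate, and integrate the transport term by parts. Using $|\phi_N'|\le\theta\phi_N$ to absorb the derivative of the weight, and the convolution bound above to control the source, one obtains
\[
\tfrac{d}{dt}\|\phi_N u\|_{L^p}\ \le\ C_\theta\bigl(\|u\|_\infty+\|\partial_x u\|_\infty\bigr)\bigl(\|\phi_N u\|_{L^p}+\|\phi_N\partial_x u\|_{L^p}\bigr),
\]
with an identical bound for $\|\phi_N\partial_x u\|_{L^p}$ obtained from the $\partial_x u$ equation (the terms $\phi_N^{\,p}(\partial_x u)|\partial_x u|^{p-2}\cdot u^2$ and $\cdot(\partial_x u)^2$ are handled by Hölder, and the source is again controlled by the convolution bound). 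Since $s>3/2$ guarantees $\|u(\cdot,t)\|_\infty+\|\partial_x u(\cdot,t)\|_\infty\lesssim \|u(\cdot,t)\|_{H^s}$, Gronwall gives
\[
\|\phi_N u(\cdot,t)\|_{L^p}+\|\phi_N\partial_x u(\cdot,t)\|_{L^p}\ \le\ C_T\bigl(\|\phi_N u_0\|_{L^p}+\|\phi_N\partial_x u_0\|_{L^p}\bigr),
\]
with $C_T$ independent of $N$ and $p$.

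Finally, I take $p\to\infty$ in the preceding inequality to obtain an $L^\infty$-weighted bound with the same constant, and then $N\to\infty$ by monotone convergence. The hypothesis $|u_0(x)|,|\partial_x u_0(x)|=O(e^{-\theta x})$ as $x\uparrow\infty$ is exactly $\|e^{\theta x}u_0\|_{L^\infty(\R_+)}+\|e^{\theta x}\partial_x u_0\|_{L^\infty(\R_+)}<\infty$, so the right-hand side stays bounded in the limit and the conclusion follows. The part I expect to be the main obstacle is the simultaneous closure of the $u$ and $\partial_x u$ estimates with constants uniform both in $N$ and in $p$; the enemy is the nonlocal term $G\ast F$, and the whole argument hinges on the pointwise weight inequality above, which crucially requires the strict inequality $\theta<1$ (so that $1-\theta>0$ and the kernel $e^{-(1-\theta)|x-y|}$ is integrable). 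A secondary technical point is that the IBP identities must be justified directly for $H^s$ solutions with $s>3/2$, which is classical but needs a density/mollification argument before the weighted estimates are valid.
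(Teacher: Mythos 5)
Your proposal is correct and follows essentially the same route as the proof in \cite{HMPZ} that the paper recalls and reuses for Theorem \ref{decay}: truncated exponential weights satisfying $0\le\varphi'\le\theta\varphi$, the kernel bound $\varphi(x)e^{-|x-y|}\lesssim\varphi(y)e^{-(1-\theta)|x-y|}$ (the paper's estimate \eqref{est1}) to control the nonlocal terms $\partial_xG\ast M$ and $\partial_x^2G\ast M=G\ast M-M$, weighted $L^p$ energy estimates with constants uniform in $p$ and in the truncation, Gronwall, and then $p\to\infty$ followed by the truncation parameter going to infinity. The only cosmetic difference is that the paper's weight $\varphi_m$ is set equal to $1$ on $x\le 0$ rather than $e^{\theta x}$, which changes nothing in the argument.
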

 As a consequence of Theorem \ref{thm1} we shall obtain the following improvements of Theorems \ref{A} and \ref{B}:

\begin{theorem}\label{unique}
Assume that for some $T>0$, $u\in Z_T$
is a solution of the IVP associated to the RCH equation described in Theorem \ref{thm1}.

If $u_0(x)=u(x,0)$ satisfies 
\[
|u_0(x)|= o(e^{-x}),\;\;\;\;\text{and}\;\;\;\;|\partial_xu_0(x)|=  O(e^{-\alpha x})\;\;\;\;\text{as}\;\;\;x\uparrow \infty,
\]
 for some 
$\alpha\in(1/2,1)$,
and there exists $t_1\in (0,T]$ such that 
\[
|u(x,t_1)|= o(e^{-x}),\;\;\;\;\;\;\text{as}\;\;\;x\uparrow \infty,
\]
then $u\equiv 0$.

\end{theorem}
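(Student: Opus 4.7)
The strategy is to reduce to Theorem \ref{A} (combined with Theorem \ref{B}) via an approximation argument. Let $\eta_n(x)=n\eta(nx)$ be a standard mollifier with $\eta\ge 0$, $\supp\eta\subset[-1,1]$, and $\int\eta=1$, and set $u_0^n:=\eta_n*u_0$. Then $u_0^n\in H^\infty(\R)$, $\|u_0^n\|_X\le\|u_0\|_X$, and $u_0^n\to u_0$ in $H^1\cap L^\infty$. Because convolution with a compactly supported kernel preserves pointwise exponential decay (up to a factor $e^{1/n}$), for $n$ large enough we have $|u_0^n(x)|=o(e^{-x})$ and $|\partial_x u_0^n(x)|\le Ce^{-\alpha x}$ as $x\to\infty$, with $C$ independent of $n$.

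By Theorem \ref{thm1}, the solutions $u^n$ lie in $Z_T$ with $T$ uniform in $n$; since $u_0^n\in H^s$ for every $s>3/2$, the standard persistence criterion (using the uniform $L^\infty$ bound on $\partial_x u^n$ provided by the $Z_T$ class) gives $u^n\in C([-T,T]:H^s)$, so Theorems \ref{A} and \ref{B} apply to each $u^n$. Continuous dependence (Remark \ref{rem2}) yields $u^n\to u$ in $L^\infty([-T,T]:H^1)$, hence uniformly in $x$ by Sobolev embedding, and $\partial_x u^n\to\partial_x u$ a.e.\ along a subsequence. Applying Theorem \ref{B} to each $u^n$ produces the uniform bound $|u^n(x,t)|,\;|\partial_x u^n(x,t)|\le Ce^{-\alpha x}$ in $n$ and $t\in[0,T]$, which passes to $u$ in the limit.

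At this stage $u$ satisfies every hypothesis of Theorem \ref{A} except the $H^s$-regularity. I would then rerun the proof of Theorem \ref{A} directly in the class $Z_T$: the equation
\[
u_t+uu_x=-\partial_x(1-\partial_x^2)^{-1}\bigl(u^2+\tfrac12 u_x^2\bigr)
\]
holds in $L^2(\R)$ for a.e.\ $t$ (since $u\in C^1((-T,T):L^2)$), and the nonlocal term is a convolution with $\mp\tfrac12\sgn(x)e^{-|x|}$, perfectly well-behaved thanks to the exponential decay of $u$ and $u_x$. The weighted Gronwall-type estimates of \cite{HMPZ}, carried out in Lagrangian coordinates on quantities like $\sup_{x\ge 0} e^x|u(x,t)|$, only use $u,\,u_x\in L^\infty\cap L^2$ together with the exponential decay, all of which are available in $Z_T$. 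The hypothesis $|u(x,t_1)|=o(e^{-x})$ then forces these weighted quantities to vanish at $t_1$, and the differential inequality propagates this to $u\equiv 0$.

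The main obstacle is the last step: verifying that each line of the weighted estimates of \cite{HMPZ}, originally written for $C([0,T]:H^s)$ solutions with $s>3/2$, remains valid in the weaker class $Z_T$. This amounts to checking that the pointwise and integral manipulations along Lagrangian characteristics are justified by $u\in W^{1,\infty}\cap H^1$ in space and $u\in C^1_t L^2_x$ in time, together with the uniform exponential decay established via approximation. Given the smoothing effect of $(1-\partial_x^2)^{-1}$ and the structure of the transport nonlinearity $uu_x$, no regularity beyond $Z_T$ is in fact needed.
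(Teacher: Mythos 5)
Your first half --- mollifying the data, applying Theorem \ref{B} to the smooth approximations $u^n$ with constants uniform in $n$, and passing to the limit to obtain $|u(x,t)|,\,|\partial_xu(x,t)|=O(e^{-\alpha x})$ uniformly on $[0,T]$ --- is exactly how the paper proceeds: it isolates this step as Theorem \ref{decay}, proves it first by precisely this approximation scheme, and then feeds it into the proof of Theorem \ref{unique}. Up to that point you are on the paper's track (and you are right that Theorem \ref{A} cannot be applied to the $u^n$ themselves, since the $o(e^{-x})$ condition at time $t_1$ does not transfer to the mollified solutions).

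The gap is in your final step. You describe the unique continuation mechanism as a weighted Gronwall argument on quantities like $\sup_{x\ge 0}e^{x}|u(x,t)|$, with the hypothesis $|u(x,t_1)|=o(e^{-x})$ ``forcing these weighted quantities to vanish at $t_1$'' and a differential inequality propagating this to $u\equiv0$. This cannot work: $|u(x,t_1)|=o(e^{-x})$ is a statement about the tail as $x\uparrow\infty$ only, and does not make $\sup_{x}e^{x}|u(x,t_1)|$ equal to zero (for a nontrivial profile this supremum is a positive finite number), so there is nothing for Gronwall to propagate. Weighted Gronwall estimates are the mechanism behind the \emph{persistence} result (Theorem \ref{decay}), not behind unique continuation. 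The actual argument, from \cite{HMPZ} and reproduced in the paper, is a positivity/contradiction argument on the time-integrated equation: writing
\[
u(x,t_1)-u_0(x)+\int_0^{t_1}u\partial_xu(x,\tau)\,d\tau+\partial_xG\ast\mu(x)=0,
\qquad
\mu(x)=\int_0^{t_1}\bigl(u^2+\tfrac12(\partial_xu)^2\bigr)(x,\tau)\,d\tau\ge0,
\]
one checks from the decay just established that every term except $\partial_xG\ast\mu$ is $o(e^{-x})$, while $\partial_xG\ast\mu=E_1+E_2$ with $E_2=o(e^{-x})$ but
$-E_1=\tfrac12e^{-x}\int_{-\infty}^{x}e^{y}\mu(y)\,dy\ge \tfrac{c_0}{2}e^{-x}$ for $x\gg1$ unless $\mu\equiv0$. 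The nonnegativity of $M(u)=u^2+\tfrac12(\partial_xu)^2$ and the one-sided structure of the kernel $\partial_xG$ are the essential ingredients, and neither appears in your sketch; without this step the proof does not reach the conclusion $u\equiv0$.
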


\begin{theorem}\label{decay} Assume that for some $T>0$ 
\begin{equation*}
\begin{split}
u\in \,& C([-T,T]\!:\!H^1(\R))\cap L^{\infty}([-T,T]\!:\!W^{1,\infty}(\R))\cap C^1((-T,T)\!:\!L^2(\R))
\end{split}
\end{equation*}
is a solution of the IVP associated to the RCH equation described in Theorem \ref{thm1}.

If $u_0(x)=u(x,0)$ satisfies that for some $\theta\in(0,1)$
\[
|u_0(x)|,\;\;\;\;|\partial_xu_0(x)|=  O(e^{-\theta x})\;\;\;\;\text{as}\;\;\;x\uparrow \infty,
\]
then
\[
|u(x,t)|,\;\;\;\;|\partial_xu(x,t)|=  O(e^{-\theta x})\;\;\;\;\text{as}\;\;\;x\uparrow \infty,
\]
uniformly in the time interval $[0,T]$.
\end{theorem}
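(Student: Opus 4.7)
The plan is to combine the continuous dependence from Remark \ref{rem2} with a direct weighted $L^\infty$ propagation estimate on the integral form of the RCH equation. Writing $G(x)=\tfrac12 e^{-|x|}$, so that $(1-\partial_x^2)^{-1}=G*\,$, the equation becomes
\[
\partial_t u+u\partial_x u=-\partial_x G*F,\qquad F=u^2+\tfrac12(\partial_x u)^2,
\]
and, using $\partial_x^2 G=G-\delta$, its $\partial_x$-derivative reads
\[
\partial_t(\partial_x u)+u\,\partial_x(\partial_x u)=u^2-\tfrac12(\partial_x u)^2-G*F.
\]
The kernels $G$ and $\partial_x G$ both decay like $e^{-|\cdot|}$, and this is the feature that allows weights $e^{\theta x}$, $\theta\in(0,1)$, to be propagated.

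First I mollify the data: set $u_0^n=\rho_n*u_0$ with a standard mollifier $\rho_n$ supported in $[-1/n,1/n]$. Then $u_0^n\in H^\infty(\R)$, $\|u_0^n\|_X\le\|u_0\|_X$, $u_0^n\to u_0$ in $H^1(\R)$, and bringing $e^{\theta x}$ through the convolution yields $|u_0^n(x)|+|\partial_x u_0^n(x)|\le C_0 e^{-\theta x}$ for $x\ge M+1$ and $n\ge1$, with $C_0$ depending only on the decay of $u_0$. By Theorem \ref{thm1} each $u_0^n$ produces $u^n\in Z_T$ on the common interval $[-T,T]$ with $\sup_{[-T,T]}\|u^n\|_X\le c\|u_0\|_X$, and $u^n\to u$ in $L^\infty([-T,T]\!:\!H^1(\R))$ by Remark \ref{rem2}. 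Because $u_0^n\in H^s$ for every $s$ and the $W^{1,\infty}$-norm stays uniformly controlled, the classical blow-up criterion for the $H^s$ theory upgrades each $u^n$ to $C([-T,T]\!:\!H^s)$ for every $s>3/2$, so all pointwise manipulations below are classical for the approximants.

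Next, introduce the Lipschitz weight $w_N(x)=\min\bigl(e^{\theta\max(x,0)},\,e^{\theta N}\bigr)$; then $|w_N'|\le\theta w_N$, and a short case-by-case check gives $w_N(x)/w_N(y)\le e^{\theta(x-y)_+}$, whence
\[
\int\frac{w_N(x)}{w_N(y)}\,e^{-|x-y|}\,dy\le \frac{1}{1-\theta}+1
\]
uniformly in $x$ and $N$, so $\|w_N(G*F)\|_\infty+\|w_N(\partial_x G*F)\|_\infty\le C_\theta\|w_N F\|_\infty$. Let $x^n(s,t)$ be the flow of $u^n$, and set $U^n=u^n\circ x^n$, $V^n=(\partial_x u^n)\circ x^n$. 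Differentiating $w_N(x^n)U^n$ and $w_N(x^n)V^n$ along characteristics, substituting the two equations above, and using $\|w_N F^n\|_\infty\le\|u^n\|_{W^{1,\infty}}(\|w_Nu^n\|_\infty+\|w_N\partial_xu^n\|_\infty)$, one obtains
\[
M^n(t):=\|w_N u^n(\cdot,t)\|_\infty+\|w_N\partial_x u^n(\cdot,t)\|_\infty\le M^n(0)+C\!\int_0^t\! M^n(\tau)\,d\tau,
\]
with $C$ depending only on $\theta$ and $c\|u_0\|_X$. Gronwall and the uniform bound on $M^n(0)$ then give $M^n(t)\le K$ on $[-T,T]$ for every $n,N\ge 1$, with $K$ independent of both. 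Extracting (for each $t$) an a.e.\ convergent subsequence from $u^n(\cdot,t)\to u(\cdot,t)$ in $H^1(\R)$, passing to the limit, and then sending $N\to\infty$ yields $|u(x,t)|,\,|\partial_x u(x,t)|=O(e^{-\theta x})$ as $x\to\infty$, uniformly on $[-T,T]$. The main obstacle is uniformity of the approximation: invoking Theorem \ref{B} as a black box on each $u^n$ would give constants depending on $\|u_0^n\|_{H^s}$, which blow up as $n\to\infty$, so the weighted Gronwall argument above must be carried out using only the $X$-control supplied by Theorem \ref{thm1}.
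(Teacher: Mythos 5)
Your proposal is correct and follows essentially the same route as the paper: mollify the data, propagate the truncated weight $\min(e^{\theta\max(x,0)},e^{\theta N})$ using the kernel bound $\sup_x \varphi(x)\int e^{-|x-y|}\varphi(y)^{-1}dy\le C_\theta$ together with Gronwall, and then pass to the limit first in the truncation parameter and then in the mollification parameter via the continuous dependence of Theorem \ref{thm1}. The only (cosmetic) difference is that the paper, following \cite{HMPZ}, obtains the weighted a priori estimate through $L^p$ energy identities and lets $p\uparrow\infty$, whereas you transport along characteristics and estimate directly in $L^\infty$; both arguments hinge on the same convolution inequality and the same uniform $X$-control from Theorem \ref{thm1}.
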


\begin{remark}
\label{rem4}
Since the class of solutions considered in Theorem \ref{unique} contains the peakons  is clear that Theorem \ref{unique} is an optimal  version of Theorem \ref{A}. We observe  that with minor modifications Theorem \ref{decay} applies to solutions of the CH equation \eqref{CH}. However, we do not know whether or not
 the result in Theorem \ref{unique} can be extended to solutions of the CH equation \eqref{CH}.
\end{remark}

\begin{remark}
\label{rem7}
 As it was pointed out in \cite{Par} for the CH equation \eqref{CH} with $\k\neq 0$ the presence of the 
 {linear  dispersive term  $\kappa\partial_x(1-\partial_x^2)^{-1}u\;$} prevents the existence of non-smooth solitary waves. 
 However, Theorem \ref{thm2} shows that even in this case there is not improvement of regularity of the solution either in the $H^{s,p}$-scale or in the $C^{k+\theta}$-scale.
\end{remark}

\begin{remark}
\label{rem8}
In  \cite{ILP-cpde} Isaza, Linares, and Ponce initiated the study of the propagation of regularity for dispersive equations
considering the KdV equation \eqref{KdV}. They established the following result.
\begin{theorem}[\cite{ILP-cpde}]\label{ilp}
If  $u_0\in H^{{3/4}^{+}}(\R)$ and for some $\,l\in \Z,\,\;l\geq 1$ and $x_0\in \R$
\begin{equation*}
\|\,\partial_x^l u_0\|^2_{L^2((x_0,\infty))}=\int_{x_0}^{\infty}|\partial_x^l u_0(x)|^2dx<\infty,
\end{equation*}
then the solution $u=u(x,t)$ of the IVP associated to \eqref{KdV} provided by the local theory  in \cite{kpv-91}  satisfies  that for any $v>0$ and $\eps>0$
\begin{equation*}
\underset{0\le t\le T}{\sup}\;\int^{\infty}_{x_0+\eps -vt } (\partial_x^j u)^2(x,t)\,dx<c,
\end{equation*}
for $j=0,1, \dots, l$ with $c = c(l; \|u_0\|_{{3/4}^{+},2};\|\,\partial_x^l u_0\|_{L^2((x_0,\infty))} ; v; \eps; T)$.

In particular, for all $t\in (0,T]$, the restriction of $u(\cdot, t)$ to any interval of the form $(a, \infty)$ belongs to $H^l((a,\infty))$.

Moreover, for any $v\geq 0$, $\eps>0$ and $R>0$ 
\begin{equation*}
\int_0^T\int_{x_0+\eps -vt}^{x_0+R-vt}  (\partial_x^{l+1} u)^2(x,t)\,dx dt< c,
\end{equation*}
with  $c = c(l; \|u_0\|_{_{{3/4}^{+},2}};\|\,\partial_x^l u_0\|_{L^2((x_0,\infty))} ; v; \eps; R; T)$.
\end{theorem}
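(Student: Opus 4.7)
The plan is to run a weighted energy estimate with a smooth cutoff traveling to the left at speed $v$, inducting on the derivative level $j$. Fix $\eps,v>0$ and choose a nondecreasing $\chi\in C^{\infty}(\R)$ with $\chi\equiv 0$ on $(-\infty,0]$ and $\chi\equiv 1$ on $[1,\infty)$; set
\[
\phi(x,t)=\chi\!\bigl(2(x-x_0+vt-\tfrac{\eps}{2})/\eps\bigr),
\]
so that $\phi_x,\phi_{xxx}$ are compactly supported in $x$ uniformly in $t$, $\phi_t=v\phi_x\ge 0$, and $\phi\equiv 1$ on $\{x\ge x_0+\eps-vt\}$. Since the data lies only in $H^{{3/4}^+}$, the calculation must be performed first for smooth approximations $u^{(n)}$ with initial data converging to $u_0$ in $H^{{3/4}^+}$, with bounds independent of $n$; the continuous dependence part of the Kenig--Ponce--Vega local theory then allows passage to the limit.

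For the base case $j=0$ I would multiply KdV by $u\phi$ and integrate in $x$. Two integrations by parts turn the dispersive term into $\tfrac{3}{2}\int u_x^2\phi_x\,dx-\tfrac{1}{2}\int u^2\phi_{xxx}\,dx$; the time term becomes $\tfrac{1}{2}\tfrac{d}{dt}\int u^2\phi\,dx-\tfrac{v}{2}\int u^2\phi_x\,dx$; the nonlinearity contributes $\tfrac{1}{3}\int u^3\phi_x\,dx$, bounded by $\|u\|_{L^\infty}\|u\|_{L^2}^2$ via $H^{{3/4}^+}\hookrightarrow L^\infty$. Gronwall plus the conservation of $\|u\|_{L^2}$ gives the $L^\infty_tL^2_x$-bound and, as a byproduct, the space-time smoothing gain $\int_0^T\int u_x^2\phi_x\,dx\,dt<\infty$.

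For the induction I would differentiate KdV $j$ times,
\[
\partial_t\partial_x^j u+\partial_x^{j+3}u=-\partial_x^{j+1}(u^2/2),
\]
multiply by $(\partial_x^j u)\phi$, and integrate. The analogous computation yields
\[
\tfrac{1}{2}\tfrac{d}{dt}\int(\partial_x^ju)^2\phi\,dx+\tfrac{3}{2}\int(\partial_x^{j+1}u)^2\phi_x\,dx=R_j(t),
\]
where $R_j$ collects the harmless $\phi_{xxx},\phi_x$ pieces and the nonlinear contribution. The latter, expanded by Leibniz, splits into endpoint terms $\int u\,\partial_x^{j+1}u\,\partial_x^j u\,\phi\,dx$ and $\int\partial_x u\,(\partial_x^j u)^2\phi\,dx$ (handled by one more integration by parts plus the Kenig--Ponce--Vega smoothing bound for $\partial_x u$ in a suitable $L^p_TL^q_x$ norm), and middle terms $\int\partial_x^i u\,\partial_x^{j+1-i}u\,\partial_x^j u\,\phi\,dx$ for $2\le i\le j-1$. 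The middle terms are controlled by running the previous level of the induction with a strictly smaller $\eps$ and larger $v$, so that the space-time gain already established at level $j-1$, applied with a slightly fatter weight $\tilde\phi$, dominates the lower-order factors in the interior of the moving window via Sobolev embedding.

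The principal obstacle is precisely the handling of these middle terms: at each step one must pick a nested family of weights $\phi_{\eps,v}$ whose supports leave a margin, so that the induction transfers the space-time smoothing gain of level $j-1$ into a pointwise-in-time weighted $L^2$-bound at level $j$. Once the induction closes at level $l$, the first conclusion is immediate from the $L^\infty_tL^2_x$ part of the weighted energy inequality together with $\phi\equiv 1$ on $\{x\ge x_0+\eps-vt\}$. The second conclusion is obtained by rerunning the energy identity at $j=l$ with a different weight $\tilde\phi$ whose $\tilde\phi_x$ is a nonnegative bump supported in $\{x_0+\eps-vt<x<x_0+R-vt\}$, and reading off the space-time integral $\tfrac{3}{2}\int_0^T\int(\partial_x^{l+1}u)^2\tilde\phi_x\,dx\,dt$ on the left-hand side.
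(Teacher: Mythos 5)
This statement appears in the paper only as a quoted result from \cite{ILP-cpde} (inside Remark \ref{rem8}); the paper itself gives no proof of it, so there is no internal argument to compare against. Your outline does, however, reproduce the strategy of the cited Isaza--Linares--Ponce paper: a weighted energy estimate with a smooth cutoff travelling to the left at speed $v$, an induction on the order of the derivative, and the crucial use of the Kato local smoothing term $\tfrac32\int(\partial_x^{j+1}u)^2\phi_x\,dx$ produced at level $j$ to control the lower-order and transport contributions at level $j+1$ via a nested family of weights. The energy identity you display (the $\tfrac32\int(\partial_x^{j+1}u)^2\phi_x$ and $-\tfrac12\int(\partial_x^ju)^2\phi_{xxx}$ terms from two integrations by parts, and $\phi_t=v\phi_x$) is correct, and the recognition that the middle Leibniz terms must be fed by the previous induction level with a slightly fatter weight is exactly the right mechanism.

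Three points in your sketch are glossed over and would need real work. First, the hypothesis only gives $\partial_x^l u_0\in L^2((x_0,\infty))$ for the top order $l$; to start and run the induction you need $\partial_x^j u_0\in L^2((x_0+\delta,\infty))$ for the intermediate $1\le j<l$ as well, which requires an interpolation (weighted Gagliardo--Nirenberg on half-lines) between the global $H^{3/4^+}$ bound and the top-order one-sided bound. Second, the endpoint nonlinear terms force you to estimate $\int_0^T\|\partial_x u(\cdot,t)\|_{L^\infty_x}\,dt$, which is not available pointwise in time for $H^{3/4^+}$ data; it is supplied by the $L^4_TL^\infty_x$ Strichartz-type norm of the Kenig--Ponce--Vega theory, and this is precisely why the threshold $3/4^+$ appears --- your phrase ``a suitable $L^p_TL^q_x$ norm'' hides the quantitative heart of the matter. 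Third, the limit $u^{(n)}\to u$ cannot be taken merely by continuous dependence in $H^{3/4^+}$, since the weighted quantities being propagated are not controlled by that topology; one must extract the uniform-in-$n$ bounds and pass to the limit by weak compactness and lower semicontinuity of the weighted norms. None of these is a fatal flaw, but each is an essential ingredient of the actual proof in \cite{ILP-cpde} rather than a routine detail.
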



Comparing Theorem \ref{thm2} with Theorem \ref{ilp} and those in Kato \cite{Ka},
 one can conclude that solutions of the CH equation, contrary
to those of the KdV equation, do not gain regularity regardless of the smoothness and the decay of the data.

\end{remark}

\begin{remark} 
\label{rem9}
 Theorems \ref{thm2},  \ref{unique} and  \ref {decay} extend to solutions of the IVP associated to the Degasperis-Procesi (DP) equation \cite{DP}
\begin{equation}
\label{DPeq}
\partial_tu +4 u \partial_xu-\partial_t \partial_x^2u=3\partial_xu \partial_x^2u+ u\partial_x^3u, \hskip5pt \;t,\,x\in\R.
\end{equation}
In this case, the proof is simpler since the DP equation can be written as 
$$
\partial_tu+u\partial_x u= -\partial_x(1-\partial_x^2)^{-1}(3u^2/2),
$$
where the right hand side of the equations can be regarded as a ``lower order term''. This is not the case with the CH equation which can be
written as 
$$
\partial_tu+u \partial_xu= -\partial_x(1-\partial_x^2)^{-1}(\kappa u+u^2+(\partial_xu)^2/2).
$$

Thus, we have: 

\begin{theorem}\label{DG}

Under the same hypothesis,  the conclusions  in Theorems \ref{thm1},  \ref{thm2} and  \ref{unique} hold for solutions of the IVP associated to the DP equation \eqref{DPeq}.
\end{theorem}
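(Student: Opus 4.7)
The plan is to follow, mutatis mutandis, the arguments used earlier for the CH equation. The key structural observation, highlighted in Remark \ref{rem9}, is that DP can be rewritten as
\begin{equation*}
\partial_t u + u\partial_x u = -\tfrac{3}{2}\,G'\ast u^2,\qquad G(x) = \tfrac{1}{2}e^{-|x|},
\end{equation*}
whose right-hand side depends on $u$ alone (no $\partial_x u$) and gains one derivative thanks to convolution with $G'$. In particular, $F(u):=-\tfrac{3}{2}G'\ast u^2$ maps $X = H^1(\R)\cap W^{1,\infty}(\R)$ Lipschitz-continuously into itself on bounded sets, which is the feature that makes the DP analysis strictly simpler than the CH one.

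For the analog of Theorem \ref{thm1}, I would transcribe the Lagrangian formulation: with $\partial_t\eta(s,t) = U(s,t)$, $\eta(s,0)=s$, and $U(s,t) = u(\eta(s,t),t)$, the DP equation becomes
\begin{equation*}
\partial_t U(s,t) = F(u)\circ\eta = -\tfrac{3}{2}\int_{\R} G'(\eta(s,t)-\eta(s',t))\,U(s',t)^2\,\partial_s\eta(s',t)\,ds'.
\end{equation*}
The right side is a Lipschitz function of $(\eta,U)$ on bounded sets of the Banach space already used for CH, so a Picard iteration yields a unique local solution on $[-T,T]$ with $T = T(\|u_0\|_X)$; continuous dependence in the $H^1$ topology is immediate from the same Lipschitz estimate. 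Returning to Eulerian coordinates recovers the $Z_T$ class exactly as in Theorem \ref{thm1}.

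For the analog of Theorem \ref{thm2}, the flow $\eta(\cdot,t)$ is a bi-Lipschitz homeomorphism carrying $\Omega_0$ onto $\Omega_t$, so local Sobolev and Hölder norms on $\Omega_t$ are equivalent to those of $U$ on $\Omega_0$. Differentiating the Lagrangian ODE up to $j$ times in $s$, each $s$-derivative of the nonlocal term can be estimated by writing $\partial_x^k F(u) = -\tfrac{3}{2}\partial_x^{k-1}(G\ast \partial_x(u^2))$ plus lower-order pieces and applying Young's inequality, so no $s$-derivative is lost and a Gronwall argument closes the $H^{j,p}(\Omega_0)$ or $C^{j+\theta}(\Omega_0)$ bound. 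For the analog of Theorem \ref{unique}, I would follow the scheme of \cite{HMPZ}: along the Eulerian characteristic $X(x,t)$ with $\partial_t X = u(X,t)$, $X(x,0)=x$, one integrates the DP equation and its $x$-derivative to obtain representations of $u$ and $\partial_x u$ whose right-hand sides are convolutions of $u^2$ against $G$ and $G'$. Because these kernels decay like $e^{-|x|}$, the weighted and pointwise estimates of \cite{HMPZ} transfer with only notational changes, and the only $(\partial_x u)^2$-type term that appears is quadratic in $V=\partial_x u\circ\eta$ and can be absorbed via a Riccati-type ODE $\partial_t V = -V^2 + \text{(bounded)}$ derived from the $x$-differentiated DP equation.

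The main obstacle is careful bookkeeping rather than any new idea: one must verify that each step of the CH argument goes through with the forcing $F(u)=-\tfrac{3}{2}G'\ast u^2$ in place of the more singular CH forcing, most delicately in the continuous-dependence step where $F(u_1)-F(u_2)$ is quadratic in its arguments and must be bounded uniformly on bounded subsets of $X$. As emphasized in Remark \ref{rem9}, the disappearance of the $(\partial_x u)^2$ term in the forcing is precisely what makes this bookkeeping straightforward.
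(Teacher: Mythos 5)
Your proposal is correct and follows exactly the route the paper indicates: the paper omits the proof of Theorem \ref{DG}, stating only (in Remark \ref{rem9}) that DP can be rewritten as $\partial_tu+u\partial_xu=-\partial_x(1-\partial_x^2)^{-1}(3u^2/2)$, whose right-hand side is a lower-order term in $u$ alone, so that the CH arguments transfer with simplifications. Your sketch fills in the same Lagrangian reformulation, propagation-of-regularity bookkeeping, and \cite{HMPZ}-style unique-continuation argument (where $M(u)=\tfrac32u^2\ge0$ still gives the needed sign), consistent with the authors' intent.
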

\end{remark}

\begin{remark}
In \cite{CoEs1} and \cite{CoEs2} Constantin and Escher (see also \cite{LO}) deduced conditions on the data $u_0\in H^3(\R)$ which guarantee that the
corresponding  local solution $u\in C([0,T]:H^3(\R))$ of the IVP associated to the RCH \eqref{RCH} blows up in finite time
by showing that
$$
\lim_{t\uparrow T}\|\partial_xu(\cdot,t)\|_{\infty}=\infty,
$$
corresponding to the breaking of waves.
Observe that $H^1$-solutions of the CH equation \eqref{CH} satisfy the conservation law
$$
E(u)(t)=\int_{-\infty}^{\infty}(u^2+(\partial_xu)^2)(x,t)dx= E(u_0),
$$
so that the $H^1$-norm of the solutions constructed Theorem \ref{thm1} remains invariant 
within the existence interval.
 This highlights a sharp difference between the blow up of the CH equation and that of
  the inviscid Burgers' equation. Although in both cases the $L^{\infty}$-norm of the $x$-derivative becomes unbounded at the critical time, 
 for the CH equation the $H^1$-norm remains bounded and for Burgers' equation the $H^{1/2}$-norm
 becomes unbounded.

\end{remark}

The rest of this work is organized as follows: Section 2 contains some preliminary results to be used in the coming proofs.  The statements on existence, uniquenss, and continuous dependence given in Theorem \ref{thm1} 
will be proven in Section 3 in a series of results. Section 4 contains the proof of Theorem \ref{thm2}
on propagation of regularity, and Section 4 the proofs of Theorems \ref{unique} and  \ref{decay}. 
Since the proof of Theorem \ref{DG} is quite similar to those previously given it will be omitted.

\section{Preliminaries}


\subsection{Notation and definitions}
The standard Sobolev spaces are defined by
\begin{equation}\label{SS}
H^{s,p}(\rr)=(1-\partial_x^2)^{-1/2}L^p(\rr),\quad s\in\rr,\;1\le p<\infty,
\end{equation}
with
\[
H^s(\rr)=H^{s,2}(\rr).
\]
We define the Sobolev space 
\[
W^{1,\infty}(\rr)=\{f\in L^\infty(\rr): f'\in L^\infty(\rr)\},
\]
where the derivative is taken in the sense of distributions, and the class of Lipschitz functions 
\[
\lip=\left\{f\in L^\infty(\rr):\sup_{s_1\ne s_2}\left|\frac{f(s_1)-f(s_2)}{s_1-s_2}\right|<\infty\right\}.
\]

For notational convenience, define the functional spaces
\[
X=H^1(\rr)\cap W^{1,\infty}(\rr),
\quad 
Y=L^2(\rr)\cap L^\infty(\rr),
\quad
\xx=X\times X\times Y.
\]
The basic Lagrangian quantities and their natural spaces are:
\begin{align*}
&\xi(s,t)\in C^1(\tint : X)&&\text{displacement}\\
&x(s,t)=s+\xi(s,t)&&\text{deformation}\\
&z(s,t)\in C^1(\tint : X)&&\text{velocity}\\
&w(s,t)\in C^1(\tint : Y)&&\text{velocity gradient}
\end{align*}
and the corresponding 
Eulerian quantities are:
\begin{align*}
&s(x,t) &&\text{reference map, i.e.\ $s(x(s,t),t)=s$}\\
&S\xi(x,t)=\eta(x,t)=s(x,t)-x&& \text{reference map displacement}\\
&u(x,t)=z(s(x,t),t) &&\text{velocity}\\
&\pds{x}{u}(x,t)=w(s(x,t),t)&&\text{velocity gradient}
\end{align*}
The convolution kernel for $(1-\partial_x)^{-1}$ is denoted by
\[
G(x)=\tfrac12\exp(-|x|).
\]

\subsection{Lipschitz functions and $\ W^{1,\infty}(\rr)$}\hskip10pt

The proofs  of the following statements are not difficult and will be omitted.

\begin{lemma}
\label{ftc}
Let $f\in W^{1,\infty}(\rr)$ and define 
\[
h(s)=\int_0^sf'(\sigma)d\sigma.
\]
Then
\[
h(s)=f(s)+c\quad\text{for all $s\in\rr$},
\]
and
\[
|f(s_1)-f(s_2)|\le\norm{f'}{L^\infty}|s_1-s_2|,\quad \text{for  all}\quad s_1,s_2\in\rr.
\]
\end{lemma}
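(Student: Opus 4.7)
The plan is to establish the identity $h=f+c$ by combining a direct Lipschitz estimate on $h$ with a one-line distributional argument showing that $(h-f)'=0$, and then to read off the Lipschitz inequality for $f$ from the one already obtained for $h$. First I would observe that because $f'\in L^\infty(\rr)\subset L^1_{\text{loc}}(\rr)$, the integral defining $h(s)$ converges for every $s\in\rr$, and the trivial estimate
\[
|h(s_1)-h(s_2)|=\left|\int_{s_2}^{s_1}f'(\sigma)\,d\sigma\right|\le \norm{f'}{L^\infty}|s_1-s_2|
\]
shows that $h$ is Lipschitz with constant $\norm{f'}{L^\infty}$, and in particular continuous.

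Next I would verify that $h'=f'$ in $\mathcal D'(\rr)$. Since $h$ is Lipschitz, it is absolutely continuous on every bounded interval, so by Lebesgue's differentiation theorem its classical derivative exists almost everywhere and equals $f'(s)$ pointwise a.e., and this a.e.\ derivative agrees with the distributional derivative of $h$. Consequently $(h-f)'=0$ in $\mathcal D'(\rr)$, which implies $h-f=c$ for some constant $c\in\rr$, as distributions and hence almost everywhere. With the standard identification of $f\in W^{1,\infty}(\rr)$ with its continuous representative $s\mapsto h(s)-c$, the identity $f(s)=h(s)-c$ holds for every $s\in\rr$, and the Lipschitz bound from the first step transfers verbatim:
\[
|f(s_1)-f(s_2)|=|h(s_1)-h(s_2)|\le\norm{f'}{L^\infty}|s_1-s_2|.
\]

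The only conceptual subtlety is the a.e.\ versus pointwise distinction, which is resolved by the customary convention of identifying a $W^{1,\infty}$ function with its Lipschitz representative; once this is made explicit the argument is entirely classical, and the ``main obstacle'' is really only to quote the right pieces (absolute continuity of $h$, Lebesgue differentiation, and the fact that a distribution on $\rr$ with vanishing derivative is a constant) in the correct order.
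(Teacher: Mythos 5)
Your proof is correct: the direct Lipschitz estimate on $h$, the identification $h'=f'$ in $\mathcal D'(\rr)$, the conclusion $h-f=c$ from the vanishing of the distributional derivative, and the explicit identification of $f$ with its continuous representative together give exactly the stated conclusions. The paper omits the proof of this lemma as routine, and your argument is the standard one the authors evidently had in mind, with the only genuinely delicate point (the a.e.\ versus everywhere identification needed for the ``for all $s\in\rr$'' claim) handled explicitly.
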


\begin{lemma}
\label{ldt}
If $f\in W^{1,\infty}(\rr)$, then
\[
\lim_{a\to0}\frac{f(s+a)-f(s)}{a}=f'(s),\quad \alev
\]
\end{lemma}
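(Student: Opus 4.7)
The plan is to reduce this statement to the Lebesgue differentiation theorem by representing $f$ as an indefinite integral of $f'$, which is available to us via the preceding Lemma~\ref{ftc}.

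First I would apply Lemma~\ref{ftc}: since $f\in W^{1,\infty}(\R)$, the function $h(s)=\int_0^s f'(\sigma)\,d\sigma$ satisfies $h=f+c$ for some constant $c\in\R$, hence for every $s_1,s_2\in\R$ one has
\[
f(s_2)-f(s_1)=\int_{s_1}^{s_2}f'(\sigma)\,d\sigma.
\]
Applying this with $s_1=s$ and $s_2=s+a$, the difference quotient becomes an average:
\[
\frac{f(s+a)-f(s)}{a}=\frac{1}{a}\int_{s}^{s+a}f'(\sigma)\,d\sigma.
\]

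Next, because $f'\in L^\infty(\R)$ it lies in $L^1_{\text{loc}}(\R)$, so the Lebesgue differentiation theorem applies: for a.e.\ $s\in\R$,
\[
\lim_{a\to 0^{+}}\frac{1}{a}\int_{s}^{s+a}f'(\sigma)\,d\sigma=f'(s),
\]
and an analogous computation handles $a\to 0^{-}$ by writing $\tfrac{1}{a}\int_s^{s+a}=\tfrac{1}{|a|}\int_{s+a}^{s}$. Combining the one-sided limits gives the conclusion.

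The only subtle point is a matter of interpretation: $f'$ is an element of $L^\infty$, so the identity above holds at every \emph{Lebesgue point} of $f'$, and Lebesgue points form a full-measure set. There is no real obstacle here beyond invoking the standard differentiation theorem; the key step that makes the argument legitimate in the $W^{1,\infty}$ setting (where a priori $f$ is only defined up to null sets) is the absolutely continuous representation supplied by Lemma~\ref{ftc}.
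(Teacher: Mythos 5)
Your proof is correct: reducing the difference quotient to an integral average of $f'$ via Lemma~\ref{ftc} and then invoking the Lebesgue differentiation theorem at the Lebesgue points of $f'\in L^\infty\subset L^1_{\mathrm{loc}}$ is exactly the standard argument. The paper explicitly omits the proofs of the lemmas in this subsection as ``not difficult,'' and your route is evidently the intended one, with the remark about the Lipschitz representative correctly addressing the only genuine subtlety.
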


\begin{lemma}
\label{pwdiff}
If $f\in \lip$, then $f$ is differentiable almost everywhere, i.e.\
\[
\lim_{a\to0}\frac{f(s+a)-f(s)}{a}=g(s),\alev
\]
and  $g=f'$  in $\dd'$. As a consequence, if $f\in Y\cap \lip$
and $g\in Y$, then $f\in X$.
\end{lemma}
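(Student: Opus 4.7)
The statement has two parts: first, every $f\in\lip$ is differentiable a.e.\ with pointwise derivative $g$ equal to its distributional derivative $f'$; second, if additionally $f,g\in Y$, then $f\in X$. The plan is to establish the first part by combining a classical one-dimensional real-analysis result with a standard dominated convergence argument, and then to extract the second part directly from the definitions.

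For the a.e.\ differentiability, I would fix a Lipschitz constant $L$ for $f$ and observe that $f(s)=[f(s)+Ls]-Ls$ expresses $f$ as a difference of two nondecreasing functions on $\rr$. The classical Lebesgue differentiation theorem for monotone functions then guarantees that
\[
g(s)=\lim_{a\to 0}\frac{f(s+a)-f(s)}{a}
\]
exists for a.e.\ $s\in\rr$, with $|g(s)|\le L$; in particular $g\in L^\infty(\rr)$. This is the only step with substantive content.

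To show $g=f'$ in $\dd'(\rr)$, I would test against an arbitrary $\varphi\in C_c^\infty(\rr)$ and use the change-of-variables identity
\[
\int_\rr\frac{f(s+a)-f(s)}{a}\,\varphi(s)\,ds=-\int_\rr f(s)\,\frac{\varphi(s)-\varphi(s-a)}{a}\,ds,
\]
valid for every $a\ne 0$ by translation invariance of Lebesgue measure. On the left, the integrands are dominated a.e.\ by $L\,|\varphi(s)|\in L^1$ and converge a.e.\ to $g(s)\varphi(s)$, so the left side tends to $\int g\varphi\,ds$ by dominated convergence. On the right, $[\varphi(s)-\varphi(s-a)]/a\to\varphi'(s)$ uniformly and with uniformly compact support, while $f$ is locally bounded, so the right side tends to $-\int f\varphi'\,ds$. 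Equating the two limits yields $\langle f',\varphi\rangle=\int g\varphi\,ds$, i.e.\ $g=f'$ in $\dd'(\rr)$.

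For the consequence, suppose $f\in Y\cap\lip$ and $g\in Y$. Then $f\in L^2\cap L^\infty$, and by what has just been established its distributional derivative $g$ lies in $L^2\cap L^\infty$; hence $f\in H^1(\rr)\cap W^{1,\infty}(\rr)=X$. The only real obstacle is the a.e.\ differentiability assertion in the first step — a well-known but nontrivial consequence of Lebesgue's differentiation theorem — once that is in hand, the identification with $f'$ and the final set-theoretic conclusion are routine.
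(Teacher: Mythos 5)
Your proof is correct. The paper explicitly omits the proofs of Lemmas \ref{ftc}--\ref{pwdiff} (``not difficult and will be omitted''), so there is no argument to compare against; your route --- writing $f$ as a difference of monotone functions to invoke Lebesgue's differentiation theorem, identifying the a.e.\ limit $g$ with the distributional derivative via the difference-quotient duality and dominated convergence (legitimate here since $f\in L^\infty$ by the paper's definition of $\lip$ and the quotients are bounded by $L$), and then reading off $f\in H^1\cap W^{1,\infty}$ from the definitions --- is the standard one and correctly supplies what the paper leaves out.
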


\subsection{Deformations}

\begin{lemma} 
\label{homeo}
If $\xi\in Y$, with $1+\essinf\xi'> \rho>0$,  and $x(s)=s+\xi(s)$, then
\[
\rho(s_2-s_1)\le x(s_2)-x(s_1)\le (1+ \norm{\xi'}{L^\infty})(s_2-s_1),
\]
for all $s_2>s_1$,
$x(s)$ is strictly increasing, and $x:\rr\to\rr$ is a homeomorphism.

Finally, we have 
\[
\rho\le x'(s)\le 1+ \norm{\xi'}{L^\infty}\alev
\]

\end{lemma}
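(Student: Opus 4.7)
The plan is to reduce everything to the fundamental theorem of calculus for $W^{1,\infty}$ functions (Lemma \ref{ftc}) together with the almost-everywhere differentiability (Lemma \ref{ldt}). Since the conclusion already invokes $\|\xi'\|_{L^\infty}$, the hypotheses implicitly place $\xi$ in $W^{1,\infty}(\rr)$ (bounded below on $\xi'$ from $\essinf\xi'>\rho-1$, bounded above from the conclusion). So I may apply Lemma \ref{ftc} and write
\[
x(s_2)-x(s_1)=(s_2-s_1)+\int_{s_1}^{s_2}\xi'(\sigma)\,d\sigma=\int_{s_1}^{s_2}\bigl(1+\xi'(\sigma)\bigr)\,d\sigma,
\]
for every $s_1<s_2$.

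Next I would read off the two inequalities from the pointwise a.e. bounds on the integrand. By definition of $\essinf$, one has $1+\xi'(\sigma)\ge 1+\essinf\xi'>\rho$ for a.e.\ $\sigma\in\rr$, and certainly $1+\xi'(\sigma)\le 1+\|\xi'\|_{L^\infty}$ a.e. Integrating these bounds over $[s_1,s_2]$ immediately yields
\[
\rho(s_2-s_1)\le x(s_2)-x(s_1)\le\bigl(1+\|\xi'\|_{L^\infty}\bigr)(s_2-s_1).
\]
Strict monotonicity of $x$ follows at once from the lower bound (and $\rho>0$).

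To upgrade to a homeomorphism $x:\rr\to\rr$, I would argue: continuity of $x$ is the Lipschitz bound from the upper estimate; injectivity is strict monotonicity; surjectivity follows because the lower estimate forces $x(s)\to+\infty$ as $s\to+\infty$ and $x(s)\to-\infty$ as $s\to-\infty$, so the intermediate value theorem applies. For continuity of the inverse, the lower bound $x(s_2)-x(s_1)\ge\rho(s_2-s_1)$ translates to $|x^{-1}(y_2)-x^{-1}(y_1)|\le\rho^{-1}|y_2-y_1|$, i.e.\ $x^{-1}$ is $\rho^{-1}$-Lipschitz. Hence $x$ is a homeomorphism.

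Finally, for the a.e.\ derivative bounds, Lemma \ref{ldt} gives that $\xi'$ coincides a.e.\ with the classical difference quotient limit, hence $x'(s)=1+\xi'(s)$ exists and equals $1+\xi'(s)$ for a.e.\ $s\in\rr$; the two-sided bound $\rho\le x'(s)\le 1+\|\xi'\|_{L^\infty}$ then follows directly from the pointwise a.e.\ bounds on $\xi'$ used above. I do not anticipate any serious obstacle; the only point requiring a little care is the distinction between the essential infimum and a pointwise lower bound, which is handled by working with representatives that realize the a.e.\ inequalities throughout the integration.
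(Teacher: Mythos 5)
Your proof is correct; the paper in fact omits the proof of this lemma as routine, and your argument supplies it in exactly the way the surrounding preliminaries intend, namely by combining the fundamental theorem of calculus for $W^{1,\infty}$ functions (Lemma \ref{ftc}) with the a.e.\ differentiability statement (Lemma \ref{ldt}), integrating the a.e.\ bounds $\rho<1+\essinf\xi'\le 1+\xi'(\sigma)\le 1+\norm{\xi'}{L^\infty}$. Your remark that the hypothesis $\xi\in Y$ implicitly carries $\xi'\in L^\infty$ (as in the displacement domain $\oo\subset X$ of Definition \ref{tdef}, where the lemma is actually applied) is the right reading, and the bi-Lipschitz estimates do yield the homeomorphism claim exactly as you describe.
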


\begin{lemma}
\label{homeoinv}
If  $\xi\in Y$, with $1+\essinf\xi'>\rho> 0$, then $x(s)=s+\xi(s)$
 has a strictly increasing inverse function $s:\rr\to\rr$ which satisfies
\[
(1+\norm{\xi'}{L^\infty})^{-1}(x_2-x_1)\le s(x_2)-s(x_1)\le \rho^{-1}(x_2-x_1),
\]
for all $ x_2>x_1$, 
\[
s'(x)=(x'\circ s(x))^{-1}\alev,
\]
and
\[
(1+\norm{\xi'}{L^\infty})^{-1}\le s'(x)\le \rho^{-1}\alev
\]

\end{lemma}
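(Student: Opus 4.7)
The plan is to build Lemma \ref{homeoinv} on top of Lemma \ref{homeo} in three moves: (i) existence of the inverse with the two-sided Lipschitz estimate, (ii) the a.e.\ chain-rule identity $s'(x)=(x'\circ s(x))^{-1}$, and (iii) the a.e.\ bounds on $s'$, which are just the reciprocals of the bounds on $x'$ from Lemma \ref{homeo}.

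First, since Lemma \ref{homeo} already gives that $x:\rr\to\rr$ is a strictly increasing homeomorphism, the inverse function $s=x^{-1}$ exists and is strictly increasing. Given $x_1<x_2$, set $s_i=s(x_i)$, so $s_1<s_2$ and $x(s_i)=x_i$. Applying the two-sided estimate of Lemma \ref{homeo} to the pair $s_1<s_2$ yields
\[
\rho(s_2-s_1)\le x_2-x_1\le (1+\norm{\xi'}{L^\infty})(s_2-s_1),
\]
and dividing gives the advertised bounds on $s(x_2)-s(x_1)$. In particular $s\in\lip$ with Lipschitz constant $\rho^{-1}$.

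Next, by Lemma \ref{pwdiff} the Lipschitz function $s$ has an a.e.\ derivative $s'$. To identify it, I would use the identity $x(s(x))=x$, i.e.\ $\xi(s(x))=x-s(x)$, and work at the level of difference quotients. For $h\ne 0$, set $k=k(h)=s(x+h)-s(x)$; the bounds just proved show that $|k|\le\rho^{-1}|h|$ and $|h|\le(1+\norm{\xi'}{L^\infty})|k|$, so $h\to 0$ iff $k\to 0$. Applying $x(\cdot)$ to $s(x+h)=s(x)+k$ gives
\[
h= x(s(x)+k)-x(s(x))=k+\bigl(\xi(s(x)+k)-\xi(s(x))\bigr),
\]
hence
\[
\frac{h}{k}=1+\frac{\xi(s(x)+k)-\xi(s(x))}{k}.
\]
By Lemma \ref{ldt} (or equivalently Lemma \ref{pwdiff}), $(\xi(\sigma+k)-\xi(\sigma))/k\to \xi'(\sigma)$ for $\sigma$ in a set $E\subset\rr$ of full Lebesgue measure. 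Because $x$ is bi-Lipschitz (Lemma \ref{homeo} gives a.e.\ bounds $\rho\le x'\le 1+\norm{\xi'}{L^\infty}$, so $x$ and $x^{-1}$ both send null sets to null sets), the set $s^{-1}(E)=x(E)$ has full measure in $\rr$. For any $x\in x(E)$, letting $h\to 0$ and using $k\to 0$, the right-hand side tends to $1+\xi'(s(x))=x'(s(x))$, which is bounded below by $\rho>0$, so
\[
s'(x)=\lim_{h\to 0}\frac{k(h)}{h}=(x'(s(x)))^{-1}\alev
\]

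Finally, the a.e.\ bounds $\rho\le x'(s)\le 1+\norm{\xi'}{L^\infty}$ from Lemma \ref{homeo} combined with the identity just proved give
\[
(1+\norm{\xi'}{L^\infty})^{-1}\le s'(x)\le \rho^{-1}\alev
\]
The main subtlety I expect is justifying the last step in the chain-rule computation: neither $x$ nor $s$ is $C^1$, so the standard inverse-function argument must be replaced by the difference-quotient manipulation above, and one must carefully propagate a.e.\ statements between the $s$- and $x$-coordinates using the bi-Lipschitz property of $x$. Everything else is direct bookkeeping from Lemmas \ref{ldt}, \ref{pwdiff}, and \ref{homeo}.
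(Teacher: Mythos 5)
The paper states Lemma \ref{homeoinv} without proof (it is one of the deformation lemmas whose proofs are omitted as routine), so there is no argument to compare against; your proof is correct and is exactly the intended one, deriving the two-sided bound by inverting the estimate of Lemma \ref{homeo} and identifying $s'$ via the difference-quotient identity $h/k=1+\bigl(\xi(s(x)+k)-\xi(s(x))\bigr)/k$ together with the fact that the bi-Lipschitz map $x$ carries the full-measure set of differentiability points of $\xi$ to a full-measure set. The only point worth being explicit about, which you do handle, is that $k(h)\ne0$ for $h\ne0$ and that $h\to0$ iff $k\to0$, both consequences of the two-sided Lipschitz bounds.
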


\begin{lemma}\label{ellone}
Let  $\xi\in Y$, with $1+\essinf\xi'> \rho>0$,  and define $x(s)=s+\xi(s)$.    
 If $f\in L^1(\rr)$, 
then $f\circ x\in L^1(\rr)$,
\[
\intr f\circ x(\sigma)x'(\sigma)d\sigma =\intr f(x)dx,
\]
and $\norm{f\circ x}{L^1}\le \rho^{-1}\norm{f}{L^1}$.

Similarly, if $s(x)$ is the inverse function, then $f\circ s\in L^1(\rr)$,
\[
\intr f\circ s(x) s'(x)dx = \intr f(s) ds,
\]
and $\norm{f\circ s}{L^1}\le (1+\norm{\xi'}{L^\infty})\norm{f}{L^1}$.
\end{lemma}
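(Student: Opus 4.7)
The plan is as follows. Lemma \ref{homeo} tells us that $x(\sigma) = \sigma + \xi(\sigma)$ is a strictly increasing bi-Lipschitz homeomorphism of $\rr$ onto itself, with $\rho \le x'(\sigma) \le 1 + \norm{\xi'}{L^\infty}$ almost everywhere. Since $x$ is Lipschitz, it is absolutely continuous, so the classical Lebesgue change-of-variables formula for monotone absolutely continuous maps is available.

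I would first establish the identity $\intr f\circ x(\sigma)\,x'(\sigma)\,d\sigma = \intr f(x)\,dx$ on characteristic functions of bounded intervals. For $[a,b]\subset\rr$, since $x$ is strictly increasing with inverse $s$ provided by Lemma \ref{homeoinv}, the condition $x(\sigma)\in[a,b]$ is equivalent to $\sigma\in[s(a),s(b)]$, so
$$\intr \chi_{[a,b]}(x(\sigma))\,x'(\sigma)\,d\sigma = \int_{s(a)}^{s(b)} x'(\sigma)\,d\sigma = x(s(b))-x(s(a)) = b-a,$$
where the middle equality is the fundamental theorem of calculus applied to the Lipschitz function $x$. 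Linearity extends the identity to simple functions built from intervals, monotone convergence extends it to nonnegative measurable $f$, and the positive/negative decomposition extends it to all $f\in L^1(\rr)$.

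The measurability of $f\circ x$ for $f\in L^1(\rr)$ is handled by the fact that bi-Lipschitz maps carry Lebesgue-null sets to Lebesgue-null sets, so the Lebesgue measurable structure is preserved under composition with $x$. The $L^1$ bound then follows from $x'(\sigma)\ge\rho$ a.e., since
$$\norm{f\circ x}{L^1} = \intr |f\circ x(\sigma)|\,d\sigma \le \rho^{-1}\intr |f\circ x(\sigma)|\,x'(\sigma)\,d\sigma = \rho^{-1}\norm{f}{L^1},$$
the last equality being the change-of-variables identity applied to $|f|$.

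The statements for $f\circ s$ follow by the symmetric argument applied to the inverse homeomorphism, invoking Lemma \ref{homeoinv} in place of Lemma \ref{homeo}; the prefactor $1+\norm{\xi'}{L^\infty}$ on the $L^1$ bound arises because Lemma \ref{homeoinv} yields only the lower bound $s'(x)\ge(1+\norm{\xi'}{L^\infty})^{-1}$ a.e.\ on $s'$. There is no substantive obstacle here: the entire proof is the standard reduction to intervals together with the Lipschitz bounds provided by Lemmas \ref{homeo} and \ref{homeoinv}. The only point needing mild care is the almost-everywhere nature of $x'$, which is justified by the absolute continuity of $x$.
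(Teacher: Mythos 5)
Your proof is correct. The paper in fact omits the proof of this lemma entirely (it is among the preliminary statements the authors regard as standard), and your argument is exactly the expected one: reduce to characteristic functions of intervals via the fundamental theorem of calculus for the Lipschitz map $x$, extend by the usual measure-theoretic machinery, and read off the $L^1$ bounds from the a.e.\ bounds $\rho\le x'\le 1+\norm{\xi'}{L^\infty}$ of Lemma \ref{homeo} and their counterparts for $s'$ in Lemma \ref{homeoinv}. The only step stated a bit loosely is the passage from intervals to general Lebesgue measurable sets (monotone convergence alone gives simple functions over arbitrary measurable sets, not intervals, so one needs the intermediate step through open sets and the preservation of null sets under the bi-Lipschitz map), but you explicitly invoke the null-set preservation, so this is routine bookkeeping rather than a gap.
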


\begin{corollary}\label{ellpee}
Let $\xi\in Y$, with $1+\essinf\xi'>\rho>0$,  and define $x(s)=s+\xi(s)$.    
 If $f\in L^p$, $1\le p \le\infty$,
then $f\circ x\in L^p$ and 
\[
\norm{f\circ x}{L^p}\le \rho^{-1/p}\norm{f}{L^p}.
\]
Similarly, if $s(x)$ is the inverse function, then $f\circ s\in L^p$
and 
\[
\norm{f\circ s}{L^p}\le (1+\norm{\xi'}{L^\infty})^{1/p}\norm{f}{L^p}.
\]
\end{corollary}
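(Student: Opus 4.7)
The plan is to reduce the claim for general $p$ to the already-established $p=1$ case via the trick of considering $|f|^p$, and to handle $p=\infty$ separately by a direct essential-supremum argument.

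First I would dispose of the case $p=\infty$. Since $x:\rr\to\rr$ is a homeomorphism by Lemma \ref{homeo}, and by Lemma \ref{homeoinv} its inverse is absolutely continuous, null sets pull back to null sets under both $x$ and $s$. Hence the essential ranges of $f\circ x$ and $f$ coincide (up to a set of measure zero), giving $\norm{f\circ x}{L^\infty}\le\norm{f}{L^\infty}$; likewise for $f\circ s$. Since $\rho^{-1/\infty}=(1+\norm{\xi'}{L^\infty})^{1/\infty}=1$, the bounds hold trivially.

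For $1\le p<\infty$, suppose $f\in L^p(\rr)$, so that $|f|^p\in L^1(\rr)$. Apply Lemma \ref{ellone} with $|f|^p$ in place of $f$: this gives
\[
\intr |f\circ x(\sigma)|^p\, d\sigma
=\intr \bigl(|f|^p\bigr)\!\circ x(\sigma)\, d\sigma
\le \rho^{-1}\intr |f(x)|^p\, dx
=\rho^{-1}\norm{f}{L^p}^p.
\]
Taking $p$-th roots yields $\norm{f\circ x}{L^p}\le \rho^{-1/p}\norm{f}{L^p}$. The same argument applied to the inverse map $s(x)$, using the second half of Lemma \ref{ellone}, gives
\[
\intr |f\circ s(x)|^p\, dx
\le (1+\norm{\xi'}{L^\infty})\intr |f(s)|^p\, ds
=(1+\norm{\xi'}{L^\infty})\norm{f}{L^p}^p,
\]
and extracting $p$-th roots produces the stated constant $(1+\norm{\xi'}{L^\infty})^{1/p}$.

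There is essentially no obstacle here: the corollary is a standard layer-cake / pointwise-$p$-th-power upgrade of the $L^1$ change-of-variables formula in Lemma \ref{ellone}. The only small point to verify is that $|f|^p$ is measurable (which is automatic for measurable $f$) and that composition with $x$ or $s$ preserves measurability (which follows from $x$ and $s$ being continuous, indeed bi-Lipschitz by Lemmas \ref{homeo}--\ref{homeoinv}), so that the integrand in the displays above is genuinely in $L^1$ before the estimate is applied.
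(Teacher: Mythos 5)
Your proof is correct and is exactly the intended derivation: the paper omits the proof of Corollary \ref{ellpee}, but as a corollary of Lemma \ref{ellone} it is clearly meant to follow by applying the $L^1$ change-of-variables bound to $|f|^p$ and taking $p$-th roots, with the $p=\infty$ case handled by the null-set-preserving property of the bi-Lipschitz maps $x$ and $s$, just as you do.
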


\begin{definition}\label{tdef}
We define the displacement domain
\[
\oo=\{\xi\in X: 1+\essinf \xi'>\rho\}, \quad 0<\rho\ll1.
\]
Given $\xi\in\oo$,  set $x(s)=s+\xi(s)$, and let $s(x)$ be the inverse function described in Lemma \ref{homeoinv}.
Define the mapping $S\xi(x)=s(x)-x$.
\end{definition}

\begin{lemma}\label{homeo3}
The mapping $S$  in Definition \ref{tdef} satisfies
\[
S:\oo\to X
\]
and
\[
\norm{S\xi}{X}\le C(\rho,\norm{\xi'}{L^\infty})\norm{\xi}{X}.
\]
\end{lemma}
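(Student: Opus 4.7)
The plan is to give an explicit representation of $S\xi$ in terms of $\xi$ and the inverse function $s(x)$, then extract both the pointwise and the $L^p$ bounds from the results of the previous subsection.

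First I would observe the key identity $S\xi(x)=-\xi(s(x))$. Indeed, since $s(x(\sigma))=\sigma$ for all $\sigma$, evaluating $x(\sigma)=\sigma+\xi(\sigma)$ at $\sigma=s(x)$ gives $x=s(x)+\xi(s(x))$, i.e.\ $s(x)-x=-\xi(s(x))$. From this representation, the $L^\infty$ bound $\|S\xi\|_{L^\infty}\le\|\xi\|_{L^\infty}$ is immediate, and Corollary~\ref{ellpee} applied with $p=2$ yields
\[
\|S\xi\|_{L^2}=\|\xi\circ s\|_{L^2}\le(1+\|\xi'\|_{L^\infty})^{1/2}\|\xi\|_{L^2}.
\]

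Next I would compute the derivative of $S\xi$. Because $x(s)=s+\xi(s)$ with $\xi\in X$ is Lipschitz and strictly increasing with $x'(s)\ge\rho>0$ a.e., Lemma~\ref{homeoinv} guarantees that $s:\R\to\R$ is Lipschitz with $s'(x)=(x'\circ s(x))^{-1}=(1+\xi'(s(x)))^{-1}$ a.e. Hence $S\xi$ is Lipschitz, and a.e.
\[
(S\xi)'(x)=s'(x)-1=-\frac{\xi'(s(x))}{1+\xi'(s(x))}.
\]
Using $1+\xi'(s(x))\ge\rho$, this gives the pointwise estimate $|(S\xi)'(x)|\le\rho^{-1}|\xi'(s(x))|$, so $\|(S\xi)'\|_{L^\infty}\le\rho^{-1}\|\xi'\|_{L^\infty}$, and again Corollary~\ref{ellpee} produces
\[
\|(S\xi)'\|_{L^2}\le\rho^{-1}\|\xi'\circ s\|_{L^2}\le\rho^{-1}(1+\|\xi'\|_{L^\infty})^{1/2}\|\xi'\|_{L^2}.
\]

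Finally I would invoke Lemma~\ref{pwdiff} to conclude. The bounds just established show that $S\xi\in Y=L^2\cap L^\infty$ and that the a.e.\ derivative lies in $Y$ as well; by Lemma~\ref{pwdiff}, this a.e.\ derivative coincides with the distributional derivative, so $S\xi\in X$. Adding the four estimates gives $\|S\xi\|_X\le C(\rho,\|\xi'\|_{L^\infty})\|\xi\|_X$, as claimed. The only mildly delicate point is the identification of the pointwise derivative of the inverse with its distributional derivative, but this is already built into Lemma~\ref{homeoinv} together with Lemma~\ref{pwdiff}; aside from that, the argument is a direct assembly of the change-of-variables lemmas proved above.
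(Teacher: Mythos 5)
Your proof is correct and takes essentially the same route as the paper's: both hinge on the identity $S\xi(x)=-\xi(s(x))$, the formula $s'(x)=(1+\xi'(s(x)))^{-1}$ from Lemma~\ref{homeoinv}, and the change-of-variables bounds to control the four norms. Your constants are slightly less sharp in places (e.g.\ the paper gets $\|(S\xi)'\|_{L^2}\le\rho^{-1/2}\|\xi'\|_{L^2}$ by changing variables inside the squared integral rather than bounding pointwise first), but they remain of the required form $C(\rho,\|\xi'\|_{L^\infty})$, and your explicit appeal to Lemma~\ref{pwdiff} to identify the a.e.\ derivative with the distributional one is a detail the paper leaves implicit.
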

   
\begin{proof}
Let $\eta=S\xi$.
  Then by Lemmas \ref{homeoinv} and \ref{ellone} we have the following:
\[
\eta(x)=-\xi\circ s(x),
\]
\[
\norm{\eta}{L^\infty}=\norm{\xi}{L^\infty},\quad \norm {\eta'}{L^\infty}\le \rho^{-1}\norm{\xi'}{L^\infty},
\]
and
\[
\norm{\eta}{L^2}\le(1+\norm{\xi'}{L^\infty})\norm{\xi}{L^2}, 
\quad
\norm{\eta'}{L^2}\le \rho^{-1/2}\norm{\xi'}{L^2}.
\]

\end{proof}

\begin{lemma}\label{ycont}
If $\xi_j\in \oo$, $j=1,2$, then
\[
\norm{S\xi_1- S\xi_2}{L^\infty}\le \rho^{-1}\norm{\xi_1-\xi_2}{L^\infty},
\]
and
\[
\norm{S\xi_1- S\xi_2}{L^2}\le C(1+\norm{\xi'_1}{L^\infty}+\norm{\xi'_2}{L^\infty})^{1/2}\norm{\xi_1-\xi_2}{L^2}.
\]
\end{lemma}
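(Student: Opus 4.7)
The plan is to reduce both inequalities to a single pointwise estimate for $|S\xi_1(x)-S\xi_2(x)|=|s_1(x)-s_2(x)|$, where $s_j$ is the inverse of $x_j(s)=s+\xi_j(s)$, and then to deduce the $L^2$ bound by the change of variables from Lemma \ref{ellone}.

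First, I would fix $x\in\rr$ and set $\sigma_j=s_j(x)$, so that by definition of the inverse
\[
\sigma_1+\xi_1(\sigma_1)=x=\sigma_2+\xi_2(\sigma_2).
\]
Rearranging,
\[
x_1(\sigma_1)-x_1(\sigma_2)=x_2(\sigma_2)-x_1(\sigma_2)=\xi_2(\sigma_2)-\xi_1(\sigma_2).
\]
Applying the lower bound from Lemma \ref{homeo} to $x_1$, namely $|x_1(\sigma_1)-x_1(\sigma_2)|\ge\rho|\sigma_1-\sigma_2|$, yields the key pointwise estimate
\[
|s_1(x)-s_2(x)|\le\rho^{-1}\bigl|(\xi_1-\xi_2)\!\circ\! s_2(x)\bigr|.
\]

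Taking the essential supremum in $x$ immediately gives $\|S\xi_1-S\xi_2\|_{L^\infty}\le\rho^{-1}\|\xi_1-\xi_2\|_{L^\infty}$, since composing with $s_2$ cannot increase the sup norm. For the $L^2$ bound, I square the pointwise estimate, integrate in $x$, and apply the change of variables $x=x_2(s)$ using Lemma \ref{ellone}:
\[
\int_{\rr}\bigl|(\xi_1-\xi_2)\!\circ\! s_2(x)\bigr|^2\,dx=\int_{\rr}|\xi_1(s)-\xi_2(s)|^2\,x_2'(s)\,ds\le(1+\|\xi_2'\|_{L^\infty})\|\xi_1-\xi_2\|_{L^2}^2,
\]
where the pointwise bound $x_2'\le 1+\|\xi_2'\|_{L^\infty}$ from Lemma \ref{homeo} is used. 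Taking square roots gives $\|S\xi_1-S\xi_2\|_{L^2}\le\rho^{-1}(1+\|\xi_2'\|_{L^\infty})^{1/2}\|\xi_1-\xi_2\|_{L^2}$, which is clearly bounded by the symmetric expression in the statement (and the asymmetry could also be removed by interchanging the roles of $\xi_1$ and $\xi_2$).

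There is no real obstacle here; the only point that requires a moment of care is the very first rearrangement, which trades differences of the inverse functions $s_j$ (hard to control directly) for differences of the data $\xi_j$ composed with $s_2$ (controlled by the uniform lower bound on $x_1'$ coming from $\xi_1\in\oo$). Once that manipulation is in place, the $L^\infty$ bound is immediate and the $L^2$ bound is a one-line change of variables.
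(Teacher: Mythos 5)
Your proof is correct and follows essentially the same route as the paper: both arguments convert the difference of the inverse maps into a difference of the $\xi_j$'s via the identities $x_j\circ s_j=\mathrm{id}$ together with the lower Lipschitz bound of Lemma \ref{homeo}, and then invoke the change of variables (Lemma \ref{ellone} / Corollary \ref{ellpee}) for the $L^2$ estimate. The only difference is cosmetic: the paper averages the bounds coming from $x_1$ and $x_2$ to obtain a symmetric pointwise estimate involving both $s_1(x)$ and $s_2(x)$, whereas your one-sided version using only the lower bound for $x_1$ and the composition with $s_2$ already suffices.
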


\begin{proof}
Set $\eta_j=S\xi_j$ and $s_j(x)=x+\eta_j(x)$, $j=1,2$.  

Fix $x$, and assume that $s_1(x)>s_2(x)$.  Then by Lemmas \ref{homeo} and \ref{homeo3}
\allowdisplaybreaks
\begin{equation*}
\begin{split}
\rho\;(\eta_1(x)-\eta_2(x)&)=\rho\;(s_1(x)-s_2(x))\\
\le& \frac12(x_1\circ s_1(x)-x_1\circ s_2(x))+\frac12(x_2\circ s_1(x)-x_2\circ s_2(x))\\
=&\frac12(\xi_2\circ s_2(x)-\xi_1\circ s_2(x)-\xi_1\circ s_1(x)+\xi_2\circ s_1(x)).
\end{split}
\end{equation*}
If $s_1(x)<s_2(x)$, then a similar inequality holds with the subscripts 1 and 2 interchanged.
Therefore, we have that
\begin{equation*}
\rho\;|\eta_1(x)-\eta_2(x)|
\le \frac12|\xi_1\circ s_1(x)-\xi_2\circ s_1(x)|+\frac12|\xi_1\circ s_2(x)-\xi_2\circ s_2(x)|.
\end{equation*}

From this it follows that
\[
\rho\;\norm{S\xi_1-S\xi_2}{L^\infty}
=\rho\;\norm{\eta_1-\eta_2}{L^\infty}
\le\norm{\xi_1-\xi_2}{L^\infty}.
\]
 Also by   Corollary \ref{ellpee} we have that
\begin{equation*}
\begin{split}
\rho\;\|S\xi_1-&S\xi_2\|_{L^2}
=\rho\;\norm{\eta_1-\eta_2}{L^2}\\
\le&\norm{\xi_1\circ s_1-\xi_2\circ s_1}{L^2}
+\norm{\xi_1\circ s_2-\xi_2\circ s_2}{L^2}\\
\le&(1+\norm{\xi'_1}{L^\infty})^{1/2}\norm{\xi_1-\xi_2}{L^2}+(1+\norm{\xi'_2}{L^\infty})^{1/2}\norm{\xi_1-\xi_2}{L^2}\\
\lesssim&(1+\norm{\xi'_1}{L^\infty}+\norm{\xi'_2}{L^\infty})^{1/2}\norm{\xi_1-\xi_2}{L^2}.
\end{split}
\end{equation*}
\end{proof}

\begin{lemma}\label{homeo4}
Let $f\in L^2(\rr)$.  With the notation of Definition \ref{tdef}, the mapping
\[
\xi\mapsto f\circ s
\]
is locally uniformly continuous from  $\oo$ into $L^2(\rr)$.
\end{lemma}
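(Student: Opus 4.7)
The plan is to combine a density argument in $L^2$ with the Lipschitz estimate on $\xi\mapsto S\xi$ from Lemma \ref{ycont}. Fix $B>0$, and restrict attention to the bounded set $\oo_B=\{\xi\in\oo:\norm{\xi}{X}\le B\}$; I will prove that $\xi\mapsto f\circ s$ is uniformly continuous on $\oo_B$, which gives local uniform continuity since $\norm{\cdot}{L^\infty}\le \norm{\cdot}{X}$. For $\xi_j\in\oo_B$, let $x_j(s)=s+\xi_j(s)$ and let $s_j(x)=x+S\xi_j(x)$ be its inverse. Then $s_1-s_2=S\xi_1-S\xi_2$, so Lemma \ref{ycont} yields the key bound
\[
\norm{s_1-s_2}{L^\infty}\le \rho^{-1}\norm{\xi_1-\xi_2}{L^\infty}\le \rho^{-1}\norm{\xi_1-\xi_2}{X}.
\]

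Given $\eps>0$, choose $g\in C_c(\rr)$ with $\supp(g)\subset[-R,R]$ and $\norm{f-g}{L^2}<\eps$, and split
\[
f\circ s_1-f\circ s_2=(f-g)\circ s_1+(g\circ s_1-g\circ s_2)-(f-g)\circ s_2.
\]
For the outer two pieces, Corollary \ref{ellpee} gives
\[
\norm{(f-g)\circ s_j}{L^2}\le (1+\norm{\xi_j'}{L^\infty})^{1/2}\norm{f-g}{L^2}\le (1+B)^{1/2}\eps.
\]
For the middle piece, $g\circ s_j$ is supported where $s_j(x)\in[-R,R]$, i.e.\ on $x_j([-R,R])\subseteq[-R-B,R+B]$ since $\norm{\xi_j}{L^\infty}\le B$. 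Thus, letting $\omega_g$ denote the (ordinary) modulus of continuity of the uniformly continuous function $g$,
\[
\norm{g\circ s_1-g\circ s_2}{L^2}^2\le 2(R+B)\,\omega_g\bigl(\norm{s_1-s_2}{L^\infty}\bigr)^2,
\]
which tends to $0$ as $\norm{\xi_1-\xi_2}{X}\to 0$, uniformly in $\xi_1,\xi_2\in\oo_B$. The triangle inequality then gives
\[
\limsup_{\norm{\xi_1-\xi_2}{X}\to 0}\,\norm{f\circ s_1-f\circ s_2}{L^2}\le 2(1+B)^{1/2}\eps,
\]
and letting $\eps\to 0$ completes the proof.

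The main (mild) obstacle is that $f\in L^2$ carries no pointwise information, so one cannot directly control $f(s_1(x))-f(s_2(x))$; the approximation by compactly supported continuous $g$ is what allows the $L^\infty$ bound on $s_1-s_2$ to be converted into an $L^2$ bound on $g\circ s_1-g\circ s_2$, and the uniform support bound $\supp(g\circ s_j)\subset[-R-B,R+B]$ coming from $\norm{\xi_j}{L^\infty}\le B$ is exactly what makes the argument work uniformly on $\oo_B$.
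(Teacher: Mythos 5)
Your proof is correct, and its skeleton is the same as the paper's: approximate $f$ in $L^2$ by a nice function, absorb the approximation error with the change-of-variables bound of Corollary \ref{ellpee}, and control the main term using the regularity of the approximant together with the closeness of $s_1$ and $s_2$ supplied by Lemma \ref{ycont}. The one genuine difference is how the main term is handled. The paper takes $\phi\in C_0^\infty(\rr)$ and applies the mean value theorem, so that $|\phi\circ s_1-\phi\circ s_2|\le\norm{\phi'}{L^\infty}|S\xi_1-S\xi_2|$ pointwise; it then invokes the $L^2$ estimate of Lemma \ref{ycont}, arriving at
\[
\norm{f\circ s_1-f\circ s_2}{L^2}\le C(B)\norm{f-\phi}{L^2}+C(\rho,B)\norm{\phi'}{L^\infty}\norm{\xi_1-\xi_2}{L^2},
\]
i.e.\ a modulus of continuity governed by $\norm{\xi_1-\xi_2}{L^2}$ alone. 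You instead take $g\in C_c(\rr)$, use its ordinary modulus of continuity together with the $L^\infty$ estimate $\norm{s_1-s_2}{L^\infty}\le\rho^{-1}\norm{\xi_1-\xi_2}{L^\infty}$, and convert to $L^2$ via the uniform support bound $\supp(g\circ s_j)\subset[-R-B,R+B]$; your modulus is therefore governed by $\norm{\xi_1-\xi_2}{L^\infty}$ (hence by $\norm{\xi_1-\xi_2}{X}$). Both versions prove the statement as written, and yours is arguably more elementary since it avoids differentiating the approximant. The only thing the paper's version buys is that when this lemma is cited in the Claim inside the proof of Theorem \ref{contdep}, the smallness hypothesis \eqref{deltaneigh} is phrased purely in terms of $\sup_t\norm{\xi_1(\cdot,t)-\xi_2(\cdot,t)}{L^2}$, which the paper's $L^2$-modulus delivers directly; with your version one would instead invoke the $X$-norm control \eqref{mainbd}, which is also available there, so nothing breaks downstream.
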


\begin{proof}
Choose any $B>0$, and define the bounded set
\[
N=\{\xi\in\oo: \norm{\xi}{X}\le B\}.
\]
Let $\xi_j\in N$, and set
\[
x_j(s)=s+\xi_j(s)\quad \text{and}\quad s_j(x)=x+S\xi_j(x),\quad j=1,2.
\]
If $\phi\in C_0^\infty(\rr)$,  then by Lemma \ref{ellone}, the mean value theorem,
and Lemma \ref{ycont}, we have 
\begin{equation*}
\begin{split}
\|f\circ s_1-f\circ s_2\|_{L^2}
\le &\, C(B)\;\norm{f -\phi}{L^2}+C(\rho,B)\;\norm{\phi'}{L^\infty}\;\norm {\xi_1-\xi_2}{L^2}.
\end{split}
\end{equation*}
Let $\eps >0$ be given.
Since $C_0^\infty(\rr)$ is dense in $L^2(\rr)$, we can choose $\phi$ depending only on $f$ and $B$ so that the first term is smaller than $\eps/2$.
So if
\[
\norm {\xi_1-\xi_2}{L^2}<\delta,
\]
then by choosing $\delta$ sufficiently small, the second term is also smaller than $\eps/2$.
This proves uniform continuity on $N$.
\end{proof}


\begin{lemma}\label{homeo5}
The mapping $D_xS:\oo\to L^2(\rr)$ is   continuous.
\end{lemma}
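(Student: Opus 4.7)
The first step is to obtain a closed-form expression for $D_xS\xi$. Setting $x(s)=s+\xi(s)$, Lemma \ref{homeoinv} gives $s'(x)=(x'\circ s(x))^{-1}=(1+\xi'(s(x)))^{-1}$ a.e., so
\[
D_xS\xi(x)=s'(x)-1=f(\xi'(s(x))),\qquad f(\eta):=-\frac{\eta}{1+\eta}.
\]
Since $\xi\in\oo$ we have $\xi'\ge \rho-1$ a.e., so the argument of $f$ lies in $[\rho-1,\infty)$, on which $|f'(\eta)|=(1+\eta)^{-2}\le \rho^{-2}$. Moreover, because $s_n$ is bilipschitz (Lemmas \ref{homeo}--\ref{homeoinv}), composition with $s_n$ preserves a.e.\ inequalities.

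Now take any sequence $\xi_n\to\xi$ in $X$ with $\xi_n\in\oo$, and denote the associated inverse maps by $s_n$ and $s$. I split
\[
D_xS\xi_n - D_xS\xi = \bigl[f(\xi_n'\circ s_n)-f(\xi'\circ s_n)\bigr] + \bigl[f(\xi'\circ s_n)-f(\xi'\circ s)\bigr].
\]
For the first bracket, the Lipschitz bound on $f$ together with Corollary \ref{ellpee} yields
\[
\|f(\xi_n'\circ s_n)-f(\xi'\circ s_n)\|_{L^2}\le \rho^{-2}\|(\xi_n'-\xi')\circ s_n\|_{L^2}\le \rho^{-2}(1+\|\xi_n'\|_{L^\infty})^{1/2}\|\xi_n'-\xi'\|_{L^2},
\]
which tends to $0$ since $\|\xi_n'\|_{L^\infty}$ is uniformly bounded and $\xi_n'\to\xi'$ in $L^2$.

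For the second bracket, the key observation is that $\xi'\in L^2(\R)$ is a \emph{fixed} function; hence Lemma \ref{homeo4} (with $f$ there replaced by $\xi'$) applies and gives
\[
\|\xi'\circ s_n-\xi'\circ s\|_{L^2}\to 0,
\]
so $\|f(\xi'\circ s_n)-f(\xi'\circ s)\|_{L^2}\le \rho^{-2}\|\xi'\circ s_n-\xi'\circ s\|_{L^2}\to 0$. Combining both bounds proves continuity.

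The only delicate point I anticipate is justifying the closed-form identity for $D_xS\xi$ when $\xi$ is merely in $W^{1,\infty}$, so that $s$ is a bilipschitz homeomorphism with only almost-everywhere differentiability; this is exactly the content of Lemmas \ref{homeoinv} and \ref{ellone}--\ref{ellpee}, which already handle composition and change of variables in this setting. Everything else is a Lipschitz estimate on a one-variable scalar function combined with reduction to Lemma \ref{homeo4}, where the $L^2$-continuity of the composition map was already established by a density argument.
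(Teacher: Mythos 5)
Your proof is correct and follows essentially the same route as the paper: the same closed-form expression $D_xS\xi=-\xi'\circ s/(1+\xi'\circ s)$, the same two-term splitting, with the first term handled by Corollary \ref{ellpee} and the second by Lemma \ref{homeo4} applied to the fixed function $\xi'\in L^2$. The only cosmetic difference is that you package the cancellation as a Lipschitz bound on the scalar function $f(\eta)=-\eta/(1+\eta)$, where the paper writes out the common-denominator identity explicitly; the resulting estimate with constant $\rho^{-2}$ is identical.
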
  

 \begin{proof}
Let $\xi_j\in N$, as  in the proof of Lemma \ref{homeo4}, and set
\[
x_j(s)=s+\xi_j(s)\quad \text{and}\quad s_j(x)=x+S\xi_j(x),\quad j=1,2.
\]
Then by Lemma \ref{homeoinv},
\begin{equation*}
D_xS\xi_j(x)= s_j'(x)-1
=\frac1{x'_j\circ s_j(x)}-1
=-\frac{\xi'_j\circ s_j(x)}{x'_j\circ s_j(x)}.
\end{equation*}

Therefore, by Lemma \ref{homeo}  and the fact that $x_j'(s)=1+\xi_j'(s)$, we see that
\begin{align*}
|D_xS\xi_1(x)-&D_xS\xi_2(x)|\\
=&\left|\frac{-\xi'_1\circ s_1(x)\;x_2'\circ s_2(x)+\xi'_2\circ s_2(x)\;x'_1\circ s_1(x)}{x'_1\circ s_1(x)\;x'_2\circ s_2(x)}\right|\\
=&\left|\frac{-\xi'_1\circ s_1(x)+\xi'_2\circ s_2(x)}{x'_1\circ s_1(x)\;x'_2\circ s_2(x)}\right|\\
\le&\rho^{-2}|\xi'_1\circ s_1(x)-\xi'_2\circ s_2(x)|.
\end{align*}

From this and the triangle inequality we get
\begin{equation*}
\norm{D_xS\xi_1-D_xS\xi_2}{L^2}\le  \rho^{-2}(\norm{\xi'_1\circ s_1-\xi'_1\circ s_2}{L^2}+\norm{\xi'_1\circ s_2-\xi'_2\circ s_2}{L^2}).
\end{equation*}

By Corollary \ref{ellpee}, the second term is estimated by
\[
\norm{\xi'_1\circ s_2-\xi'_2\circ s_2}{L^2}\le \rho^{-1/2} \norm{\xi'_1-\xi'_2}{L^2}.
\]
Since $\xi_1'\in L^2$,   continuity at $\xi_1$ now follows  by  Lemma \ref{homeo4}.
\end{proof}


\begin{lemma}
\label{cov}
Let $\xi\in\oo$,  define $x(s)=s+\xi(s)$.  If $M\in Y$, then
\[
\int_{-\infty}^{s}\exp(x(\sigma))M(\sigma) x'(\sigma)d\sigma=\int_{-\infty}^{x(s)}\exp(y)M(s(y))dy
\]
and
\[
\int_s^{\infty}\exp(-x(\sigma))M(\sigma) x'(\sigma)d\sigma=\int_{x(s)}^{\infty}\exp(-y)M(s(y))dy.
\]
\end{lemma}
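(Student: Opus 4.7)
The statement is a change-of-variables identity, and the plan is to derive it directly from Lemma \ref{ellone} by choosing the right integrand and exploiting the homeomorphism $x:\rr\to\rr$ from Lemma \ref{homeo}. The only real work is to check integrability at $\pm\infty$ and to verify that the cutoff $\sigma\le s$ transports correctly under the change of variables $y=x(\sigma)$.

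First I would record the qualitative properties of $x$ that I need. Since $\xi\in\oo\subset X=H^1\cap W^{1,\infty}$, the map $x(s)=s+\xi(s)$ is Lipschitz, and Lemma \ref{homeo} gives $\rho(s_2-s_1)\le x(s_2)-x(s_1)\le(1+\|\xi'\|_{L^\infty})(s_2-s_1)$. Taking $s_1=0$ and letting $s_2\to\pm\infty$ shows $x(s)\to\pm\infty$, so $x:\rr\to\rr$ is a bijection with inverse $s(\cdot)$ provided by Lemma \ref{homeoinv}. For integrability, the bound $|\xi(\sigma)|\le\|\xi\|_{L^\infty}$ yields
\[
|\exp(x(\sigma))\,M(\sigma)\,x'(\sigma)|\le(1+\|\xi'\|_{L^\infty})\,\|M\|_{L^\infty}\,e^{\|\xi\|_{L^\infty}}\,e^{\sigma},
\]
which is integrable on $(-\infty,s]$; the analogous bound with $e^{-\sigma}$ handles the tail $[s,\infty)$ in the second identity. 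On the Eulerian side, $|\exp(y)\,M(s(y))|\le\|M\|_{L^\infty}\,e^{y}$ is integrable on $(-\infty,x(s)]$.

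To apply Lemma \ref{ellone}, I would set
\[
f(y)=\exp(y)\,M(s(y))\,\mathbf{1}_{(-\infty,x(s)]}(y),
\]
which belongs to $L^1(\rr)$ by the bound just noted. Because $s\circ x(\sigma)=\sigma$ almost everywhere and $x$ is strictly increasing, we have $\mathbf{1}_{(-\infty,x(s)]}(x(\sigma))=\mathbf{1}_{(-\infty,s]}(\sigma)$, so
\[
f\circ x(\sigma)=\exp(x(\sigma))\,M(\sigma)\,\mathbf{1}_{(-\infty,s]}(\sigma).
\]
Lemma \ref{ellone} then gives $\int_{-\infty}^{s}\exp(x(\sigma))M(\sigma)x'(\sigma)\,d\sigma=\int_{-\infty}^{x(s)}\exp(y)M(s(y))\,dy$, which is the first identity. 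The second identity is obtained by the same argument with $f(y)=\exp(-y)\,M(s(y))\,\mathbf{1}_{[x(s),\infty)}(y)$.

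There is no real obstacle. The only technical point is ensuring that the indicator sets transform cleanly, which uses only strict monotonicity of $x$ (from Lemma \ref{homeo}) and the fact that $s\circ x=\mathrm{id}$ a.e. (from Lemma \ref{homeoinv}); once this is granted, the identities reduce to the change-of-variables already established in Lemma \ref{ellone}.
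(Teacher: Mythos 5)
Your proof is correct, but it takes a genuinely different route from the paper's. You reduce the identity to the global change-of-variables formula of Lemma \ref{ellone} by truncating the Eulerian integrand with the indicator $\mathbf{1}_{(-\infty,x(s)]}$ and observing that strict monotonicity of $x$ (Lemma \ref{homeo}) transports this cutoff to $\mathbf{1}_{(-\infty,s]}$ on the Lagrangian side; the $L^1$ bounds you record make Lemma \ref{ellone} applicable, and the identity drops out. (One cosmetic remark: $s\circ x=\mathrm{id}$ holds everywhere, not merely a.e., since $x$ and $s$ are mutually inverse homeomorphisms.) The paper instead treats the Eulerian integral as a function $h(x)=\int_{-\infty}^x\exp(y)M(s(y))\,dy$ of its upper limit, computes $h'(x)=\exp(x)M(s(x))$ a.e.\ by the Lebesgue Differentiation Theorem, applies the chain rule to get $\frac{d}{ds}h(x(s))=\exp(x(s))M(s)x'(s)$ a.e., and then recovers $h(x(s))$ as the integral of this derivative via the fundamental theorem of calculus for Lipschitz functions (the mechanism of Lemma \ref{ftc}). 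Your argument is shorter and leans on machinery already established; the paper's argument is essentially a rehearsal of the a.e.-differentiation technique that it reuses immediately afterwards in the proof of Lemma \ref{harddiff}, which is presumably why the authors chose it. Both are complete proofs.
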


\begin{proof}
Define
\[
h(x)=\int_{-\infty}^x\exp(y)M(s(y))dy.
\]
By the Lebesgue Differentiation Theorem,
\[
h'(x)=\lim_{a\to0}\frac{h(x+a)-h(x)}{a}=\exp(x)M(s(x)),\alev
\]
By the chain rule,
\[
\od{}{s}h(x(s))=\exp(x(s))M(s)x'(s),\alev
\]
By the same idea as Lemma \ref{ftc}
\[
h(x(s))=\int_{-\infty}^{s}\exp(x(\sigma))M(\sigma) x'(\sigma)d\sigma.
\]

\end{proof}


\subsection{Properties of the kernel}

\begin{lemma}
\label{conv}

If $\xi\in\oo$ and $x(s)=s+\xi(s)$, then
\[
G'\xsmxs=-G\xsmxs\sgn(s-\sigma),\quad s\ne\sigma,
\]
and
\[
G\xsmxs\le G(\rho\;(s-\sigma)).
\]
\end{lemma}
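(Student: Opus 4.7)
The plan is to read both claims off directly from the definition $G(y) = \tfrac12 \exp(-|y|)$ combined with the strict monotonicity statements already proved in Lemma \ref{homeo}.

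For the derivative identity, I would first compute, for $y\ne 0$,
\[
G'(y) = -\tfrac12\,\sgn(y)\,\exp(-|y|) = -\sgn(y)\,G(y).
\]
Setting $y = x(s) - x(\sigma)$ with $s \neq \sigma$, it remains to replace $\sgn(x(s)-x(\sigma))$ by $\sgn(s-\sigma)$. Lemma \ref{homeo} states that $x(\cdot)$ is strictly increasing (in fact with slope bounded below by $\rho > 0$), so $x(s) > x(\sigma) \iff s > \sigma$, and the two signs agree. This yields the first assertion.

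For the pointwise bound, I would invoke Lemma \ref{homeo} once more: for $s > \sigma$,
\[
x(s) - x(\sigma) \ge \rho\,(s-\sigma) > 0,
\]
and by symmetry $|x(s)-x(\sigma)| \ge \rho\,|s-\sigma|$ for all $s,\sigma$. Since $G(y) = \tfrac12 e^{-|y|}$ is even and strictly decreasing in $|y|$, and $G(\rho(s-\sigma)) = \tfrac12 e^{-\rho|s-\sigma|}$, we obtain
\[
G(x(s)-x(\sigma)) = \tfrac12 \exp(-|x(s)-x(\sigma)|) \le \tfrac12 \exp(-\rho|s-\sigma|) = G(\rho\,(s-\sigma)),
\]
which is the claimed inequality.

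There is no genuine obstacle here: both statements are immediate from the explicit form of $G$ and the lower Lipschitz bound on the deformation $x(s) = s + \xi(s)$ supplied by Lemma \ref{homeo}, which in turn relies on the hypothesis $1 + \essinf \xi' > \rho > 0$ built into the definition of $\oo$. The only minor point worth flagging is that $G'$ is to be understood in the pointwise (a.e.) sense away from the origin, consistent with how it will be used later in the paper when convolving against the source terms for the CH equation.
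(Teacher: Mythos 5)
Your proof is correct and follows exactly the route the paper intends: the paper's own proof is the one-line remark that the lemma ``follows by definition and Lemma \ref{homeo},'' and you have simply spelled out those two ingredients (the pointwise formula $G'(y)=-\sgn(y)G(y)$ for $y\ne0$ together with the strict monotonicity and lower bound $x(s)-x(\sigma)\ge\rho(s-\sigma)$ from Lemma \ref{homeo}). Nothing is missing.
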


\begin{proof} The results follow by definition and Lemma \ref{homeo}.
\end{proof}

\begin{lemma}
\label{glip}
Let $\xi_j\in \oo$, $j=1,2$, and set $x_j(s)=s+\xi_j(s)$.
Then
\begin{multline*}
|G(x_1(s)-x_1(\sigma))-G(x_2(s)-x_2(\sigma))|\\
\le 
G(\rho(s-\sigma))
\cdot(|\xi_1(s)-\xi_2(s)|+|\xi_1(\sigma)-\xi_2(\sigma)|)
\end{multline*}
and
\begin{multline*}
|G'(x_1(s)-x_1(\sigma))-G'(x_2(s)-x_2(\sigma))|\\
\le 
G(\rho(s-\sigma))
\cdot(|\xi_1(s)-\xi_2(s)|+|\xi_1(\sigma)-\xi_2(\sigma)|),
\end{multline*}
for all $s,\sigma\in\rr$.
\end{lemma}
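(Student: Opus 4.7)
The plan is to reduce both inequalities to a one-dimensional mean value estimate applied to $G$ and $G'$, respectively, using two facts that have already been established: strict monotonicity of each deformation $x_j(s)=s+\xi_j(s)$ (Lemma \ref{homeo}) and the kernel bound $G(x_j(s)-x_j(\sigma))\le G(\rho(s-\sigma))$ (Lemma \ref{conv}). Set $a=x_1(s)-x_1(\sigma)$ and $b=x_2(s)-x_2(\sigma)$. The elementary identity
\[
a-b=(\xi_1(s)-\xi_2(s))-(\xi_1(\sigma)-\xi_2(\sigma))
\]
immediately yields, by the triangle inequality,
\[
|a-b|\le |\xi_1(s)-\xi_2(s)|+|\xi_1(\sigma)-\xi_2(\sigma)|,
\]
which is exactly the factor appearing on the right-hand side of both bounds.

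First I would fix $s\ne\sigma$ and observe that by Lemma \ref{homeo} both $a$ and $b$ have the same strict sign as $s-\sigma$. Hence the closed interval $I$ joining $a$ and $b$ lies entirely on one side of the origin, so that both $G(y)=\tfrac12 e^{-|y|}$ and $G'(y)=-\tfrac12\sgn(y)e^{-|y|}$ are smooth on $I$, with $|G'(y)|=G(y)$ and $|G''(y)|=G(y)$ there. Since $G$ is a strictly decreasing function of $|y|$, its supremum over $I$ is attained at whichever endpoint is closer to $0$, giving
\[
\sup_{y\in I}G(y)=\max(G(a),G(b))\le G(\rho(s-\sigma)),
\]
by Lemma \ref{conv}.

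Applying the mean value theorem to $G$ on $I$ yields
\[
|G(a)-G(b)|\le \sup_{y\in I}|G'(y)|\;|a-b|\le G(\rho(s-\sigma))\,|a-b|,
\]
and applying the same theorem to $G'$ on $I$ (where it is $C^1$) yields
\[
|G'(a)-G'(b)|\le \sup_{y\in I}|G''(y)|\;|a-b|\le G(\rho(s-\sigma))\,|a-b|.
\]
Combining with the triangle-inequality estimate for $|a-b|$ gives both assertions for $s\ne\sigma$; the case $s=\sigma$ is trivial since then $a=b=0$ and the left-hand sides vanish.

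The only substantive point that needs care is the jump of $G'$ at the origin, relevant to the second estimate. This is precisely what could, a priori, obstruct the naive mean value argument; however, the key is the observation that the strict monotonicity of each $x_j$ (Lemma \ref{homeo}) forces $a$ and $b$ to lie on the same open half-line whenever $s\ne\sigma$, so $I$ never crosses the singular point of $G'$. Once this geometric fact is in hand, the remainder of the proof is a direct application of elementary one-variable calculus.
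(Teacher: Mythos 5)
Your proof is correct and follows essentially the same route as the paper: the mean value theorem on the interval between $x_1(s)-x_1(\sigma)$ and $x_2(s)-x_2(\sigma)$, combined with the lower bound from Lemma \ref{homeo} and the kernel bound of Lemma \ref{conv}, and you correctly isolate the one delicate point (that the interval does not cross the origin). The only cosmetic difference is that the paper deduces the second inequality from the first via the identity $G'=-G\sgn$ rather than applying the mean value theorem to $G'$ directly, but both hinge on the same sign observation.
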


\begin{proof}
Assume, without loss of generality, that $s>\sigma$.  
By Lemma \ref{homeo}, 
\begin{equation}
\label{xslb}
x_j(s)-x_j(\sigma)\ge\rho(s-\sigma)>0.
\end{equation}
Then by the mean value theorem, we have
\begin{align*}
G(x_1(s)&-x_1(\sigma))-G(x_2(s)-x_2(\sigma))\\
&=\frac12\exp A\cdot [(-\xi_1(s)+\xi_2(s))+(\xi_1(\sigma)-\xi_2(\sigma))],
\end{align*}
where $A$ lies between
\[
-(x_1(s)-x_1(\sigma))
\quad\text{and}\quad
-(x_2(s)-x_2(\sigma)).
\]
Therefore, by \eqref{xslb}, we get $\frac12\exp A\le G(\rho(s-\sigma))$, and the first inequality follows.
The second inequality follows from the first and Lemma \ref{conv}.
\end{proof}

\vspace{.5mm}

We will now briefly discuss the claim made in   Remark \ref{rem5}.

\begin{proposition}\label{pro1b} If $f\in L^p(\R)$, $p\in(1,\infty)$, and there exists $x_0\in \R$ such that
$f(x_0^+)$ and $f(x_0^-)$ are defined  and $f(x_0^+)\neq f(x_0^-)$, then $f\notin H^{1/p,p}(\R)$.
\end{proposition}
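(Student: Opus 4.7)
My plan is to argue by contradiction, using a Littlewood--Paley / Strichartz-type characterization of the Bessel potential space $H^{s,p}(\R)$.

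First I would reduce to $x_0=0$ (by translation) and write $J=f(0^+)-f(0^-)\ne 0$. By the very definition of the one-sided limits, there is $\delta>0$ such that
\[
|f(y)-f(0^-)|\le |J|/4 \text{ for } y\in(-\delta,0), \quad |f(y)-f(0^+)|\le |J|/4 \text{ for } y\in(0,\delta).
\]
In particular, if $x\in(0,\delta/2)$ and $t\in(-2x,-x)$, then $x+t\in(-x,0)\subset(-\delta,0)$ and $x\in(0,\delta)$, so the triangle inequality gives
\[
|f(x+t)-f(x)|\ge |J|-|J|/4-|J|/4=|J|/2.
\]

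Next I would invoke the classical Strichartz characterization (see Stein, \emph{Singular Integrals}, Ch.~V): for $0<s<1$ and $1<p<\infty$,
\[
\|f\|_{H^{s,p}(\R)}\approx \|f\|_{L^p(\R)}+\bigl\|\mathcal D_s f\bigr\|_{L^p(\R)},\qquad \mathcal D_s f(x):=\left(\int_\R \frac{|f(x+t)-f(x)|^2}{|t|^{1+2s}}\,dt\right)^{1/2}.
\]
Using the lower bound above on the difference quotient,
\[
\mathcal D_s f(x)^2 \ge \int_{-2x}^{-x}\frac{(|J|/2)^2}{|t|^{1+2s}}\,dt\ \gtrsim\ |J|^2\, x^{-2s},\qquad x\in(0,\delta/2),
\]
so $\mathcal D_s f(x)\gtrsim |J|\,x^{-s}$ on this interval.

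Finally, setting $s=1/p$ so that $sp=1$, I would integrate to obtain
\[
\int_0^{\delta/2}\mathcal D_s f(x)^p\,dx\ \gtrsim\ |J|^p\int_0^{\delta/2}\frac{dx}{x}=+\infty,
\]
contradicting $\mathcal D_s f\in L^p(\R)$ and hence the assumption $f\in H^{1/p,p}(\R)$.

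The main obstacle is simply pinning down the right equivalent norm for $H^{s,p}$; the first-difference square function above works for $0<s<1$, $1<p<\infty$, but one could alternatively use the second-difference variant, real-interpolation between $L^p$ and $W^{1,p}$, or (for $p\ge 2$) the embedding $H^{1/p,p}\hookrightarrow W^{1/p,p}$ combined with the divergence of the Slobodeckij seminorm at a jump --- the jump lower bound is robust enough that any of these characterizations yields the same contradiction.
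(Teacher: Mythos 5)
Your proof is correct and follows essentially the same route as the paper: both arguments rest on Stein's difference characterization of $H^{s,p}(\R)$ for $s\in(0,1)$ and observe that a jump discontinuity forces the associated difference functional to behave like $x^{-s}$ near $x_0$, hence to fail to lie in $L^p$ precisely when $sp=1$. The only (harmless) variation is that you use the square-function form of the characterization and write out the divergence computation explicitly, whereas the paper cites the first-difference hypersingular integral and leaves the computation as a ``direct consequence.''
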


\begin{proof}
This is a direct consequence of the following characterization of the spaces $H^{s,p}(\R^n)$  
established in \cite{Stein}: 
Given $s\in(0,1)$, $p\in(1,\infty)$, and $f\in L^p(\R^n)$, then $f\in H^{s,p}(\R^n)$ 
if and only if
\[
\mathcal D^{s}f(x)=\lim_{{\eps\downarrow 0}}\int_{|y|\geq \eps}\frac{f(x+y)-f(x)}{|y|^{n+{s}}}dy
\]
is defined in $L^p(\R^n)$. In this case
$$
\|f\|_{s,p}=\|(1-\partial_x^2)^{s/2}f\|_p\sim \|f\|_p+\| \mathcal D^s f\|_p.
$$

\end{proof}
\begin{corollary} \label{pro1} For any $p\in (1,\infty)$
\[
\exp(-|x|)\in H^{s,p}(\R),\;\;\;\;\;\;\;\;s\in(0,1+1/p)\,
\]
but 
\[
\exp(-|x|)\notin H^{1+1/p,p}(\R).
\]
\end{corollary}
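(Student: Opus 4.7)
The plan is to combine the fractional derivative characterization from Proposition \ref{pro1b} with the explicit formula for the weak derivative $f'(x) = -\sgn(x)\,e^{-|x|}$, which is bounded and smooth away from $0$ but has a unit-jump discontinuity at the origin: $f'(0^-) = 1 \neq -1 = f'(0^+)$.

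For the negative direction, $f \in H^{1+1/p,p}(\R)$ would force $f' \in H^{1/p,p}(\R)$. But Proposition \ref{pro1b}, applied to $f'$ with $x_0 = 0$, rules this out immediately. For the positive direction, I would split into the ranges $s \in (0,1]$ and $s \in (1, 1+1/p)$. In the first range, $f,f' \in L^p(\R) \cap L^\infty(\R)$ gives $f \in W^{1,p}(\R) = H^{1,p}(\R)$, and complex interpolation with $L^p(\R) = H^{0,p}(\R)$ produces $f \in H^{s,p}(\R)$ for every $s \in [0,1]$.

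The substantive case is $s \in (1, 1+1/p)$, where I would set $t = s - 1 \in (0, 1/p)$ and apply Proposition \ref{pro1b} to $g := f'$, reducing the problem to showing $\mathcal{D}^t g \in L^p(\R)$. For $x \neq 0$, I would split the defining integral at $|y| = |x|/2$. On $|y| < |x|/2$ the function $g$ is smooth on $[x - |x|/2, x + |x|/2]$ with $|g'| \lesssim e^{-|x|/2}$, so the mean value theorem yields
\[
\int_{|y| < |x|/2} \frac{|g(x+y) - g(x)|}{|y|^{1+t}}\,dy \lesssim e^{-|x|/2}\,|x|^{1-t}.
\]
On $|y| \geq |x|/2$, the crude bound $|g(x+y) - g(x)| \leq e^{-|x+y|} + e^{-|x|}$ together with a change of variables $z = x + y$ gives a contribution bounded by $|x|^{-t}$ near the origin and by $|x|^{-(1+t)}$ at infinity. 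Combining the two regions,
\[
|\mathcal{D}^t g(x)| \lesssim |x|^{-t} \text{ for } |x|\le 1, \qquad |\mathcal{D}^t g(x)| \lesssim |x|^{-(1+t)} \text{ for } |x|\ge 1,
\]
which is $L^p$ at infinity for any $p > 1$ and is $L^p$ near $0$ precisely when $tp < 1$, i.e., $t < 1/p$. This is the sharp range and matches the blockade produced by Proposition \ref{pro1b} in the reverse direction.

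The main obstacle is the careful bookkeeping on $|y| \ge |x|/2$ at large $|x|$: the naive bound coming from the jump at $0$ yields only $|x|^{-t}$, which is not $L^p$ at infinity. One must track the exponential decay of $g$ through the subcases $x + y > 0$ vs $x + y < 0$ (and, inside the latter, $|x+y| < |x|$ vs $|x+y| > |x|$) to upgrade the decay to $|x|^{-(1+t)}$. Once this is done, the matching of the near-origin threshold $tp < 1$ with the failure at $t = 1/p$ provided by Proposition \ref{pro1b} closes the gap exactly at $s = 1 + 1/p$.
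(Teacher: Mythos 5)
Your proposal is correct and follows essentially the route the paper intends: the negative direction comes from applying Proposition \ref{pro1b} to $f'=-\sgn(x)e^{-|x|}$, which has a jump at the origin, and the positive direction rests on the same Stein characterization $\mathcal D^{t}$ (the paper simply defers these computations to Chapter 3 of \cite{LP}). Your explicit splitting at $|y|=|x|/2$ and the resulting bounds $|x|^{-t}$ near the origin and $|x|^{-(1+t)}$ at infinity correctly yield the sharp threshold $t<1/p$, so the argument is complete.
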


For further details see Chapter 3 in \cite{LP}.

\vskip.5mm

We end this section proving a result useful in the remainder of this paper.

\begin{lemma} \label{pro2}
Let $f\in L^1(\R)$ and  $a, b\in \R$. Recall that $G(x)= \frac12\exp(-|x|)$.
\vspace{3mm}

{\rm(a)} If for some  $j\in \Z^+$ and $p\in[1,\infty)$
\begin{equation}
\label{pre2}
f|_{(a,b)}\in H^{j,p}(a,b),
\end{equation}
then for any $\eps>0$
\begin{equation}
\label{pre3}
G\ast f|_{(a,b)}\in H^{j+1,p}((a+\eps,b-\eps)),
\end{equation}

and 
\vskip.1in
{\rm(b)} if for some $j\in \Z^+$ and $\theta\in (0,1]$
\begin{equation}
\label{pre2a}
f|_{(a,b)}\in C^{j,\theta}(a,b),
\end{equation}
then 
\begin{equation}
\label{pre3a}
G\ast f|_{(a,b)}\in C^{j+1,\theta}(a,b).
\end{equation}
\end{lemma}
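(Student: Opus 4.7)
My plan is to decompose $f$ via a smooth cutoff so that the convolution splits into a ``main'' piece, which carries the assumed regularity of $f$, and a ``far-field'' piece that is automatically $C^\infty$ on the interval of interest because $G(x-y)$ is smooth away from $x=y$. The entire argument rests on the identity $(1-\partial_x^2)G=\delta_0$, so for $v=G*g$ one has the elliptic bootstrap $v''=v-g$.

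Fix a cutoff $\chi\in C_c^\infty(\R)$ with $\chi\equiv 1$ on $[a+\eps/2,b-\eps/2]$ and $\supp\chi\subset(a,b)$, and write $G*f=G*(\chi f)+G*((1-\chi)f)$. Since $\chi$ is smooth and compactly supported inside $(a,b)$, where $f$ has the assumed regularity, the product $\chi f$ (extended by zero) inherits this regularity globally on $\R$: $\chi f\in H^{j,p}(\R)$ in case (a), and $\chi f\in C^{j,\theta}(\R)$ with compact support in case (b).

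For the main piece, set $v=G*(\chi f)$. Young's inequality gives $\|v\|_{L^p}\leq\|G\|_{L^1}\|\chi f\|_{L^p}=\|\chi f\|_{L^p}$, so $v\in L^p(\R)$, and the identity $v''=v-\chi f\in L^p(\R)$ then gives $v\in H^{2,p}(\R)$. Differentiating this identity $k$ times produces $v^{(k+2)}=v^{(k)}-(\chi f)^{(k)}$, and induction on $k\leq j$ yields $v\in H^{j+2,p}(\R)\subset H^{j+1,p}(\R)$. In case (b), begin from $v\in C^{0,1}(\R)\subset C^{0,\theta}(\R)$ (since $G\in W^{1,\infty}(\R)$ and $\chi f\in L^1(\R)$), and the same bootstrap now in Hölder spaces gives $v\in C^{j+2,\theta}(\R)$.

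For the far-field piece, observe that if $x\in(a+\eps,b-\eps)$ and $y\in\supp((1-\chi)f)\subset\R\setminus(a+\eps/2,b-\eps/2)$, then $|x-y|\geq\eps/2>0$, so $G$ is smooth on the relevant range with $|\partial_x^k G(x-y)|=\tfrac12 e^{-|x-y|}\leq\tfrac12 e^{-\eps/2}$ for every $k\geq 0$. Since $(1-\chi)f\in L^1(\R)$, dominated convergence justifies differentiating under the integral, so $G*((1-\chi)f)\in C^\infty((a+\eps,b-\eps))$ with uniformly bounded derivatives; on this bounded interval it belongs to every $H^{k,p}$ and every $C^{k,\theta}$. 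Summing the two contributions yields the claimed regularity of $G*f$ on $(a+\eps,b-\eps)$ in case (a), and on $(a,b)$ in case (b) by letting $\eps\downarrow 0$ (Hölder regularity being local). The only delicate step is the elliptic bootstrap $v''=v-\chi f$ for the main piece, which is routine once Young's inequality and the Leibniz rule for products of $\chi\in C_c^\infty$ with $H^{j,p}$ (resp.\ $C^{j,\theta}$) functions are in hand.
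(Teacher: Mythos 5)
Your proof is correct and follows essentially the same route as the paper: the identical smooth-cutoff decomposition $G\ast f=G\ast(\chi f)+G\ast((1-\chi)f)$, with the far-field piece handled by the smoothness of $G$ away from the origin. The only (cosmetic) difference is that the paper gains the extra derivative on the main piece from $\partial_xG\in L^1(\R)$ plus Young's inequality, whereas you use the elliptic identity $(1-\partial_x^2)G=\delta_0$, which in fact yields two extra derivatives.
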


\begin{proof}
For any $\eps>0$ let $\varphi_{\eps}\in C^{\infty}_0(\R)$ with $\varphi_{\eps}(x)\geq 0$,
$$
\varphi_{\eps}(x)=1,\;x\in(a+2\eps/3,b-2\eps/3),\;\; \supp (\varphi_{\eps})\subset (a+\eps/3,b-\eps/3).
$$
Define 
$$
v(x)=G\ast f(x)=G\ast (f\varphi_{\eps})(x)+ G\ast (f(1-\varphi_{\eps}))(x)=v_1(x)+v_2(x).
$$
Since, $\partial_xG\in L^1(\R)$ it is easy to see that  assuming  \eqref{pre2} (resp. \eqref{pre2a}), $v_1$ satisfies \eqref{pre3} (resp. \eqref{pre3a}).

 By observing that $v_2\in C^{\infty}(a+\eps,b-\eps)$ one obtains the desired result.
\end{proof}
It is clear that by using Young's inequality  the result in Lemma \ref{pre2} part (a) extends to the case where $a=-\infty\,$ or $\,b=\infty$.

The result in Lemma \ref{pro2} extends to fractional values of $j$ in \eqref{pre2} and \eqref{pre3}. However, to simplify the exposition we restrict ourselves to $j\in \Z^+$.

\vspace{.5mm}

\section{The Initial Value Problem}

Here we will establish the local well-posedness for the IVP associated to the CH equation, that is,
\begin{equation}\label{CHb}
\begin{cases}
\partial_tu+\k \partial_xu +3 u \partial_xu-\partial_t \partial_x^2 u=2\partial_xu \partial_x^2u+ u\partial_x^3u, \hskip5pt \;t,\,x,\,\kappa\in\R,\\
u(x,0)=u_0(x).
\end{cases}
\end{equation}
The proof of Theorem \ref{thm1} will be given in several stages.

We first prove some estimates for the nonlinear terms appearing in the equation in \eqref{CHb}. 
For this aim we will use the notation and
 estimates from the previous section. 
To simplify the presentation, define the
nonlinear functions
\begin{align*}
&M(z,w)=2\kappa z+z^2+\frac12w^2,\quad \kappa \in\rr,\\
&N(z,w)=M(z,w)-w^2.
\end{align*}
Recall the definition of $\oo$ in Definition \ref{tdef}.
Given
\[
\xzw\in\oxy,
\]
we shall consider the
deformations of the form
\[
x(s)=s+\xi(s),
\]
and we define the nonlinear mappings
\begin{align*}
&F_1\xzw(s)=-\intr G'(x(s)-x(\sigma))M(z,w)(\sigma)x'(\sigma)d\sigma\\
&F_2\xzw(s)=-\intr G(x(s)-x(\sigma))M(z,w)(\sigma)x'(\sigma)d\sigma+N(z,w)(s).
\end{align*}

\subsection{Estimates of nonlinear mappings}

\begin{lemma}
\label{mnest}
If $(z,w)\in\ Y\times Y$, then
\begin{align*}
&\norm{M(z,w)}{L^\infty}+\norm{N(z,w)}{L^\infty}\lesssim \norm{z}{L^\infty}+\norm{z}{L^\infty}^2+\norm{w}{L^\infty}^2,\\
&\norm{M(z,w)}{L^2}+\norm{N(z,w)}{L^2}\lesssim \norm{z}{L^2}+\norm{z}{L^2}\norm{z}{L^\infty}+\norm{w}{L^2}\norm{w}{L^\infty},\\
\intertext{and}
&\norm{M(z,w)}{Y}+\norm{N(z,w)}{Y}\lesssim \norm{z}{Y}+\norm{z}{Y}^2+\norm{w}{Y}^2.
\end{align*}
\end{lemma}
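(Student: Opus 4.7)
The plan is purely computational: since $M(z,w)-N(z,w)=w^2$, the bounds for $N$ follow from those for $M$ (and vice versa) at the cost of an absolute constant, so I would treat both simultaneously by applying the triangle inequality to the three summands $2\kappa z$, $z^2$, and $\tfrac12 w^2$ (the sign of the last summand being immaterial). The implicit constants in the three claimed inequalities will absorb the fixed parameter $\kappa$.

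For the $L^\infty$ estimate the triangle inequality gives directly
\[
\|M(z,w)\|_{L^\infty}\le 2|\kappa|\,\|z\|_{L^\infty}+\|z\|_{L^\infty}^2+\tfrac12\|w\|_{L^\infty}^2,
\]
which yields the first line. For the $L^2$ estimate, triangle inequality reduces matters to bounding $\|z^2\|_{L^2}$ and $\|w^2\|_{L^2}$; the pointwise bound $|z(x)|^4\le\|z\|_{L^\infty}^2\,|z(x)|^2$ integrated over $\rr$ gives
\[
\|z^2\|_{L^2}=\Bigl(\intr z(x)^4\,dx\Bigr)^{1/2}\le\|z\|_{L^\infty}\|z\|_{L^2},
\]
and likewise for $w$. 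Combined with the linear term $2|\kappa|\|z\|_{L^2}$, this yields the second line.

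Finally, since by definition $\|f\|_Y=\|f\|_{L^2}+\|f\|_{L^\infty}$, adding the first two inequalities and using the elementary bounds $\|z\|_{L^2}\le\|z\|_Y$, $\|z\|_{L^\infty}\le\|z\|_Y$, and hence $\|z\|_{L^2}\|z\|_{L^\infty}\le\|z\|_Y^2$ (similarly for $w$) gives the third line. There is no real obstacle here; the lemma is a routine bookkeeping exercise recording the algebraic structure of $M$ and $N$ that will later be combined with the convolution estimates of Lemmas \ref{conv}--\ref{glip} to drive the contraction argument in the Lagrangian formulation.
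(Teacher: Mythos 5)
Your proof is correct and is exactly the computation the paper has in mind; the paper simply states that the lemma ``follows directly from the definitions of $M$, $N$, and $Y$'' and omits the details you supply (triangle inequality on the three summands, the pointwise bound $\|f^2\|_{L^2}\le\|f\|_{L^\infty}\|f\|_{L^2}$, and absorption of $\kappa$ into the implicit constant).
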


\begin{proof}
This follows directly from the definitions of $M$, $N$, and $Y$.
\end{proof}

\begin{lemma}
\label{mnlip}
If $(z_j,w_j)\in Y\times Y$, $j=1,2$, then
\begin{multline*}
\norm{M(z_1,w_1)-M(z_2,w_2)}{Y}+\norm{N(z_1,w_1)-N(z_2,w_2)}{Y}\\
\lesssim
(1+\norm{z_1}{Y}+\norm{z_2}{Y})\norm{z_1-z_2}{Y}\\
+(\norm{w_1}{Y}+\norm{w_2}{Y})\norm{w_1-w_2}{Y}.
\end{multline*}
\end{lemma}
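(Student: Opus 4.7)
The proof is essentially algebraic. My plan is to expand the differences explicitly, factor the quadratic terms, and then use the fact that $Y=L^2\cap L^\infty$ is a Banach algebra in the appropriate sense.

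First I would write
\[
M(z_1,w_1)-M(z_2,w_2)=2\kappa(z_1-z_2)+(z_1^2-z_2^2)+\tfrac12(w_1^2-w_2^2),
\]
and factor
\[
z_1^2-z_2^2=(z_1+z_2)(z_1-z_2),\qquad w_1^2-w_2^2=(w_1+w_2)(w_1-w_2).
\]
The same expression holds for $N(z_1,w_1)-N(z_2,w_2)$ up to a sign in the $w^2$ term, so both differences are controlled by the same combination.

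Next I would invoke the basic product estimates in $Y$: for $f,g\in Y$,
\[
\|fg\|_{L^\infty}\le\|f\|_{L^\infty}\|g\|_{L^\infty},\qquad \|fg\|_{L^2}\le\|f\|_{L^\infty}\|g\|_{L^2},
\]
hence $\|fg\|_Y\lesssim\|f\|_Y\|g\|_Y$. Applying this to the two factored terms gives
\[
\|z_1^2-z_2^2\|_Y\lesssim(\|z_1\|_Y+\|z_2\|_Y)\|z_1-z_2\|_Y,
\]
and likewise
\[
\|w_1^2-w_2^2\|_Y\lesssim(\|w_1\|_Y+\|w_2\|_Y)\|w_1-w_2\|_Y.
\]
The linear term contributes $\|2\kappa(z_1-z_2)\|_Y\lesssim\|z_1-z_2\|_Y$, which accounts for the constant $1$ in the prefactor $(1+\|z_1\|_Y+\|z_2\|_Y)$.

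Summing the three pieces yields the claimed estimate for both $M$ and $N$. There is no real obstacle here; the only thing to be careful about is absorbing the dependence on $\kappa$ into the implicit constant so that the factor of $\|z_1-z_2\|_Y$ carries the prefactor $(1+\|z_1\|_Y+\|z_2\|_Y)$ exactly as stated.
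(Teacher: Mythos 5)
Your proof is correct and follows the same elementary route the paper takes (the paper simply says "Similar to Lemma \ref{mnest}", i.e.\ expand, factor the differences of squares, and use the product estimates in $Y=L^2\cap L^\infty$). Nothing to add.
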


\begin{proof}
Similar to Lemma \ref{mnest}.
\end{proof}

\begin{lemma}
\label{fmap}
If $\xzw\in \oxy$, then
\begin{multline*}
|F_1\xzw(s)|+|F_2\xzw(s)|\\
\lesssim\intr G(\rho(s-\sigma))|M(z,w)(\sigma)|d\sigma\;\norm{x'}{L^\infty}
+|N(z,w)(s)|,
\end{multline*}
\begin{multline*}
\norm{F_1\xzw}{L^\infty}+\norm{F_2\xzw}{L^\infty}\\
\lesssim\rho^{-1}\norm{G}{L^1}\norm{M(z,w)}{L^\infty}\norm{x'}{L^\infty}+\norm{N(z,w)}{L^\infty},
\end{multline*}
and
\begin{multline*}
\norm{F_1\xzw}{L^2}+\norm{F_2\xzw}{L^2}\\
\lesssim\rho^{-1/2}\norm{G}{L^2}\norm{M(z,w)}{L^2}\norm{x'}{L^\infty}+\norm{N(z,w)}{L^2}.
\end{multline*}
Moreover, $F_j:\oxy\to Y$, $j=1,2$.
\end{lemma}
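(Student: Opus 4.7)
The plan is to prove the three estimates in order and then read off the mapping property. The key tool throughout is Lemma~\ref{conv}, which supplies the kernel dominations
\[
|G'(x(s)-x(\sigma))| = G(x(s)-x(\sigma)) \le G(\rho(s-\sigma)).
\]

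For the pointwise estimate, I would place absolute values under the integrals in the definitions of $F_1$ and $F_2$, apply Lemma~\ref{conv} to replace the composite kernels by $G(\rho(s-\sigma))$, and use $0 < x'(\sigma) \le \|x'\|_{L^\infty}$ from Lemma~\ref{homeo} to pull that factor outside. The additive $|N(z,w)(s)|$ on the right-hand side of the bound for $|F_2(s)|$ is inherited directly from the additive $N(z,w)$ in the definition of $F_2$.

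The $L^\infty$ and $L^2$ bounds then follow by recognizing the integral in the pointwise estimate as the convolution $(G(\rho\cdot)*|M(z,w)|)(s)$ and applying appropriate forms of Young's convolution inequality, together with the rescaling $\|G(\rho\cdot)\|_{L^p} = \rho^{-1/p}\|G\|_{L^p}$, which is immediate from a change of variables. For the $L^\infty$ bound one uses $\|f*g\|_{L^\infty} \le \|f\|_{L^1}\|g\|_{L^\infty}$ with $p=1$; for the $L^2$ bound one applies Cauchy--Schwarz pointwise (writing $G(\rho(s-\sigma))$ as a product of two square roots against $M$), producing the $\|G\|_{L^2}$ factor with the $\rho^{-1/2}$ scaling. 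The $\|N(z,w)\|_{L^2}$ contribution to $\|F_2\|_{L^2}$ is again inherited from the definition.

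Finally, to establish $F_j:\oxy\to Y$, I would observe that for $(z,w)\in X\times Y$ Lemma~\ref{mnest} gives $M(z,w), N(z,w)\in Y$, while $\xi\in\oo\subset X$ implies $\|x'\|_{L^\infty} = \|1+\xi'\|_{L^\infty}$ is finite. The already-proved $L^\infty$ and $L^2$ estimates therefore yield $F_j\xzw\in L^\infty\cap L^2 = Y$.

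I do not anticipate any real obstacle: the argument is essentially a sequence of direct applications of the lemmas of Section~2, with Lemma~\ref{conv} providing the kernel control, Lemma~\ref{homeo} controlling $x'$, and Lemma~\ref{mnest} giving the membership $M(z,w),N(z,w)\in Y$. The only mild bookkeeping point is tracking which form of Young's/Cauchy--Schwarz produces the specific $\|G\|_{L^2}$ factor in the $L^2$ bound rather than the $\|G\|_{L^1}$ factor that would appear from a direct Young's convolution estimate.
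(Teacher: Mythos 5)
Your overall route coincides with the paper's (whose proof is essentially three lines): the pointwise bound comes from the definitions of $F_1,F_2$ together with Lemma \ref{conv} and the bound $0<x'\le\|x'\|_{L^\infty}$ from Lemma \ref{homeo}; the norm bounds follow by applying Young's inequality to the convolution $G(\rho\,\cdot)*|M(z,w)|$; and the mapping property $F_j:\oxy\to Y$ follows from these together with Lemma \ref{mnest}. Your treatment of the first display, the $L^\infty$ display, and the final statement is correct.

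The one step that does not work as you describe it is the $L^2$ display. Applying Cauchy--Schwarz pointwise, $\int G(\rho(s-\sigma))|M(\sigma)|\,d\sigma\le\|G(\rho\,\cdot)\|_{L^2}\|M\|_{L^2}=\rho^{-1/2}\|G\|_{L^2}\|M\|_{L^2}$, does produce the advertised factor, but it only bounds the $L^\infty$ norm of the convolution; a uniform pointwise bound by a constant does not place $F_1(\xi,z,w)$ in $L^2(\rr)$. To control the $L^2$ norm you must use the Young inequality $\|f*g\|_{L^2}\le\|f\|_{L^1}\|g\|_{L^2}$, which yields the constant $\rho^{-1}\|G\|_{L^1}$ in place of $\rho^{-1/2}\|G\|_{L^2}$. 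You should not contort the argument to recover the printed constant: the $L^2\to L^2$ operator norm of convolution with $G(\rho\,\cdot)$ equals $\widehat{G(\rho\,\cdot)}(0)=\|G(\rho\,\cdot)\|_{L^1}=\rho^{-1}\|G\|_{L^1}$, which for $\rho\ll1$ strictly exceeds $\rho^{-1/2}\|G\|_{L^2}$, so the constant in the third display of the lemma appears to be a slip (it is the correct constant for the $L^\infty$ bound of $G(\rho\,\cdot)*|M|$ when one only assumes $M\in L^2$). With the $L^1*L^2\to L^2$ form of Young's inequality the display holds with the corrected constant, and everything downstream --- in particular $F_j(\xi,z,w)\in L^2\cap L^\infty=Y$ and the local Lipschitz property of $\fff$ --- is unaffected, since only finiteness of the constant for fixed $\rho$ is ever used.
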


\begin{proof}
The first inequality follows from the definitions of $F_j$, $j=1,2$, and Lemma \ref{conv}.

The other two inequalities follow from the first one using Young's inequality.

The final statement follows from these and Lemma \ref{mnest}.
\end{proof}

\begin{lemma}
\label{harddiff}
Let $\xzw\in\oxy$.  Then
\[
\od{}{s}F_1\xzw(s)=\paren{F_2\xzw(s)+w^2(s)}x'(s), \text {$\alev$ and in $\dd'$}.
\]
The map
\[
\xzw\mapsto F_1\xzw
\]
takes $\oxy$ into $X$.
\end{lemma}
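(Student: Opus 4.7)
The plan is to derive an explicit a.e.\ formula for $\frac{d}{ds}F_1\xzw(s)$, promote it to a $\dd'$ identity, and then read off the mapping property via Lemma~\ref{pwdiff}.

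First I would use Lemma~\ref{conv} to replace $G'$ by $-G\sgn$ and split the integral defining $F_1$ at $\sigma=s$, writing $F_1\xzw(s) = I_1(s) - I_2(s)$, with $I_1$ the integral over $\sigma<s$ and $I_2$ the one over $\sigma>s$. Applying Lemma~\ref{cov} to each piece puts them in product form,
$$I_1(s) = \tfrac12\, e^{-x(s)}\Phi_1(x(s)), \qquad I_2(s) = \tfrac12\, e^{x(s)}\Phi_2(x(s)),$$
where $\Phi_1(y) = \int_{-\infty}^y e^{\tau}M(z,w)(s(\tau))\,d\tau$ and $\Phi_2(y) = \int_y^{\infty} e^{-\tau}M(z,w)(s(\tau))\,d\tau$. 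Since $M(z,w)\circ s \in L^\infty$ (by Lemma~\ref{mnest} and Corollary~\ref{ellpee}), both $\Phi_j$ are absolutely continuous on $\rr$, with $\Phi_1'(y) = e^y M(z,w)(s(y))$ and $\Phi_2'(y) = -e^{-y}M(z,w)(s(y))$ a.e.

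Because $x(s) = s + \xi(s)$ is Lipschitz and strictly increasing (Lemma~\ref{homeo}), the compositions $\Phi_j\circ x$ are absolutely continuous and admit an a.e.\ chain rule. Differentiating each product and using $s(x(s))=s$ gives $\tfrac{d}{ds}I_1 = -x' I_1 + \tfrac12 M(z,w)\, x'$ and $\tfrac{d}{ds}I_2 = x' I_2 - \tfrac12 M(z,w)\, x'$ a.e.\ in $s$. Subtracting, and combining the identity $I_1 + I_2 = -F_2\xzw + N(z,w)$ (direct from the definition of $F_2$) with $M(z,w) - N(z,w) = w^2$, the a.e.\ derivative collapses to $\frac{d}{ds}F_1\xzw(s) = x'(s)\bigl[F_2\xzw(s) + w^2(s)\bigr]$. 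Since $I_1, I_2$ are continuous in $s$ (as products of continuous functions of $s\mapsto x(s)$) and their pointwise derivative is bounded, this a.e.\ identity is automatically the $\dd'$ identity.

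For the mapping statement, Lemma~\ref{fmap} already gives $F_1\xzw \in Y$, and the derivative formula places $F_1'$ in $Y$ as well: $x' \in L^\infty$ since $\xi \in X$, $F_2\xzw \in Y$ again by Lemma~\ref{fmap}, and $w \in Y$ forces $w^2 \in Y$. An application of Lemma~\ref{pwdiff} then yields $F_1\xzw \in X$. I expect the main obstacle to be the rigorous chain-rule step: one must verify that $\Phi_j\circ x$ is absolutely continuous even though $x$ is only Lipschitz and $\Phi_j$ only a.c., and that the formal product-rule differentiation genuinely holds at a.e.\ $s$. The strict monotonicity of $x$ from Lemma~\ref{homeo}, combined with the Lebesgue differentiation argument used in the proof of Lemma~\ref{cov}, should supply exactly what is needed.
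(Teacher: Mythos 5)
Your proof is correct and follows essentially the same route as the paper: split $F_1$ at $\sigma=s$, factor each piece as $e^{\mp x(s)}$ times an absolutely continuous integral, differentiate by the product/chain rule and Lebesgue differentiation, and recombine using $M-N=w^2$ together with $I_1+I_2=N-F_2$; the paper's $F_{11},F_{12}$ are exactly your $I_1,-I_2$, and it too closes with Lemma \ref{pwdiff}. One phrasing to tighten: ``continuous with bounded pointwise derivative'' does not by itself promote an a.e.\ derivative to a distributional one (the Cantor function is the standard counterexample) --- the correct justification, which you already have in hand, is that $I_1,I_2$ are Lipschitz, so Lemma \ref{pwdiff} applies, just as you invoke it for the mapping statement.
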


\begin{proof}
By definition of $F_1$ and Lemma \ref{conv}, we can write
\[
F_1\xzw (s)=F_{11}(s)+F_{12}(s),
\]
where
\begin{align*}
&F_{11}(s)=\frac12\int_{-\infty}^s\exp(-x(s)+x(\sigma))M(z,w)(\sigma)x'(\sigma)d\sigma,\\
&F_{12}(s)=-\frac12\int_s^{\infty}\exp(x(s)-x(\sigma))M(z,w)(\sigma)x'(\sigma)d\sigma,
\end{align*}
and  $x(s)=s+\xi(s)$, as usual.

Since
\[
\exp x(\sigma) \le \exp\sigma\exp\norm{\xi}{L^\infty}\in L^1((-\infty,s]),
\]
we may write
\[
F_{11}(s)=\frac12\exp(-x(s))\int_{-\infty}^{s}\exp x(\sigma)M(z,w)(\sigma)x'(\sigma)d\sigma.
\]

By Lemmas \ref{ftc} and \ref{mnest}, $F_{11}\in Y\cap\lip$.
By  the chain rule, we have
\[
\od{}{s}\exp(-x(s))=-\exp(-x(s))x'(s),\alev,
\]
and by the Lebesgue Differentiation Theorem, we have
\begin{align*}
\od{}{s}\int_{-\infty}^{s}\exp x(\sigma)&M(z,w)(\sigma)x'(\sigma)d\sigma\\
&=\lim_{a\to0}\frac1a\int_{s}^{s+a}\exp x(\sigma)M(z,w)(\sigma)x'(\sigma)d\sigma\\
&=\exp x(s)M(z,w)(s)x'(s),\alev
\end{align*}
Thus, by the product rule, we get
\[
F'_{11}(s)
=( -F_{11}(s)
+(1/2)M(z,w)(s))x'(s),\alev
\]
Similarly, we have $F_{12}\in Y\cap\lip$, and
\[
F'_{12}(s)
=(F_{12}(s)
+(1/2)M(z,w)(s))x'(s),\alev
\]
Therefore, $F_1\in Y\cap\lip$, and
\begin{align*}
\od{}{s}F_1\xzw(s)
&=(-F_{11}(s)+F_{12}(s)+M(z,w)(s))x'(s)\\
&=(F_2\xzw(s)+w^2(s))x'(s),\alev
\end{align*}
Since $F_{1}'\in Y$, by Lemma \ref{mnest}, we obtain from Lemma \ref{pwdiff} that $F_1'$
is the derivative in the distributional sense and $F_{1}\in X$.
\end{proof}

\begin{lemma}
\label{effdiff}
Let $\xi_j\in \oo$,   and set $x_j(s)=s+\xi_j(s)$, $j=1,2$.  Then for $k=1,2$, we have
\begin{align*}
|F_k(\xi_1,&z,w)(s)-F_k(\xi_2,z,w)(s)|\\
\lesssim&\; |\xi_1(s)-\xi_2(s)|\intr G(s-\sigma)\;|M(z,w)(\sigma)|\;x'_1(\sigma)d\sigma\\
&+\intr G(\rho(s-\sigma))\;|\xi_1(\sigma)-\xi_2(\sigma)|\;|M(z,w)(\sigma)|\;x'_1(\sigma)d\sigma\\
&+\intr G(\rho(s-\sigma))\;|M(z,w)(\sigma)|\;|x'_1(\sigma)-x'_2(\sigma)|\;d\sigma,
\end{align*}
and
\begin{align*}
&\left| \od{}{s}[F_1(x_1,z,w)(s)-F_1(x_2,z,w)(s)]\right|\\
&\phantom{\pd{}{s}[F_1(x_1,z,w)(s)}\lesssim |F_2(\xi_1,z,w)(s)-F_2(\xi_2,z,w)(s)|\;x_1'(s)\\
&\phantom{\pd{}{s}[F_1(x_1,z,w)(s)}\qquad+(|F_2(\xi_2,z,w)(s)|+w^2(s))\;|\xi'_1(s)-\xi'_2(s)|.
\end{align*}

\end{lemma}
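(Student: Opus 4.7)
The plan is to reduce both estimates to the basic pointwise inequalities already proved, Lemmas \ref{conv}, \ref{glip}, and \ref{harddiff}, using only algebraic splittings of products. For the first estimate, write the difference $F_k(\xi_1,z,w)(s)-F_k(\xi_2,z,w)(s)$ out from the definition; for $k=2$ the $N(z,w)(s)$ contributions cancel, leaving only the integral part, while for $k=1$ the difference has the same form with $G$ replaced by $G'$. In both cases the integrand has the shape
\[
K(x_1(s)-x_1(\sigma))\,x_1'(\sigma)-K(x_2(s)-x_2(\sigma))\,x_2'(\sigma),
\]
with $K=G$ or $K=G'$. I would split this as
\[
[K(x_1(s)-x_1(\sigma))-K(x_2(s)-x_2(\sigma))]\,x_1'(\sigma)+K(x_2(s)-x_2(\sigma))\,[x_1'(\sigma)-x_2'(\sigma)].
\]

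Next, I would apply Lemma \ref{glip} (which treats $G$ and $G'$ on the same footing) to the bracketed $K$-difference. This bounds it by
\[
G(\rho(s-\sigma))\,(|\xi_1(s)-\xi_2(s)|+|\xi_1(\sigma)-\xi_2(\sigma)|).
\]
Splitting on the two summands inside the parentheses, the first summand is independent of $\sigma$ and therefore factors out of the integral, producing the first term in the claimed bound (after using $G(\rho(s-\sigma))\le C\,G(s-\sigma)/\rho$ or simply keeping the $G(\rho\cdot)$ form with a harmless constant adjustment). The second summand produces the second term in the bound directly. Finally, the residual product $K(x_2(s)-x_2(\sigma))[x_1'(\sigma)-x_2'(\sigma)]$ is bounded with Lemma \ref{conv} ($|G'|=G$ off the diagonal, and $G(x_2(s)-x_2(\sigma))\le G(\rho(s-\sigma))$), yielding the third term.

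For the derivative estimate, I would avoid differentiating the integrals directly and instead use Lemma \ref{harddiff}, which gives the identity
\[
\frac{d}{ds}F_1(\xi_j,z,w)(s)=\bigl(F_2(\xi_j,z,w)(s)+w^2(s)\bigr)\,x_j'(s),\quad \text{a.e.\ and in }\dd',
\]
for $j=1,2$. Subtracting and inserting and subtracting $F_2(\xi_2,z,w)(s)\,x_1'(s)+w^2(s)\,x_1'(s)$ yields the decomposition
\[
\bigl[F_2(\xi_1,z,w)-F_2(\xi_2,z,w)\bigr](s)\,x_1'(s)+\bigl(F_2(\xi_2,z,w)(s)+w^2(s)\bigr)\bigl(x_1'(s)-x_2'(s)\bigr),
\]
which is exactly the asserted bound (since $x_j'=1+\xi_j'$ so $x_1'-x_2'=\xi_1'-\xi_2'$ a.e.). No delicate estimate is needed here beyond this algebraic splitting.

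The only mildly subtle point is making sure the first term in the first bound is justified with the correct kernel: the natural output of Lemma \ref{glip} is $G(\rho(\cdot))$ rather than $G(\cdot)$, but since $\rho\in(0,1)$ this only costs an implicit constant, and I would simply absorb it into the $\lesssim$. Aside from that minor bookkeeping, the proof is a purely algebraic manipulation — there is no real analytic obstacle, the heavy lifting having been done already in Lemmas \ref{conv}, \ref{glip}, and \ref{harddiff}.
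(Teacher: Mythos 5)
Your argument is correct in substance and follows exactly the paper's route: the paper's proof of this lemma is a one-line citation of Lemmas \ref{glip} and \ref{harddiff}, and you have simply supplied the algebraic splittings (kernel difference plus $x_1'-x_2'$ residual for the integrals; add-and-subtract on the identity of Lemma \ref{harddiff} for the derivative) that it leaves implicit. One small correction: the inequality $G(\rho y)\le C\,G(y)/\rho$ you offer as your first option is false for $\rho\in(0,1)$, since $G(\rho y)=\tfrac12 e^{-\rho|y|}\ge G(y)$ and the ratio grows exponentially in $|y|$; your fallback of simply keeping the kernel $G(\rho(s-\sigma))$ in the first term is the right move, and indeed the $G(s-\sigma)$ in the first term of the lemma as stated should presumably read $G(\rho(s-\sigma))$ --- a harmless discrepancy for the only use of the lemma, the local Lipschitz property in Theorem \ref{effloclip}, since both kernels lie in $L^1\cap L^\infty$.
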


\begin{proof}
This follows by Lemmas \ref{glip} and \ref{harddiff}.
\end{proof}

%
%
%
%
%
%

\begin{definition}
Define the space 
\begin{equation*}
\xx=\{\vv=
(\xi,z,w)
\in X\times X\times Y\},
\end{equation*}
with the norm
\[
\norm{\vv}{\xx}=\norm{\xi}{X}+\norm{z}{X}+\norm{w}{Y}.
\]
\end{definition}

\begin{theorem}\label{effloclip}
Define the mapping
\[
\fff\xzw=
(z,  F_1\xzw,  F_2\xzw).
\]
Then $\fff:\oxy\to \xx$ is locally Lipschitz.
\end{theorem}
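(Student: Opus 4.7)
The plan is to fix $B>0$ and work on the bounded set $\neigh_B=\{\vv\in\oxy:\norm{\vv}{\xx}\le B\}$, where the parameter $\rho$ in the definition of $\oo$ is uniform, and derive a Lipschitz constant $C(\rho,B,\kappa)$. The first component of $\fff$, namely $(\xi,z,w)\mapsto z$, is linear with operator norm $1$ from $\xx$ to $X$, so it is trivially globally Lipschitz. For the other two components I would use the standard decomposition
\[
F_k(\xi_1,z_1,w_1)-F_k(\xi_2,z_2,w_2)=\bigl[F_k(\xi_1,z_1,w_1)-F_k(\xi_2,z_1,w_1)\bigr]+\bigl[F_k(\xi_2,z_1,w_1)-F_k(\xi_2,z_2,w_2)\bigr],
\]
treating the $\xi$-variation and the $(z,w)$-variation separately. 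Then I would sum the resulting bounds to conclude $\norm{\fff(\vv_1)-\fff(\vv_2)}{\xx}\le C(\rho,B,\kappa)\norm{\vv_1-\vv_2}{\xx}$.

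For the $\xi$-variation piece ($k=1,2$), Lemma \ref{effdiff} yields a pointwise bound in terms of three convolutions with $G(\rho(s-\sigma))$ against $|\xi_1-\xi_2|$, $|\xi_1'-\xi_2'|$, and $M(z_1,w_1)$. Applying Young's and H\"older's inequalities exactly as in the proof of Lemma \ref{fmap}, together with the uniform bounds $\norm{M(z_1,w_1)}{Y}\le C(B,\kappa)$ from Lemma \ref{mnest} and $\norm{x_1'}{L^\infty}\le 1+B$, gives
\[
\norm{F_k(\xi_1,z_1,w_1)-F_k(\xi_2,z_1,w_1)}{Y}\le C(\rho,B,\kappa)\norm{\xi_1-\xi_2}{X}.
\]

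For the $(z,w)$-variation piece, the integrand is linear in $M$, so
\[
F_k(\xi_2,z_1,w_1)-F_k(\xi_2,z_2,w_2)=-\int_{\rr} G^{(k-1)}(x_2(s)-x_2(\sigma))\,\Delta M(\sigma)\,x_2'(\sigma)\,d\sigma+\delta_{k,2}\,\Delta N(s),
\]
where $\Delta M=M(z_1,w_1)-M(z_2,w_2)$ and likewise for $\Delta N$. Invoking Lemma \ref{fmap} with $M,N$ replaced by their differences and applying Lemma \ref{mnlip} to bound $\norm{\Delta M}{Y}+\norm{\Delta N}{Y}$ by $C(B)(\norm{z_1-z_2}{X}+\norm{w_1-w_2}{Y})$ yields the corresponding $Y$-Lipschitz estimate for this piece.

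What remains is the $H^1$-part of the $F_1$ estimate, which is the main technical point. By Lemma \ref{harddiff}, $\partial_sF_1=(F_2+w^2)x'$ a.e.\ and in $\dd'$, so the derivative of the $\xi$-variation difference is controlled pointwise by the second inequality of Lemma \ref{effdiff}; its $Y$-norm is then handled by combining the $Y$-bound for $F_2$ differences just obtained with the uniform $L^\infty$ bounds $\norm{F_2(\xi_2,z,w)}{L^\infty}+\norm{w^2}{L^\infty}\le C(\rho,B,\kappa)$, paired against the factor $|\xi_1'-\xi_2'|$ in $L^2$ and $L^\infty$. The derivative of the $(z,w)$-variation piece (with $\xi_2$ fixed) factors cleanly as
\[
x_2'(s)\bigl[(F_2(\xi_2,z_1,w_1)-F_2(\xi_2,z_2,w_2))+(w_1+w_2)(w_1-w_2)\bigr],
\]
whose $Y$-norm is controlled by the $(z,w)$-variation estimate above together with the elementary bound $\norm{(w_1+w_2)(w_1-w_2)}{Y}\le (\norm{w_1}{Y}+\norm{w_2}{Y})\norm{w_1-w_2}{Y}$. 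The principal obstacle throughout is bookkeeping: each quadratic or convolution factor must be split so that exactly one factor carries the difference (in whichever of $L^2$ or $L^\infty$ is needed) while the remaining factors are absorbed into $C(\rho,B,\kappa)$ via Lemmas \ref{mnest}, \ref{mnlip}, \ref{fmap}, and \ref{effdiff}.
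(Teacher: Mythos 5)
Your proposal is correct and follows essentially the same route as the paper, whose proof simply invokes Lemmas \ref{mnlip} and \ref{effdiff}: you split each $F_k$ difference into a $\xi$-variation (controlled by Lemma \ref{effdiff} and Young's inequality as in Lemma \ref{fmap}) and a $(z,w)$-variation (controlled by Lemma \ref{mnlip}), and handle the derivative of $F_1$ via the identity of Lemma \ref{harddiff}. The only difference is that you write out the bookkeeping the paper leaves implicit.
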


\begin{proof} 
Lemmas \ref{mnlip} and \ref{effdiff} yield the result.
\end{proof}

\subsection{Local existence} 

We first construct a solution in Lagrangian coordinates.

\begin{theorem}\label{localexistence}
Given $B>0$, define
\[
\neigh(\rho,B)=\{\vv_0=(\xi_0,z_0,w_0)\in\xx: \xi_0\in\oo,\;\norm{\vv_0}{\xx}<B\}.
\]
There exists a time $T>0$,
depending only upon $\rho$, $B$,  and $B-\norm{\vv_0}{\xx}$, such that the system
\[
\vv(\cdot,t)=\vv_0(\cdot)+\int_0^t\fff(\vv(\cdot,\tau))d\tau,
\]
has a unique solution $\vv\in C^1(\tint : \neigh(\rho, B))$.

If $z'_0(s)=w_0(s)(1+\xi_0'(s))$ a.e.,  then 
\[
\vv(\cdot,t)=(
\xi(\cdot,t), z(\cdot,t), w(\cdot,t)
)
\]
satisfies
\begin{equation}\label{wdxu}
\pds{s}{z}(\cdot,t)=w(\cdot,t)\paren{1+\pds{s}{\xi}(\cdot,t)},\quad \text{in $C(\tint : Y)$.}
\end{equation}
\end{theorem}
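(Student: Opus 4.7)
\emph{Existence and uniqueness.} The strategy is standard Picard iteration applied to the integral equation
\[
\Phi(\vv)(t)=\vv_0+\int_0^t\fff(\vv(\tau))\,d\tau,
\]
leveraging the local Lipschitz property of $\fff:\oxy\to\xx$ from Theorem \ref{effloclip}. First I would fix a closed ball $K\subset\neigh(\rho,B)$ centered at $\vv_0$ of radius $\delta>0$ chosen small enough that every $\vv=(\xi,z,w)\in K$ satisfies both $\|\vv\|_\xx<B$ and $1+\essinf\xi'>\rho$, so that $\fff$ is well defined on $K$ with some bound $M$ and Lipschitz constant $L$.

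Choosing $T$ so that $TM\le\delta$ and $TL<1/2$, the first inequality makes $\Phi$ map $C(\tint:K)$ into itself, while the second makes $\Phi$ a strict contraction. The Banach fixed-point theorem then yields a unique $\vv\in C(\tint:K)$, and continuity of $t\mapsto\fff(\vv(t))$ into $\xx$ upgrades this via the integral equation to $\vv\in C^1(\tint:\xx)$, with $\vv(t)\in\neigh(\rho,B)$ for all $t\in\tint$.

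\emph{Compatibility relation.} Differentiating the components of the integral equation in $t$ gives $\partial_t\xi=z$, $\partial_tz=F_1(\vv)$, and $\partial_tw=F_2(\vv)$. Define
\[
q(s,t)=\partial_sz(s,t)-w(s,t)\bigl(1+\partial_s\xi(s,t)\bigr)\in C(\tint:Y).
\]
Using the identity $\partial_sF_1=(F_2+w^2)(1+\partial_s\xi)$ from Lemma \ref{harddiff}, and the fact that $\partial_s$ commutes with $\partial_t$ on $z,\xi\in C^1(\tint:X)$, one computes
\begin{align*}
\partial_tq
&=\partial_sF_1-F_2(1+\partial_s\xi)-w\,\partial_sz\\
&=(F_2+w^2)(1+\partial_s\xi)-F_2(1+\partial_s\xi)-w\,\partial_sz\\
&=w^2(1+\partial_s\xi)-w\,\partial_sz=-wq.
\end{align*}
This is a linear ODE in $Y$ with coefficient $w\in L^\infty(\tint;Y)$, and since $q(\cdot,0)=0$ by the hypothesis relating $z_0'$, $w_0$, and $\xi_0'$, a Gr\"onwall argument forces $q\equiv0$, which is precisely \eqref{wdxu}.

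\emph{Main obstacle.} The principal subtlety is ensuring that the Picard iterates preserve the constraint $\xi(\cdot,t)\in\oo$, since $\oxy$ is open rather than closed in $\xx$: the pointwise condition $1+\essinf\xi'>\rho$ is not a norm bound and must be monitored independently through the $W^{1,\infty}$-component of $\|\xi-\xi_0\|_X$. This is what forces the careful bookkeeping in the choice of $\delta$, and hence in the existence time $T$. Once the iteration is safely contained in a closed subset of $\oxy$, the algebraic identity in Lemma \ref{harddiff} is exactly what makes the compatibility calculation collapse to the clean linear ODE $\partial_tq=-wq$.
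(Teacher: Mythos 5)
Your proposal is correct and follows essentially the same route as the paper: a contraction on a closed subset of $C(\tint:\oo\times X\times Y)$ chosen so that both the norm bound and the pointwise constraint $1+\essinf\xi'>\rho$ are preserved, followed by the observation that $J=\pds{s}z-w(1+\pds{s}\xi)$ satisfies $\pds{t}J=-wJ$ with $J(\cdot,0)=0$, whence $J\equiv0$ by Gronwall. Your explicit computation of $\pds{t}q=-wq$ via Lemma \ref{harddiff} is exactly the identity the paper invokes, just written out in more detail.
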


\begin{proof}
Since $\xi_0\in\oo$, we can find $\bar\rho$ such that
\[
\rho<\bar\rho<1+\essinf\xi'_0.
\]
Define the  set
\[
\bar \neigh=\{\vv=(\xi,z,w)\in\xx: 1+\essinf \xi'\ge\bar\rho,\;\norm{\vv-\vv_0}{\xx}\le B-\norm{\vv_0}{\xx}\}.
\]
Then  $\bar \neigh\ne\emptyset$ since  $\vv_0\in\bar \neigh$, $\bar \neigh$ is closed in $\xx$, and $\bar \neigh\subset \neigh(\rho, B)$.
  Apply the contraction mapping principle to the operator
  \[
  \mathbf S\vv(\cdot,t)=\vv_0(\cdot)+\int_0^t\fff(\vv(\cdot,\tau))d\tau,
\]
on the set $ C(\tint,\bar \neigh)$, with $T$ sufficiently small.

The last statement follows from
\[
\pds{t}{J}(s,t)=-w(s,t)J(s,t),\quad J(s,0)=0,
\]
where 
\[
J(s,t)=\pds{s}{z}(s,t)-w(s,t)\pds{s}{x}(s,t).
\]
\end{proof}

\subsection{Solution of the Camassa-Holm  equation \eqref{CHb}} 

Next, we establish the regularity of the local solution in Eulerian coordinates.

\begin{theorem}
\label{regthm}
Let $u_{0}\in X$, and define
\[
\vv_{0}=(0,u_0,u'_0)
\in\oo\times X\times Y.
\]
Choose any $B>\norm{\vv_0}{\mathcal X}$, and let 
\[
\vv=(\xi,z,w)
\in C^1(\tint :  \neigh(\rho, B))
\]
 be the corresponding solution from Theorem  \ref{localexistence}.
Let $x(s,t)=s+\xi(s,t)$ and let $s(x,t)=x+S\xi(x,t)$ be the inverse function.
 Define
 \[
 u(x,t)=z(s(x,t),t).
 \]
Then
\[
S\xi, u\in C(\tint: H^1)\cap C^1(\tint:L^2),
\]
\[
 \partial_xS\xi,\partial_xu\in L^\infty(\tint:Y),
\]
\begin{gather}
\label{eikonal}
\pds{t}s+u\pds{x}s=0,\quad \text{on}\quad \rr\times\tint,\\
\nonumber
s(x,0)=x,\quad x\in\rr,
\end{gather}
and 
\begin{gather}
\label{CH-char}
\partial _t u+u\partial_xu+G'\ast M(u,\partial_x{u})=0, \quad \text{on}\quad \rr\times\tint,\\
\nonumber
u(x,0)=u_0(x),\quad x\in \rr.
\end{gather}

\end{theorem}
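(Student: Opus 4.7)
The plan is to push the Lagrangian solution $\vv=(\xi,z,w)$ from Theorem \ref{localexistence} back to Eulerian coordinates through the reference map $s(x,t)=x+S\xi(x,t)$, and to verify each claim via the composition machinery of Section 2 combined with the change-of-variables formula in Lemma \ref{cov}. The initial conditions in \eqref{eikonal} and \eqref{CH-char} are automatic since $\xi_0\equiv 0$ forces $s(x,0)=x$, whence $u(x,0)=z_0(x)=u_0(x)$.

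The regularity of $S\xi$ and $u=z\circ s$ is packaged by the Section~2 lemmas. Lemma \ref{homeo3} gives $S\xi(\cdot,t)\in X$ with bounds uniform in $t$, and Corollary \ref{ellpee} gives $\norm{u(\cdot,t)}{L^2}+\norm{u(\cdot,t)}{L^\infty}\lesssim \norm{z(\cdot,t)}{L^2}+\norm{z(\cdot,t)}{L^\infty}$ uniformly in $t$. For the $x$-derivatives I use the crucial identity $\pds{x}u(x,t)=w(s(x,t),t)$, obtained by combining \eqref{wdxu} with $\pds{x}s=(1+\xi'\circ s)^{-1}$ from Lemma \ref{homeoinv}; a change of variables then yields $\pds{x}u\in L^\infty(\tint:Y)$, and the analogous identity $\pds{x}S\xi=-(\xi'\circ s)/(1+\xi'\circ s)$ gives $\pds{x}S\xi\in L^\infty(\tint:Y)$. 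Continuity in $t$ with values in $L^2$ of each of these four quantities follows from the splitting
\[
f(s(\cdot,t),t)-f(s(\cdot,t_0),t_0)=[f(s(\cdot,t),t)-f(s(\cdot,t),t_0)]+[f(s(\cdot,t),t_0)-f(s(\cdot,t_0),t_0)],
\]
applied with $f\in\{\xi,\xi',z,w\}$: the first bracket is controlled by Corollary \ref{ellpee} together with the continuity $\xi,z\in C(\tint:X)$, $w\in C(\tint:Y)$, and the second by Lemmas \ref{homeo4}--\ref{homeo5} together with $\xi\in C(\tint:X)$.

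For the eikonal equation, formal differentiation of $s(x(s,t),t)=s$ in $t$ yields
\[
\pds{t}s(x(s,t),t)+\pds{x}s(x(s,t),t)\,z(s,t)=0,
\]
which, since $z(s,t)=u(x(s,t),t)$ and $s\mapsto x(s,t)$ is bijective, gives \eqref{eikonal} almost everywhere. For the PDE I apply Lemma \ref{cov} to the two halves of the integral defining $F_1$ (split according to the sign of $G'$), making the substitution $y=x(\sigma)$ so that the Jacobian $x'(\sigma)$ is absorbed and $M(z,w)(\sigma)$ becomes $M(u,\pds{x}u)(y)$. This produces the Eulerian convolution formula
\[
F_1(\xi,z,w)(s,t)=-(G'*M(u,\pds{x}u))(x(s,t),t).
\]
Combined with $\pds{t}u=F_1(\xi,z,w)\circ s+\pds{s}z\circ s\cdot\pds{t}s=F_1\circ s-u\,\pds{x}u$, obtained from the ODE $\pds{t}z=F_1$, the relation \eqref{wdxu}, and the eikonal equation, this yields \eqref{CH-char}.

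The main technical obstacle is justifying rigorously that $\pds{t}u$ exists as an $L^2$-valued time derivative (not just pointwise almost everywhere) and belongs to $C(\tint:L^2)$, given that $u=z\circ s$ is a composition in which both arguments depend on $t$ and $z$ has only weak $s$-derivatives. My approach is to work directly with the $L^2$-difference quotient
\[
u(\cdot,t+h)-u(\cdot,t)=[z(s(\cdot,t+h),t+h)-z(s(\cdot,t+h),t)]+[z(s(\cdot,t+h),t)-z(s(\cdot,t),t)],
\]
evaluating the first bracket through the ODE $\pds{t}z=F_1$ as an integral in $\tau$, and the second via the fundamental theorem applied to $z$ in $s$ using \eqref{wdxu} and Lemma \ref{ftc}. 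Dividing by $h$ and invoking the $L^2$-continuity established in the second paragraph, one passes to the limit to obtain $\pds{t}u=F_1\circ s-u\,\pds{x}u$ in $C(\tint:L^2)$. The same argument, with $\pds{t}\xi=z$ in place of $\pds{t}z=F_1$, gives $\pds{t}S\xi=-u(1+\pds{x}S\xi)\in C(\tint:L^2)$, completing the $C^1(\tint:L^2)$ regularity of both $S\xi$ and $u$.
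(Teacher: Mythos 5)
Your proposal is correct and takes essentially the same route as the paper: both push the Lagrangian solution forward through the reference map, use the identity $\partial_xu=w\circ s$ from \eqref{wdxu} together with Lemmas \ref{homeo3}--\ref{homeo5}, Corollary \ref{ellpee} and Lemma \ref{cov}, and establish the $C^1(\tint:L^2)$ regularity via exactly the two-bracket splitting of the difference quotient of $z\circ s$ that the paper uses. The only (cosmetic) difference is ordering --- the paper rigorously establishes $\partial_tS\xi$ and the eikonal equation \emph{before} treating $\partial_tu$, since the limit of $(s(\cdot,t+h)-s(\cdot,t))/h$ is needed in the second bracket of your splitting for $u$; your final paragraph contains this step but should logically come first.
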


\begin{proof}
We shall prove the following statements sequentially:
\begin{gather}
\label{elltwocont}
S\xi,u\in C(\tint: L^2),\\
\label{ellinfbd}
\pds{x}S\xi,\pds{x}u\in L^\infty (\tint:Y),\\
\label{elltwodercont}
\pds{x}S\xi,\pds{x}u\in C(\tint: L^2),\\
\label{essreg}
S\xi\in C^1(\tint: L^2)\quad \text{and \eqref{eikonal} holds, }\\
\label{youreg}
u\in C^1(\tint: L^2)\quad \text{and \eqref{CH-char} holds.}
\end{gather}

Since $\vv\in C^1(\tint:\neigh(\rho, B))$, we have that
\[
\sup_{\tint}\sum_{k=0,1}(\norm{\partial_t^k\xi(\cdot,t)}{X}+\norm{\partial_t^kz(\cdot,t)}{X}+\norm{\partial_t^kw(\cdot,t)}{Y})\le B.
\]
Throughout the proof, generic constants may  depend on $B$ and $\rho$.

Let $t,t_1\in\tint$.
By Lemma \ref{ycont}, we have that
\[
\norm{S\xi(\cdot,t_1)-S\xi(\cdot,t)}{L^2}\le C \norm{\xi(\cdot,t_1)-\xi(\cdot,t)}{L^2}.
\]
This proves \eqref{elltwocont} for $S\xi$.

Using the definition of $u$ and the fact that $z\in C^1(\tint:L^2)$, we have
\begin{align*}
u(x,t_1)-u(x,t)
=&\int_t^{t_1}\partial_t z(s(x,t_1),\tau)d\tau
+z(s(x,t_1),t)-z(s(x,t),t).
\end{align*}
Using Lemma \ref{ftc}, Corollary \ref{ellpee}, and Lemma \ref{ycont}, we obtain
\begin{equation*}
\begin{split}
&\norm{u(\cdot,t_1)-u(\cdot,t)}{L^2}\\
\le & \int_t^{t_1}\norm{\partial_tz(s(\cdot,t_1),\tau)}{L^2}d\tau+\norm{\partial_sz(\cdot,t)}{L^\infty}\norm{s(\cdot,t_1)-s(\cdot,t)}{L^2}\\
\le & \int_t^{t_1}(1+\norm{\xi'(\cdot,t_1)}{L^\infty})^{1/2}\norm{\partial_tz(\cdot,\tau)}{L^2}d\tau
 + C\norm{S\xi(\cdot,t_1)-S\xi(\cdot,t)}{L^2}\\
\le &\, C(|t_1-t|+\norm{\xi(\cdot,t_1)-\xi(\cdot,t)}{L^2}).
\end{split}
\end{equation*}
So \eqref{elltwocont} now follows for $u$.

By Lemma \ref{homeo3}, we have
\[
\norm{\pds{x}S\xi(\cdot,t)}{Y}\le\norm{S\xi(\cdot,t)}{X}\le C,
\]
which proves \eqref{ellinfbd} for $\pds{x}S\xi$.

Since $z(\cdot,t), s(\cdot,t)\in W^{1,\infty}$, we  use Lemma \ref{ldt}, the chain rule, Lemma \ref{homeoinv}, and  \eqref{wdxu}
to obtain
\begin{equation}\label{wid}
\begin{split}
\partial_xu(x,t)
&=\partial_sz(s(x,t),t)(\partial_sx(s(x,t),t))^{-1}\\
&=w(s(x,t),t)\alev
\end{split}
\end{equation}
Now by Corollary \ref{ellpee}, we have
\[
\norm{\partial_x u(\cdot,t)}{Y}= \norm{w(s(\cdot,t),t)}{Y}
\le C\norm{w(\cdot,t)}{Y}\le C.
\]
This verifies \eqref{ellinfbd} for $\pds{x}u$.

Since $\xi\in C(\tint: \oo)$ and $D_xS:\oo\to L^2$ is continuous by Lemma \ref{homeo5}, we see that
$\pds{x}S\xi\in C(\tint: L^2)$.  Since $w\in C^1(\tint: L^2)$, we also obtain that
\[
\pds{x}u(x,t)=w(s(x,t),t)\in C(\tint: L^2),
\]
 exactly as was shown above for $u(x,t)=z(s(x,t),t)$.
This establishes \eqref{elltwodercont}.


Next, we prove \eqref{essreg}.

Generally speaking, given a function $f(x,t)$ on $\rr\times \tint$ and $h\ne0$, we define
\[
R_xf(x,t;h)=\frac{f(x+h,t)-f(x,t)}{h}-\partial_xf(x,t)
\]
and
\[
R_tf(x,t;h)=\frac{f(x,t+h)-f(x,t)}{h}-\partial_tf(x,t).
\]
We define $R_xf(x,t;0)=R_tf(x,t;0)=0$.

Since $\vv\in C^1(\tint: \oo\times X\times Y)$, we have in particular that 
$\xi\in C^1(\tint: Y)$ and $z\in C^1(\tint: L^2)$.
Therefore, 
\begin{equation}
\label{xistrdiff}
\norm{R_t\xi(\cdot,t;h)}{L^2}+\norm{R_t\xi(\cdot,t;h)}{L^\infty}\to0,\;\text{as}\quad h\to0,
\end{equation}
and
\[
\norm{R_tz(\cdot,t;h)}{L^2}\to0,\quad \text{as}\quad h\to0.
\]Since the derivative of an $H^1$ function exists strongly in $L^2$, we have that
\begin{equation}
\label{elltwospder}
\norm{R_sz(\cdot,t;h)}{L^2}\to0\; \text{and}\; \norm{R_xS\xi(\cdot,t;h)}{L^2}\to0,\; \text{as}\; h\to0.
\end{equation}

Having shown that $u\in C(\tint: H^1)$, we have that $u\in C(\tint: L^\infty)$.
Since we also have that $\pds{x}S\xi \in C(\tint: L^2)$, it follows that
\[
u\pds{x}s=u(1+\pds{x}S\xi)\in C(\tint: L^2).
\]
Therefore, we can prove \eqref{essreg} by showing that
\[
\lim_{t_1\to t}\norm{R_tS\xi(x,t;t_1-t)}{L^2}\to0,
\]
using
\[
\pds{t}s=-u\pds{x}s.
\]
In the sequel, we shall write $\eta=S\xi$, as before, in order to simplify the notation.

Given $x\in\rr$ and $t,t_1\in\tint$, we have
\[
s(x,t_1)=s(x_1,t), \quad\text{with}\quad x_1=x(s(x,t_1),t).
\]
Again, to simplify the notation, we also set 
\begin{equation}
\label{esssimp}
s_1=s(x,t_1)=s(x_1,t), \; s=s(x,t)
\end{equation}
so that
\[
 x_1=x(s_1,t),\; x=x(s,t).
\]
So we can write
\begin{align*}
s(x,t_1)-s(x,t)\,&= s(x_1,t)-s(x,t)\\
&= x_1-x+\eta(x_1,t)-\eta(x,t)\\
&= [(1+\pds{x}\eta(x,t))+R_x\eta(x,t;x_1-x)](x_1-x)\\
&=[\pds{x}s(x,t)+R_x\eta(x,t;x_1-x)](x_1-x).
\end{align*}
Next,  we write
\begin{align*}
x_1-x
&= x(s(x,t_1),t)-x(s(x,t_1),t_1)\\
&= x(s_1,t)-x(s_1,t_1)\\
&= \xi(s_1,t)-\xi(s_1,t_1),
\end{align*}
from which we see that
\[
|x_1-x|\le\norm{\xi(\cdot,t_1)-\xi(\cdot,t)}{L^\infty}\to0,\quad \text{as}\quad t_1\to t.
\]
By \eqref{elltwospder}, it follows that
\[
\norm{R_x\eta(x,t;x_1-x)}{L^2}\to0,\quad \text{as}\quad t_1\to t.
\]
Continuing from above, we have that
\begin{align*}
\xi(s_1,t)-&\xi(s_1,t_1)
=-(\pds{t}\xi(s_1,t)+R_t\xi(s_1,t;t_1-t))(t_1-t)\\
=&-\big(z(s,t)+z(s_1,t)-z(s,t)+R_t\xi(s_1,t;t_1-t)\big)(t_1-t).
\end{align*}
Recalling  the definitions introduced above, we have
\begin{align*}
Z(x,t;t_1-t)&\equiv \frac{x_1-x}{t_1-t}+u(x,t)=\frac{x_1-x}{t_1-t}+z(s,t)\\
&=z(s(x,t_1),t)-z(s(x,t),t)
 +R_t\xi(s_1,t;t_1-t).
\end{align*}
An easy estimation of 
\[
R_t\xi(x,t,t_1-t)=\frac1{t_1-t}\int_t^{t_1} (\pds{t}\xi(x,\tau)-\pds{t}\xi(x,t))d\tau
\]
yields
\begin{equation*}
\norm{Z(\cdot,t;t_1-t)}{L^\infty}
\le C(\norm{z(\cdot,t)}{L^\infty}+\sup_{|\tau-t|\le|t_1-t|}\norm{\pds{t}z(\cdot,\tau)}{L^\infty})\le C.
\end{equation*}

By Corollary \ref{ellpee}, we have
\begin{equation*}
\norm{Z(\cdot,t;t_1-t)}{L^2}\le \norm{z(s(\cdot,t_1),t)-z(s(\cdot,t),t)}{L^2}
+\norm{R_t\xi(\cdot,t;t_1-t)}{L^2}.
\end{equation*}
So, using Lemma \ref{homeo4} and  \eqref{xistrdiff}, we have that
\[
\lim_{t_1\to t}\norm{Z(x,t;t_1-t)}{L^2}=0.
\]

%
%
%
%
%
%

Altogether, we find that
\begin{align*}
\norm{R_ts(\cdot,t;t_1-t)}{L^2}
\le&\norm{R_x\eta(\cdot,t;t_1-t)}{L^2}\norm{u(\cdot,t)}{L^\infty}\\
&+\norm{Z(x,t;t_1-t)}{L^2}\norm{\pds{x}s(\cdot,t)}{L^\infty}\\
&+\norm{R_x\eta(\cdot,t;t_1-t)}{L^2}\norm{Z(x,t;t_1-t)}{L^\infty}\\
\le&\,C(\norm{R_x\eta(\cdot,t;t_1-t)}{L^2}+\norm{Z(x,t;t_1-t)}{L^2}).
\end{align*}
This tends to 0 as $t_1\to t$, thereby proving  the statement \eqref{essreg}.

We now turn to the verification of \eqref{youreg}.  Formally speaking, we expect that
\begin{equation}\label{dtuformula}
\begin{split}
\pds{t}u(x,t)&=\pds{t}z(s(x,t),t)+\pds{s}z(s(x,t),t)\pds{t}s(x,t).
\end{split}
\end{equation}

We first show that the expression on the right is indeed  a function in $C(\tint,L^2)$.

 Using the change of variables in Lemma \ref{cov}, we have
 \begin{equation}\label{dtz}
\begin{split}
\pds{t}z(&s(x,t),t)= \, F_1\xzw(s(x,t),t)\\
=&-\intr G'(x-y))M(z(s(y,t),t),w(s(y,t),t))dy \\
=&-\intr G'(x-y))M(u(y,t),\pds{x}u(y,t))dy\\
=&-G'\ast M(u,\pds{x}u)(x,t)\\
\equiv &-v(x,t).
\end{split}
\end{equation}
By Lemma \ref{mnlip}, the map
\[
u\mapsto M(u,\pds{x}u)
\]
is continuous from $C(\tint: X)$ to $C(\tint: L^2)$.  Convolution with $G'$ is continuous on $L^2$,
so it follows that
\[
v= G'\ast M(u,\pds{x}u)\in C(\tint: L^2).
\]

By \eqref{eikonal} and \eqref{wid},  
we have that
\begin{equation}\label{dsz}
\begin{split}
\pds{s}z(s(x,t),t)\;\pds{t}s(x,t)
&=\pds{s}z(s(x,t),t) \; (u(x,t) \pds{x}s(x,t))\\
&= u(x,t)\;\pds{x}u(x,t).
\end{split}
\end{equation}
Since $u\in C(\tint: L^\infty)$ and $\pds{x}u\in C(\tint: L^2)$, we have that
$u\pds{x}u\in C(\tint: L^2)$.  

Therefore, in order to prove \eqref{youreg}, it is enough to show that
\[
\lim_{t_1\to t}\norm{R_tu(x,t;t_1-t)}{L^2}=0,
\]
with 
$\pds{t}u=-u\pds{x}u-v$.
By \eqref{dtz} and \eqref{dsz}, this will also rigorously establish \eqref{dtuformula}.

We shall examine the expression
\begin{equation}\label{dumain}
\begin{split}
u(x,t_1)-u(x,t)=& z(s(x,t_1),t_1)-z(s(x,t),t)\\
=& z(s_1,t_1)-z(s,t)\\
=& (z(s_1,t_1)-z(s_1,t))+(z(s_1,t)-z(s,t)),
\end{split}
\end{equation}
where we have used the notation \eqref{esssimp}.

Since $z\in C^1(\tint: X)$, the first pair of terms in \eqref{dumain} has the form
\begin{equation}\label{du1}
\begin{split}
&z(s_1,t_1)-z(s_1,t)\\
&= \, \pds{t}z(s,t)(t_1-t)+\int_t^{t_1}(\pds{t}z(s_1,\tau)-\pds{t}z(s,\tau))d\tau\\
&\hskip15pt+\int_t^{t_1}(\pds{t}z(s,\tau)-\pds{t}z(s,t))d\tau\\
&=\Big\{\pds{t}z(s,t)
+\frac1{t_1-t}\int_t^{t_1}\int_s^{s_1}\pds{s}\pds{t}z(\sigma,\tau)d\sigma d\tau\\
&\hskip15pt+\frac1{t_1-t}\int_t^{t_1}(\pds{t}z(s,\tau)-\pds{t}z(s,t))d\tau\Big\}(t_1-t).
\end{split}
\end{equation}
Since by \eqref{esssimp}
\begin{align*}
s_1-s=&(\pds{t}s(x,t)+R_ts(x,t;t_1-t))(t_1-t),
\end{align*}
the second group of terms in \eqref{dumain} can be written as
\begin{equation}\label{du2}
\begin{split}
z(s_1,t)-z(s,t)=&(\pds{s}z(s,t)+R_sz(s,t;s_1-s))(s_1-s)\\
=&(\pds{s}z(s,t)+R_sz(s,t;s_1-s)\\
&\times (\pds{t}s(x,t)+R_ts(x,t;t_1-t))(t_1-t).
\end{split}
\end{equation}

Using \eqref{dtz} and \eqref{dsz}, we obtain
\begin{equation}\label{du3}
\begin{split}
R_tu(x,t;t_1-t)
=&\frac{u(x,t_1)-u(x,t)}{t_1-t}+v(x,t)+u\pds{x}u(x,t)\\
=&\frac{u(x,t_1)-u(x,t)}{t_1-t}\\
&-\pds{t}z(s(x,t),t)-\pds{s}z(s(x,t),t)\;\pds{t}z(s,t)
\end{split}
\end{equation}
We now combine \eqref{dumain}, \eqref{du1}, \eqref{du2}, \eqref{du3} to derive
\[
R_tu(x,t;t_1-t)=A_1+\ldots+A_5,
\]
where
\begin{align*}
&A_1=\frac1{t_1-t}\int_t^{t_1}\int_s^{s_1}\pds{s}\pds{t}z(\sigma,\tau)d\sigma d\tau\\
&A_2=\frac1{t_1-t}\int_t^{t_1}(\pds{t}z(s,\tau)-\pds{t}z(s,t))d\tau\\
&A_3= \pds{t}s(x,t)\;R_sz(s,t;s_1-s)\\
&A_4=\pds{s}z(s,t)\;R_ts(x,t;t_1-t)\\
&A_5= R_sz(s,t;s_1-s)\;R_ts(x,t;t_1-t).
\end{align*}
Therefore, the task of proving \eqref{youreg} reduces to showing
\begin{equation}
\label{task}
\lim_{t_1\to t}\norm{A_j}{L^2}=0, \quad j=1,\ldots,5.
\end{equation}

Since $s\in C(\tint: L^2)$,
\[
\norm{A_1}{L^2}\le C\norm{s(\cdot,t_1)-s(\cdot,t)}{L^2}\to0,
\]
as $ t_1\to t$.

By Corollary \ref{ellpee}, we have
\begin{align*}
\norm{A_2}{L^2}&\le \sup_{|\tau-t|\le|t_1-t|}\norm{\pds{t}z(s(\cdot,t),\tau)-\pds{t}z(s(\cdot,t),t)}{L^2}\\
&\le C(\rho)\sup_{|\tau-t|\le|t_1-t|}\norm{\pds{t}z(\cdot,\tau)-\pds{t}z(\cdot,t)}{L^2},
\end{align*}
which tends to 0, as $t_1\to t$, since $\pds{t}z\in C(\tint: L^2)$.

Since $u  \in C(\tint: H^1)\subset C(\tint: L^\infty)$, by the Sobolev Lemma, we have that $u$ is uniformly bounded.
By \eqref{ellinfbd}, we have that $\pds{x}s=1+\pds{x}\xi$ is uniformly bounded.  
Therefore, by
\eqref{eikonal}, we have that $\pds{t}s=-u\pds{x}s$ is uniformly bounded.
By Corollary \ref{ellpee} and \eqref{elltwospder}, we have that
\[
\norm{R_sz(s(\cdot,t),t;h)}{L^2}\le C(\rho) \norm{R_sz(\cdot,t;h)}{L^2}\to0,
\]
as $h\to0$.  Since $S\xi\in C(\tint: L^\infty)$, we have that 
\[
\norm{s_1-s}{L^\infty}=\norm{s(\cdot,t_1)-s(\cdot,t)}{L^\infty}=\norm{S\xi(\cdot,t_1)-S\xi(\cdot,t)}{L^\infty}\to0,
\]
as $t_1\to t$.  It follows that
\[
\lim_{t_1\to t}\norm{R_sz(s(\cdot,t),t;s_1-s)}{L^2}=0.
\]
Therefore, we have shown that \eqref{task} is valid for $j=3$.

Since $\pds{s}z\in C(\tint: Y)\subset C(\tint: L^\infty)$, we have that $\pds{s}z$ is uniformly bounded.
In proving \eqref{essreg}, we showed that 
\[
\lim_{t_1\to t}\norm{R_ts(x,t;t_1-t)}{L^2}=0.
\]
Thus, we obtain the desired conclusion \eqref{task} for $j=4$.

Finally, since  $\pds{t}s$ is uniformly bounded, as noted above, and since $s\in C^1(\tint: L^2)$, we have that
\begin{equation*}
\begin{split}
|R_ts(x,t;t_1-t)|&=\left|\int_t^{t_1}(\pds{t}s(x,\tau)-\pds{t}s(x,t))d\tau\right|\\
&\le C\norm{\pds{t}z}{L^\infty(\rr\times\tint)}\le C.
\end{split}
\end{equation*}
So using \eqref{elltwospder}, we have  that \eqref{task} holds for $j=5$.

This concludes the proof of \eqref{youreg} and the  theorem.

\end{proof}

Now we establish uniqueness of solutions in the Eulerian frame.
\begin{theorem}
\label{euniqueness}
If 
\[
u\in C(\tint: H^1)\cap L^\infty(\tint: W^{1,\infty})\cap C^1(\tint: L^2)
\]
solves the initial value problem \eqref{CH-char} with $u_0\in X$, then
$u$ is unique.
\end{theorem}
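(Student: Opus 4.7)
The plan is to pull the problem back to Lagrangian coordinates and invoke the uniqueness clause of Theorem \ref{localexistence}. Let $u_1,u_2$ be two Eulerian solutions sharing initial datum $u_0\in X$. For each $i=1,2$, $u_i\in L^\infty(\tint\!:\!W^{1,\infty})$ is uniformly Lipschitz in $x$, and $u_i\in C(\tint\!:\!H^1)\hookrightarrow C(\rr\times\tint)$ is jointly continuous in $(x,t)$; so Cauchy--Lipschitz yields a unique characteristic flow $x_i(s,t)$ with $\partial_tx_i=u_i(x_i,t)$ and $x_i(s,0)=s$. Set
\[
\xi_i=x_i-s,\quad z_i=u_i\circ x_i,\quad w_i=(\partial_xu_i)\circ x_i.
\]
Grönwall applied to $\partial_t\partial_sx_i=\partial_xu_i(x_i,t)\,\partial_sx_i$ yields a uniform lower bound $1+\essinf\partial_s\xi_i(\cdot,t)\ge\rho>0$ on $\tint$, and Corollary \ref{ellpee} together with Lemma \ref{homeo4} shows $(\xi_i,z_i,w_i)\in C(\tint\!:\!\xx)$ with $\partial_t\xi_i=z_i$ in $C(\tint\!:\!L^2)$.

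I would then derive the other two Lagrangian ODEs. Evaluating \eqref{CH-char} along the flow,
\[
\partial_tz_i=(\partial_tu_i+u_i\partial_xu_i)(x_i,t)=-G'\ast M(u_i,\partial_xu_i)(x_i,t),
\]
and changing variables $y=x_i(\sigma,t)$ in the convolution (Lemma \ref{ellone}) rewrites the right-hand side as $F_1(\xi_i,z_i,w_i)$. The chain rule gives the compatibility identity
\[
\partial_sz_i(s,t)=w_i(s,t)\bigl(1+\partial_s\xi_i(s,t)\bigr),
\]
valid pointwise a.e.\ and consistent at $t=0$ with $z_0'=u_0'=w_0$. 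Differentiating this identity in $t$ distributionally, using the already-established $\partial_t\xi_i=z_i$ and $\partial_tz_i=F_1$, together with the key identity $\partial_sF_1=(F_2+w_i^2)(1+\partial_s\xi_i)$ from Lemma \ref{harddiff}, one obtains after cancellation
\[
\bigl(F_2(\xi_i,z_i,w_i)-\partial_tw_i\bigr)\bigl(1+\partial_s\xi_i\bigr)=0,
\]
which, since $1+\partial_s\xi_i\ge\rho>0$, yields $\partial_tw_i=F_2(\xi_i,z_i,w_i)$. Integrating in $t$, both triples satisfy the Lagrangian integral equation of Theorem \ref{localexistence} with initial datum $\vv_0=(0,u_0,u_0')$.

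Both triples lie in a common neighborhood $\neigh(\rho,B)$ determined by a priori bounds on $u_1,u_2$, so on a sufficiently small subinterval $[-T_0,T_0]$ the contraction argument of Theorem \ref{localexistence} forces $(\xi_1,z_1,w_1)\equiv(\xi_2,z_2,w_2)$. Since $u_i=z_i\circ s_i$ with $s_i$ the spatial inverse of $x_i$, this yields $u_1\equiv u_2$ on $[-T_0,T_0]$, and finitely many iterations (restarting from the common value at $\pm T_0$) cover $\tint$. The main technical obstacle is the derivation of the equation for $w_i$: because $w_i=\partial_xu_i\circ x_i$ inherits only the weak time regularity of $\partial_xu_i\in L^\infty(\tint\!:\!L^\infty)\cap C(\tint\!:\!L^2)$, it cannot be differentiated in $t$ by a direct calculation. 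The indirect route via differentiating the compatibility relation and invoking Lemma \ref{harddiff} sidesteps this and delivers the $w$-equation in exactly the form needed to close the contraction.
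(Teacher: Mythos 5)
Your overall architecture coincides with the paper's: pull the solution back along the characteristic flow, show that the triple $(\xi,z,w)$ satisfies the Lagrangian integral system governed by the locally Lipschitz field $\fff$, and conclude by ODE uniqueness in $\xx$ (the paper applies Gronwall directly on all of $\tint$ where you use contraction on a small subinterval plus continuation; your derivation of $\partial_tw_i=F_2$ by differentiating the product $\partial_sz_i=w_i(1+\partial_s\xi_i)$ is the same cancellation the paper performs with the quotient $w=\partial_sz/y$).

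There is, however, one genuine gap, and it is precisely the step on which the paper spends most of its effort. You write that ``Gr\"onwall applied to $\partial_t\partial_sx_i=\partial_xu_i(x_i,t)\,\partial_sx_i$'' yields the lower bound on the Jacobian, as if this variational equation were automatic. It is not: $u_i$ is only in $L^\infty(\tint\!:\!W^{1,\infty})$, so $\partial_xu_i(\cdot,t)$ is defined merely a.e., and it is not a priori clear that $\partial_sx_i$ exists, let alone that it solves this ODE --- the composition $\partial_xu_i(x_i(s,t),t)$ is meaningful for a.e.\ $s$ only once one knows the flow does not push a set of positive measure into the null set where $u_i(\cdot,t)$ fails to be differentiable. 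The paper isolates exactly this point as the claim $\partial_sx(s,t)=y(s,t)=\exp\int_0^tw(s,\tau)\,d\tau$, proved via the difference-quotient identity \eqref{rsxid}, the uniform $O(h)$ bounds on $\Delta_sx$ and $R_xu$, and Gronwall, without ever differentiating $u$ pointwise. Everything downstream in your argument rests on this identity: the bound $1+\essinf\partial_s\xi_i\ge\rho$ (hence $\xi_i\in\oo$ and the applicability of Corollary \ref{ellpee} to place $z_i,w_i$ in $X\times Y$), the compatibility relation, and the cancellation giving $\partial_tw_i=F_2$. Note also that the paper obtains the compatibility relation as a consequence of $\partial_sx=y$ together with $\partial_t\partial_sx=\partial_s\partial_tx=\partial_sz$, rather than by applying the chain rule to $z_i=u_i\circ x_i$; if you insist on the chain-rule route you must first establish that $x_i(\cdot,t)$ is bi-Lipschitz so that it preserves null sets, which again requires the Jacobian bounds. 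Supplying the proof of the claim $\partial_sx_i=y_i$ would close the gap and make your argument essentially identical to the paper's.
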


\begin{proof}
The function $u$ is continuous on $\rr\times\tint$, and $u(\cdot,t)$ is Lipschitz for each $t\in\tint$.
By the existence and continuous dependence theorems for ODEs, the
problem
\begin{equation}
\label{xiode}
\pds{t}\xi(s,t)=u(s+\xi(s,t),t),\quad \xi(s,0)=0
\end{equation}
has a unique solution $\xi\in C^1(\tint:  C(\rr))$.

Since $u\in L^\infty(\tint : W^{1,\infty})$, we have that, for $h\ne0$,
\begin{align*}
&\left|h^{-1}[\xi(s+h,t)-\xi(s,t)]\right|\\
&\qquad=\left|h^{-1}\int_0^t[u(s+h+\xi(s+h,\tau),\tau)-u(s+\xi(s,\tau),\tau)]d\tau\right|\\
&\qquad\le \left|\int_0^t \norm{\pds{x}u(\cdot,\tau)}{L^\infty}[1+\left|h^{-1}[\xi(s+h,\tau)-\xi(s,\tau)]\right|d\tau\right|.
\end{align*}
It follows from Gronwall's inequality that
\begin{equation}
\label{xilip}
\left|h^{-1}[\xi(s+h,t)-\xi(s,t)]\right|\le C\left(\int_{-T}^T\norm{\pds{x}u(\cdot,\tau)}{L^\infty}d\tau\right),
\end{equation}
which proves that $\xi\in L^\infty(\tint : W^{1,\infty})$.
From \eqref{xiode}, we can now also say that 
\[
\pds{t}\xi\in C(\tint:  C(\rr))\cap L^\infty(\tint: W^{1,\infty}).
\]

Now define
\begin{align*}
x(s,t)&=s+\xi(s,t)\\
 z(s,t)&=u(x(s,t),t)\\
 w(s,t)&=\pds{x}u(x(s,t),t)\\
 y(s,t)&=\exp \int_0^t w(s,\tau)d\tau,
\end{align*}
for $(s,t)\in\rr\times\tint$.
Note that
\begin{gather}
\label{xode}
\pds{t}x=\pds{t}\xi=z
\intertext{and}
\label{wode}
\pds{t}y=wy.
\end{gather}
These functions  have the following regularity properties:
\begin{gather*}
z\in C(\tint:  C(\rr))\cap L^\infty(\tint: W^{1,\infty}),\\
\pds{s}z,w\in L^\infty(\tint: L^\infty),\\
y\in C(\tint: L^\infty),\\
\pds{t}y\in L^\infty(\tint: L^\infty).
\end{gather*}

We claim that 
\begin{equation}
\label{dsxid}
\pds{s}x(s,t)=y(s,t).
\end{equation}
A simple calculation gives
\begin{multline}
\label{rsxid}
\pds{t} R_sx(s,t;h)
=w(s,t)R_sx(s,t;h)\\
+R_xu(x(s,t),t;\Delta_s x(s,t;h))\;\Delta_s x(s,t;h)/h,
\end{multline}
in which (as in the proof of Theorem \ref{regthm})
\begin{gather*}
R_sx(s,t;h)=
\begin{cases}
h^{-1}[x(s+h,t)-x(s,t)]-y(s,t), & h\ne0\\
0,&h=0,
\end{cases}\\
\ \\
R_xu(x,t;k)=
\begin{cases}
k^{-1}[u(x+k,t)-u(x,t)]-\pds{x}u(x,t),& k\ne0\\
0,&k=0,
\end{cases}
\intertext{and}
\Delta_s x(s,t;h)=x(s+h,t)-x(s,t).
\end{gather*}
Since $w\in L^\infty$, there exist constants $\rho$, $C$ such that
\[
0<2\rho< y(s,t)=\exp  \int_0^t w(s,\tau)d\tau\le C,\quad \text{a.e.\ s,}
\]
 for all $t\in\tint$, and 
 \[
 R_sx(0,t;h)=0,
 \]
  it follows from \eqref{rsxid} and Gronwall's inequality that
\begin{equation*}
\norm{ R_sx(\cdot,t;h)}{L^\infty}\!\!
\le \!\!
\left|\int_0^t \!\!\norm{R_xu(x(\cdot,\tau),\tau;\Delta_s x(\cdot,\tau;h))\;\Delta_s x(\cdot,\tau;h)/h}{L^\infty}
d\tau\right|.
\end{equation*}
By \eqref{xilip}, we see that $\Delta_sx(s,t;h)=O(h)$, uniformly for $(s,t)\in \rr\times\tint$.
Therefore, since $u\in L^\infty(\tint: W^{1,\infty})$, we also have that 
\[
R_xu(x(s,t),t;\Delta_s x(s,t;h))=O(h),
\]
uniformly for $(s,t)\in \rr\times\tint$.  As a consequence, we conclude that
\[
\lim_{h\to0}\norm { R_sx(\cdot,t;h)}{L^\infty}=0,
\]
which verifies the claim.

Having verified that 
\[
1+\essinf\pds{s}\xi(\cdot,t)=\essinf \pds{s}x(\cdot,t)=\essinf y(\cdot,t)>\rho,
\]
we can apply Corollary \ref{ellpee} to obtain
\[
z\in L^\infty(\tint: H^1)
\quad \text{and}\quad
w\in L^\infty(\tint: L^2).
\]
Thus, we also get that
\[
\xi(s,t)=\int_0^tz(s,\tau)d\tau\in L^\infty(\tint: H^1).
\]
Thus, $(\xi,z,w)(\cdot,t)\in\oo\times X\times Y$, for $t\in\tint$.

By the chain rule, we have
\[
\pds{t}z(s,t)=(\pds{t}u+u\pds{x}u)(x(s,t),t),
\]
and  since $u$ solves \eqref{CH-char}, we see that
\begin{align*}
z(s,t)&=z(s,0)+\int_0^t\pds{t}z(s,\tau) d\tau\\
&=u_0(s)-\int_0^tG'\ast M(u,\pds{x}u)(x(s,\tau),\tau)d\tau.
\end{align*}
Using Lemma \ref{ellone}, we can change variables to get
\begin{equation}
\label{zeeinteq}
z(s,t)=u_0(s)+\int_0^tF_1(\xi,z,w)(s,\tau)d\tau.
\end{equation}
Therefore,
\[
\pds{t}z=F_1(\xi,z,w),
\]
and then by Lemma \ref{harddiff}, 
\begin{equation}
\label{dtdszee}
\pds{t}\pds{s}z=(F_2(\xi,z,w)+w^2)\pds{s}x.
\end{equation}
From \eqref{wode}, \eqref{dsxid}, and \eqref{xode}, we see that
\[
w=\frac{\pds{t}y}{y}=\frac{\pds{t}\pds{s}x}{y}=\frac{\pds{s}z}{y}.
\]
From \eqref{dtdszee}, \eqref{wode}, and \eqref{dsxid}, this yields
\[
\pds{t}w=\frac{\pds{t}\pds{s}z}{y}-\frac{\pds{s}z\pds{t}y}{y^2}=(F_2(\xi,z,w)+w^2)-w^2=F_2(\xi,z,w).
\]
Observing that 
\[
w(s,0)=\pds{x}u(x(s,0),0)=u_0'(x(s,0))=u_0'(s),
\]
we find that
\begin{equation}
\label{winteq}
w(s,t)=u_0'(s)+\int_0^tF_2(\xi,z,w)(s,\tau)d\tau,\quad t\in\tint.
\end{equation}

Combining \eqref{xode}, \eqref{zeeinteq}, and \eqref{winteq}, we obtain that
\[
\vv(s,t)=(\xi(s,t),z(s,t),w(s,t))\in C(\tint: \oo\times X\times Y)
\]
solves the integral equation
\[
\vv(s,t)=\vv(s,0)+\int_0^t\fff(\vv(s,\tau))d\tau,\quad t\in\tint,
\]
where $\fff$ was defined and shown to be locally Lipschitz on $\oo\times X\times Y$ in Theorem \ref{effloclip}.
Thus, we can
use Gronwall's inequality to see that $\vv$ is the unique solution on $\tint$.  Since $x(\cdot,t)$
is a homeomorphism for every $t\in\tint$, by Lemma \ref{homeo}, we conclude that $u$ is unique,
as well.

\end{proof}

\subsection{Continuous dependence on initial conditions}

\begin{theorem}\label{contdep}
Let $\eps>0$  be given.
Fix initial data $u_{01}\in X$, and choose $B>\norm{u_{01}}{X}$.
There exists a $0<\delta<B-\norm{u_{01}}{X}$ such that for all
\[
u_{02}\in  \neigh_\delta=\{v\in X:\norm{u_{01}-v}{X}<\delta\}
\]
the corresponding solutions 
\[
u_j\in C(\tints{j}: H^1)\cap C^1(\tints{j}: L^2),\quad j=1,2,
\]
 of the initial value problem \eqref{CH-char} constructed in Theorem \ref{regthm} satisfy
\[
\sup_{|t|\le T}(\norm{u_1(\cdot,t)-u_2(\cdot,t)}{H^1}+\norm{\pds{t}u_1(\cdot,t)-\pds{t}u_2(\cdot,t)}{L^2})<\eps,
\]
where $T=\min\{T_1,T_2\}$.
\end{theorem}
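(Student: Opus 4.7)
The plan is to lift continuous dependence from the Lagrangian formulation back to the Eulerian frame. Given $u_{01}$ with $\|u_{01}\|_X < B$, set $\mathbf{v}_{01}=(0,u_{01},u_{01}')$. For $u_{02}$ in a small ball around $u_{01}$, set $\mathbf{v}_{02}=(0,u_{02},u_{02}')$, which lies in a common set $\neigh(\rho,B')$ for some $B' > B$ provided $\delta$ is small. By Theorem \ref{effloclip}, $\fff$ is locally Lipschitz on $\oxy$, so the contraction argument of Theorem \ref{localexistence} yields a common time $T>0$ on which both Lagrangian solutions $\mathbf{v}_j=(\xi_j,z_j,w_j)$, $j=1,2$, exist in $C^1([-T,T]:\neigh(\rho,B''))$, for some $B''$. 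A standard Gronwall argument on the difference of the two integral equations, exploiting the local Lipschitz bound on $\fff$, then produces the Lagrangian continuous dependence
\[
\sup_{[-T,T]}\norm{\mathbf{v}_1(\cdot,t)-\mathbf{v}_2(\cdot,t)}{\xx}
\le C\,\norm{\mathbf{v}_{01}-\mathbf{v}_{02}}{\xx}
\le C\norm{u_{01}-u_{02}}{X}.
\]

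The second step is to transfer this estimate to Eulerian coordinates. Writing $s_j(x,t)=x+S\xi_j(x,t)$, one decomposes
\[
u_1-u_2 = \bigl(z_1\circ s_1-z_2\circ s_1\bigr)+\bigl(z_2\circ s_1-z_2\circ s_2\bigr).
\]
The first difference is controlled in $L^2$ by Corollary \ref{ellpee}: $\|(z_1-z_2)\circ s_1\|_{L^2}\le C\|z_1-z_2\|_{L^2}$. The second uses that $z_2(\cdot,t)\in W^{1,\infty}$ uniformly, so it is Lipschitz, giving $\|z_2\circ s_1-z_2\circ s_2\|_{L^2}\le\|\partial_s z_2\|_{L^\infty}\|s_1-s_2\|_{L^2}$, and Lemma \ref{ycont} bounds the latter by $C\|\xi_1-\xi_2\|_{L^2}$. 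An identical decomposition for $\partial_x u_j=w_j\circ s_j$ (using \eqref{wid}) gives the $(w_1-w_2)\circ s_1$ piece by Corollary \ref{ellpee}.

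The main obstacle sits in the term $w_2\circ s_1-w_2\circ s_2$: since $w_2$ lies only in $Y$, not in $\lip$, there is no Lipschitz bound available. This is exactly the phenomenon responsible for the loss of Lipschitz dependence highlighted in Remark \ref{rem1}. I would handle it via Lemma \ref{homeo4}: the map $\xi\mapsto w_2\circ s$ is locally uniformly continuous from $\oo$ into $L^2$, so given $\eps>0$ one can pick $\delta>0$ (depending on $w_2$, hence on $u_{01}$ and $B$) such that whenever $\|\xi_1-\xi_2\|_{L^2}<\delta'$, the quantity $\|w_2\circ s_1-w_2\circ s_2\|_{L^2}$ is smaller than $\eps$. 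Since the Lagrangian estimate forces $\|\xi_1-\xi_2\|_{L^2}\le C\|u_{01}-u_{02}\|_X$, choosing $\delta$ small enough in the original neighborhood produces the required $L^2$-smallness of $\partial_x u_1-\partial_x u_2$.

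Finally, for $\partial_t u_j$, I invoke \eqref{CH-char} to write $\partial_t u_j=-u_j\partial_x u_j-G'*M(u_j,\partial_x u_j)$. Expanding the difference and using $L^\infty$ and $L^2$ bounds already obtained for $u_j$ and $\partial_x u_j$, along with Lemma \ref{mnlip} and Young's inequality for convolution with $G'\in L^1$, the $L^2$ continuity of $\partial_t u_1-\partial_t u_2$ follows from that of $u_1-u_2$ and $\partial_x u_1-\partial_x u_2$. Choosing $\delta$ to simultaneously satisfy both the $H^1$ and the $L^2$-in-time-derivative requirements yields the conclusion, completing the proof of continuous dependence in the advertised topology while confirming that stronger $W^{1,\infty}$ continuity truly cannot be obtained by this route.
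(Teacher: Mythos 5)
Your overall strategy --- Gronwall in the Lagrangian frame, transfer to Eulerian coordinates via composition estimates, and the PDE for the time derivative --- is exactly the paper's route, and your identification of $w\circ s_1-w\circ s_2$ as the crux (with $w$ only in $Y$, not Lipschitz) is correct. However, the way you resolve that crux has a circularity. You apply Lemma \ref{homeo4} to the function $w_2$, i.e.\ to the velocity gradient of the solution with the \emph{perturbed} datum $u_{02}$. The modulus of continuity produced by Lemma \ref{homeo4} depends on the function being composed (its proof chooses a smooth approximant $\phi$ of that specific $f$), so your $\delta'$ depends on $w_2$, hence on $u_{02}$ --- but $u_{02}$ is precisely the datum you are trying to constrain by the choice of $\delta$. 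There is no uniform modulus over the family $\{w_2: u_{02}\in\neigh_\delta\}$, which is merely bounded, not compact, in $L^2$. The fix is to flip the decomposition so that the ``same function, two reference maps'' term carries $w_1$ rather than $w_2$: write $\partial_xu_1-\partial_xu_2=(w_1\circ s_1-w_1\circ s_2)+\left((w_1-w_2)\circ s_2\right)$, apply the uniform continuity to $w_1$ (which depends only on the fixed datum $u_{01}$ and $B$), and control the second piece by Corollary \ref{ellpee} and the Lagrangian Gronwall bound. This is what the paper does.

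A second, smaller omission: Lemma \ref{homeo4} is stated for a single time-independent $f\in L^2(\rr)$, whereas you need the estimate for $w_1(\cdot,t)$ uniformly over $t\in\tint$. The paper isolates this as a separate Claim, proved by partitioning $\tint$, applying Lemma \ref{homeo4} at finitely many times $t_k$, and using the uniform continuity of $t\mapsto w_1(\cdot,t)$ in $L^2$ to interpolate. Your sketch skips this step; it is needed to get a single $\delta$ valid for all $t$. Aside from these two points (both repairable, and the repair lands on the paper's argument), your treatment of the $u_1-u_2$ term via the uniform $W^{1,\infty}$ bound on $z$, and of the time derivatives via the equation, Lemma \ref{mnlip}, and Young's inequality, matches the paper.
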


\begin{proof}
In this proof, generic constants may depend on $\rho$, $B$, and $T=\min\{T_1,T_2\}$.

Given $u_{01}\in X$ and $u_{02}\in  \neigh_\delta$, with $\delta<B-\norm{u_{01}}{X}$,
define
\[
\vv_{0j}=(
0,u_{0j},u'_{0j}).
\]
Since $\norm{\vv_{0j}}{\xx}=\norm{u_{0j}}{X}<B$, we see that 
\[
\vv_{0j}\in \neigh(\rho, B)=\{\vv=(\xi,z,w)\in\xx:\xi\in\oo,\; \norm{\vv}{\xx}\le B\}.
\]
Let 
\[
\vv_j=
(\xi_j,u_j,w_j)
\in C^1(\tint:   \neigh(\rho, B))
\]
 be the corresponding solutions from Theorem  \ref{localexistence}.
 This means that 
 \begin{equation}
\label{mainbd0}
\sup_{|t|\le T}
\Bigg(\norm{\pds{t}^k\xi_j(\cdot,t)}{X}
+\norm{\pds{t}^kz_j(\cdot,t)}{X}
+\norm{\pds{t}^kw_j(\cdot,t)}{Y}\Bigg)<B
\end{equation}
and $\xi_j(\cdot,t)\in\oo$, $t\in\tint$, for $j=1,2$ and $k=0,1$.
 
Since the vector field $\fff$ is locally Lipschitz, we obtain from Gronwall's inequality that
\[
\sup_{|t|\le T}\norm{\vv_1(\cdot,t)-\vv_2(\cdot,t)}{\xx}\le C\norm{\vv_{01}-\vv_{02}}{\xx}=C\norm{u_{01}-u_{02}}{X}<C\delta.
\]
This implies that
\begin{equation}\label{mainbd}
\begin{split}
\sup_{|t|\le T} \sum_{k=1,2}\Bigg(\|\pds{t}^k\xi_1(\cdot,t)&-\pds{t}^k\xi_2(\cdot,t)\|_{X}
+\norm{\pds{t}^kz_1(\cdot,t)-\pds{t}^kz_2(\cdot,t)}{X}\\
&+\|\partial_t^kw_1(\cdot,t)-\pds{t}^kw_2(\cdot,t)\|_{Y}\Bigg)< C\delta.
\end{split}
\end{equation}

By Lemma \ref{ycont} and \eqref{mainbd}, we have
\begin{equation}
\label{mapcont}
\begin{split}
\sup_{|t|\le T}\|S\xi_1&(\cdot,t)-S\xi_2(\cdot,t)\|_{L^2}\\
&\le C\sup_{|t|\le T}\norm {\xi_1(\cdot,t)-\xi_2(\cdot,t)}{L^2}
\le C\delta. 
\end{split}
\end{equation}

The rest of the proof will rely on the following statement.
\bigskip

\begin{claim}
\label{claim}
Define the set
\[
 \neigh=\{\xi\in\oo:\norm{\xi}{X}<B\}.
\]
Let 
$
f\in C(\tint: L^2).
$
The map
\[
\xi(\cdot,t)\mapsto f(s(\cdot,t),t)
\]
is uniformly continuous from
$C(\tint:  \neigh)$
into
 $ L^\infty(\tint: L^2)$.
 \end{claim}

To prove the Claim \ref{claim}, let $\eps'>0$ be given.  Since $\tint$ is compact
and  $f\in C(\tint: L^2)$,
the map 
\[
t\mapsto f(\cdot,t)
\]
is uniformly continuous from $\tint$ to $L^2$.  So we may choose $\alpha>0$ such that
\[
t',t\in\tint,\;|t'-t|<\alpha\quad\text{implies}\quad \norm{f(\cdot,t')-f(\cdot,t)}{L^2}<\eps'/4C_0,
\]
where the constant $C_0$ will be defined below.

Define a partition
\[
t_k=-T+k\Delta t,\quad k=0,1,\ldots,n,
\]
where $\Delta t=2T/n<\alpha$.  Then for any  $t\in\tint$, there exists a $t_k$ such that
$|t-t_k|<\alpha$.

Let  $\xi_1,\xi_2\in C(\tint:  \neigh)$.  
By Lemma \ref{homeo4}, there exists $\delta'>0$ such that the inequality
\begin{equation}
\label{term2}
\sup_{|t|\le T}\norm{f(s_1(\cdot,t),t_k)-f(s_2(\cdot,t),t_k)}{L^2}<\eps'/2,\quad k=0,1,\ldots,n.
\end{equation}
holds, provided that
\begin{equation}
\label{deltaneigh}
\sup_{|t|\le T}\norm{\xi_1(\cdot,t)-\xi_2(\cdot,t)}{L^2}<\delta'.
\end{equation}

Now, with $t\in\tint$ fixed, we have
\begin{equation}\label{mainest}
\begin{split}
\|f(s_1(\cdot,t),t)&-f(s_2(\cdot,t),t)\|_{L^2}\\
\le&  \norm{f(s_1(\cdot,t),t)\!-\!f(s_1(\cdot,t),t_k)}{L^2}\\
&+\norm{f(s_1(\cdot,t),t_k)-f(s_2(\cdot,t),t_k)}{L^2}\\
&+\norm{f(s_2(\cdot,t),t_k)-f(s_2(\cdot,t),t)}{L^2}.
\end{split}
\end{equation}
By Corollary \ref{ellpee}, there exists a constant $C_0$ depending only on $B$ such that
\begin{equation}\label{term13}
\begin{split}
\|f(s_j(\cdot,t),t)&-f(s_j(\cdot,t),t_k)\|_{L^2}\\
&\le C_0\norm{f(\cdot,t)-f(\cdot,t_k)}{L^2}\\
&<\eps'/4,\hskip30pt\text{for}\quad j=1,2;\quad k=0,1,\ldots,n.
\end{split}
\end{equation}

Therefore, if \eqref{deltaneigh} holds, then the estimates \eqref{mainest}, \eqref{term13}, \eqref{term2} imply that
\[
\norm{f(s_1(\cdot,t),t)-f(s_2(\cdot,t),t)}{L^2}<\eps',\quad\text{for all}\quad  t\in\tint.
\]
This completes the proof of the Claim \ref{claim}.

\bigskip

By \eqref{mainbd0}, we see that $\xi_j\in C(\tint: \neigh)$.

By the definition of $u_j$, we have
\begin{equation*}
\begin{split}
&\norm {u_1(\cdot,t)-u_2(\cdot,t)}{L^2}\\
&\le\norm{z_1(s_1(\cdot,t),t)-z_1(s_2(\cdot,t),t)}{L^2}
+\norm{z_1(s_2(\cdot,t),t)-z_2(s_2(\cdot,t),t)}{L^2}.
\end{split}
\end{equation*}
Since $u_1\in C(\tint: L^2)$, the Claim \ref{claim} and  \eqref{mapcont} imply that the first term satisfies
\[
\norm{z_1(s_1(\cdot,t),t)-z_1(s_2(\cdot,t),t)}{L^2}=o(\delta).
\]
From Corollary \ref{ellpee}, the second term is estimated by
\[
\norm{z_1(s_2(\cdot,t),t)-z_2(s_2(\cdot,t),t)}{L^2}\le C\norm {z_1(\cdot,t)-z_2(\cdot,t)}{L^2}
\le C\delta.
\]
This proves that 
\[
\sup_{|t|\le T}\norm{u_1(\cdot,t)-u_2(\cdot,t)}{L^2}=o(\delta).
\]

Since $\pds{x}u_j(\cdot,t)=w_j(s_j(\cdot,t),t)$ and $w_j\in C(\tint;L^2)$, the same argument as above  yields
\[
\sup_{|t|\le T}\norm{\pds{x}u_1(\cdot,t)-\pds{x}u_2(\cdot,t)}{L^2}=o(\delta).
\]
Therefore, we have that
\begin{equation}
\label{honecontdept}
\sup_{|t|\le T} \norm{u_1(\cdot,t)-u_2(\cdot,t)}{H^1}=o(\delta).
\end{equation}

For the time derivatives, we use the PDE \eqref{CH-char} to obtain
\begin{multline*}
\norm{\pds{t}u_1(\cdot,t)-\pds{t}u_2(\cdot,t)}{L^2}\le
\norm {u_1\pds{x}u_1(\cdot,t)-u_2\pds{x}u_2(\cdot,t)}{L^2}\\
+\norm{G'\ast M(u_1,\pds{x}u_1)(\cdot,t)-G'\ast M(u_2,\pds{x}u_2)(\cdot,t)}{L^2}.
\end{multline*}
These terms are easily estimated using the Young inequality, Lemma \ref{mnlip}, Theorem \ref{regthm}, and \eqref{honecontdept}, with the result that
\[
\sup_{|t|\le T}\norm{\pds{t}u_1(\cdot,t)-\pds{t}u_2(\cdot,t)}{L^2}=o(\delta).
\]
This completes the proof. 
\end{proof}

Taken together, Theorems \ref{localexistence}, \ref{regthm}, \ref{euniqueness}, and \ref{contdep}
imply Theorem \ref{thm1}.

\section{Propagation of Regularity}
\begin{proof}[{\bf Proof of Theorem \ref{thm2} part (a)}]

We consider  $\rho \in C^{\infty}_0(\R),$ such that
$$
\rho(x)\geq 0,\;\;\;  \,\supp (\rho)\subset (-1,1),\;\;\;\int \rho(x)dx =1,
$$
and define $\rho_{\eps}(x)=\frac{1}{\eps} \rho(\frac{x}{\eps})$ for $ \eps\in (0,1)$. For   $u_0\in X$ let
$$
u_0^{\eps}(x)=\rho_{\eps}\ast u_0(x).
$$
Thus, $u_0^{\eps}\in H^s(\R)$ for any $s\in \R$
and  for $\eps\in(0,1)$
$$
\|u_0^{\eps}\|_X\leq \|u_0\|_X.
$$
 We denote by $u^{\eps}=u^{\eps}(x,t)$ the corresponding solution of the IVP associated to the RCH equation \eqref{RCH} provided by Theorem \ref{thm1}. By Theorem \ref{thm1} there exist $K>0$ and  $T=T(\|u_0\|_X)>0$ (independent of $\eps\in(0,1)$) such that
\begin{equation}\label{bound}
\sup_{\eps \in (0,1)}\,\sup_{t\in[-T,T]}\|u^{\eps}(t)\|_X\leq 2 c \|u_0\|_X=K.
\end{equation}
In particular,
$$
\int_{-T}^T\|\partial_xu^{\eps}(\cdot, t)\|_{\infty}\,dt < c,
$$
with $c$ independent of $\eps$. Using the \it a priori \rm energy estimate, see \cite{KP}, one has that for  any $\tilde T>0$ and any $s>0$
$$
\sup_{t\in[-T,\tilde T]}\|u^{\eps}(\cdot,t)\|_{s,2}\leq c_s\|u_0^{\eps}\|_{s,2}\,\exp{(\int_{-\tilde T}^{\tilde T}\|\partial_xu^{\eps}(\cdot, t)\|_{\infty}dt)},
$$
this combined with \eqref{bound} allows us to extend the solution $u^{\eps}$ of the IVP for the RCH equation \eqref{RCH} obtained in \cite{LO} and in \cite{RB} to the time interval $[-T,T]$ such that 
$$
u^{\eps}\in C([-T,T] : H^s(\R)),\;\;\;\;\text{for any}\;\;\;\; s\in \R.
$$

To simplify the exposition we shall assume that 
$$
M(u^{\eps})=(\partial_xu^{\eps})^2,\;\;\;\;\text{and}\;\;\;\;\Omega =(a,b),\;\,\,\;\;a,\,b\in \R,\;\,a<b.
$$
  It will be clear from our argument that this does not entail a loss of generality.
\vskip.1in
\underline {Case $j=2$: }
\vskip.1in
 For each $\eps\in (0,1)$ the function $\partial_x^2u^{\eps}$ satisfies the equation
\begin{equation}\label{eq1}
\partial_t\partial_x^2u^{\eps}+u^{\eps}\partial_x\partial_x^2u^{\eps}+c_2\partial_xu^{\eps}\partial_x^2u^{\eps}
-\partial_xG\ast M(u^{\eps})=0.
\end{equation}

By hypothesis it follows that 
$$
u_0^{\eps}|_{(a+\eps,b-\eps)}\in H^{2,p}((a+\eps,b-\eps)),
$$ 
with norm independent of $\eps$.
Multiplying the equation \eqref{eq1} by 
$$
 p\, |\partial_x^2u^{\eps}(x(s,t),t)|^{p-1}\sgn(\partial_x^2u^{\eps}(x(s,t),t)),
$$
and using that
$$
\partial_t(\partial_x^2u^{\eps}(x(s,t),t))=\partial_t\partial_x^2u^{\eps}(x(s,t),t)+u\partial_x\partial_x^2u^{\eps}(x(s,t),t)
$$
one gets 
\begin{equation}\label{eq2}
\begin{aligned}
& \frac{d}{dt} \int_{a+\eps}^{b-\eps} |\partial_x^2u^{\eps}(x(s,t),t)|^pds\\
&\leq c_2 \int_{a+\eps}^{b-\eps} |\partial_xu^{\eps}\partial_x^2u^{\eps}(x(s,t),t)|   |\partial_x^2u^{\eps}(x(s,t),t)|^{p-1}ds\\
&\hskip10pt +\int_{a+\eps}^{b-\eps}|\partial_xG\ast (\partial_xu)^2(x(s,t),t)| |\partial_x^2u^{\eps}(x(s,t),t)|^{p-1}ds\\
&=A_1+A_2.
\end{aligned}
\end{equation}
One sees that
$$
A_1\leq \|\partial_xu^{\eps}(t)\|_{\infty} \int_{a+\eps}^{b-\eps} |\partial_x^2u^{\eps}(x(s,t),t)|^pds,
$$
and using  that quantity $|\partial x/\partial s\,(s,t)|$ is bounded above and below uniformly in $t\in[-T,T]$ and $\eps\in (0,1)$ that
\[
\begin{aligned}
A_2&\leq c\|\partial_xG\|_p\|(\partial_xu^{\eps})^2\|_1(\int_{a+\eps}^{b-\eps} |\partial_x^2u^{\eps}(x(s,t),t)|^pds)^{(p-1)/p}\\
&\leq c\|\partial_xu^{\eps}\|_2^2(\int_{a+\eps}^{b-\eps} |\partial_x^2u^{\eps}(x(s,t),t)|^pds)^{(p-1)/p}.
\end{aligned}\]
Inserting these estimates in \eqref{eq2} it follows that
\begin{equation}\label{eq3}
\begin{aligned}
& \frac{d}{dt}( \int_{a+\eps}^{b-\eps} |\partial_x^2u^{\eps}(x(s,t),t)|^pds)^{1/p}\\
&\leq c\| \partial_x u^{\eps}(t)\|_{\infty} ( \int_{a+\eps}^{b-\eps} |\partial_x^2u^{\eps}(x(s,t),t)|^pds)^{1/p}
+ c\|\partial_xu^{\eps}\|_2^2.
\end{aligned}
\end{equation}

From \eqref{eq3} one concludes, using \eqref{bound}, that for any $\eps\in(0,1)$
$$
\sup_{[-T,T]}\big( \int_{a+\eps}^{b-\eps} |\partial_x^2u^{\eps}(x(s,t),t)|^p\,ds\big)^{1/p}
\leq C(\|u_0\|_X;(\int_a^b|\partial_x^2u_0(x)|^pdx)^{1/p}).
$$
Combining this estimate with the fact, provided by Theorem \ref{thm1}, that $u^{\eps}\to u$ in $C([-T,T]:H^1(\R))$ and some weak covergence arguments it follows that
\[
\underset{[-T,T]}{\sup} (\int_{a}^{b}\!\! |\partial_x^2u(x(s,t),t)|^pds)^{1/p}\!  \leq \!C(\|u_0\|_X; (\int_a^b\!\!|\partial_x^2u_0(x)|^pdx)^{1/p}).
\]
Finally, using that the quantity $|\partial x/\partial s(s,t)|$ is bounded above and below uniformly in $t\in[-T,T]$ one obtains the desired result
\[
\sup_{[-T,T]}( \int_{a(t)}^{b(t)} |\partial_x^2u(x,t)|^pds)^{1/p}  \leq C(\|u_0\|_X;(\int_a^b|\partial_x^2u_0(x)|^pdx)^{1/p}),
\]
where
$$
a(t)=x(a,t),\;\;\;\;\;\text{and}\;\;\;\;\;\;b(t)=x(b,t).
$$
\vskip.1in
\underline {Case $j=3$: }
\vskip.1in
 Using that 
$$
\partial_x^2G\ast f= G\ast f-f,
$$
it follows that for each $\eps\in (0,1)$ the function $\partial_x^3u^{\eps}$ satisfies the equation
\begin{equation}\label{eqb1}
\begin{aligned}
&\partial_t\partial_x^3u^{\eps}+u^{\eps}\partial_x\partial_x^3u^{\eps}+c^1_3\partial_xu^{\eps}\partial_x^3u^{\eps}\\
&+c^2_3\partial_x^2u^{\eps}\partial_x^2u^{\eps}
+(\partial_xu^{\eps})^2-G\ast M(u^{\eps})=0.
\end{aligned}
\end{equation}
We observe that reapplying the previous argument for the case $j=2$ one can handle the first three terms in \eqref{eqb1}. The fourth term in
 \eqref{eqb1} was estimated in the previous step, and the final two term are bounded when estimated in the $L^p(\R)$-norm. Hence, reapplying the argument given in the case $j=2$ one gets the desired result.
\vskip.1in
\underline {Case $j>3$: }
\vskip.1in
Writing the equation for $\partial_x^ju^{\eps}$ one observes that this has three kind of terms, the first ones appear as :
$$
\partial_t\partial_x^ju^{\eps}+u^{\eps}\partial_x\partial_x^ju^{\eps}+c^1_j\partial_xu^{\eps}\partial_x^ju^{\eps}
$$
which can be estimated using the argument provided in detail for the case $j=2$, the second ones are terms involving 
derivatives of order less than $j$, i.e. product of terms of the form
$$
\partial_x^lu^{\eps}\,\;\;\;\;\;\;\text{with}\;\;\;\;\;l=2,...,j-1,
$$
which have been previously estimated, and finally each term in the third group have one of the following form
$$
(\partial_xu^{\eps})^2,\;\;\;\;\;\;\;\;\;\;G\ast M(u^{\eps}),\;\;\;\;\;\partial_xG\ast M(u^{\eps})
$$
which can be estimated in the whole real line. Hence, the result for the general case follows the argument previously described.
\end{proof}

\begin{proof}[{\bf Proof of Theorem \ref{thm2} part (b)}]

To simplify the exposition  as in the proof of part (a) we shall assume 
$$
M(u)=(\partial_xu)^2\;\;\;\;\;\;\text{and}\;\;\;\;\;\Omega=(a,b).
$$ 
It will be clear from our proof below that these assumptions do not represent any loss of generality.

\vskip.1in
\underline{Case:} $\;\partial_xu_0|_{(a,b)}\in C(a,b)$.
\vskip.1in
 From the proof of Theorem \ref{thm1} in Lagrangian coordinates we have
 \[
 w(s,t)=w_0(s)- \int_0^tw^2(s,\tau)d\tau +q(s,t),
\]
 where
\[
\label{w}
w(s,t)=\partial_xu(x(s,t),t),
\]
and
 \begin{equation}
 \label{aa2}
 q(s,t)=\int_0^t\int_{-\infty}^{\infty} G(x(s,\tau)-y)w^2(s(y,\tau),\tau)dy\,d\tau.
 \end{equation}

Since 
$$
w\in C([-T,T] : L^2(\R)\cap L^{\infty}(\R)),
$$
it follows  that
\begin{equation}
 \label{aa3}
 \begin{aligned}
q= q(s,t)\in & \, C^1([-T,T]: X)\\
\equiv &\, C^1([-T,T]:H^1(\R)\cap W^{1,\infty}(\R)).
\end{aligned}
 \end{equation}
 
We recall the hypothesis  $w_0|_{(a,b)}\in C(a,b)$ and consider the sequence
\begin{equation}
\label{aa4}
w_{n+1}(s,t)=w_0(s)+\int_0^t w^2_n(s,\tau)d\tau +q(s,t),\;\;\;\;\;\;\;\;\;\;n\in \Z^+.
\end{equation}
with
$$
w_1(x,t)=w_0(s)+q(s,t)\in C([-T,T]:C(a,b)).
$$

Thus, $w_n \in C([-T,T]:C(a,b))$ for each $n\in\Z^+$ with $w_n$ converging to $w=w(x,t)$ in the $L^{\infty}(\R\times [-T,T])$-norm. Hence,
\[
\label{aa5}
w\in C([-T,T]:C(a,b)).
\]

Since
\[
\partial_x u(x,t)=w(s(x,t),t),
\]
with $s=s(x,t)$ the inverse of 
\begin{equation}
\label{aa6a}
x(s,t)=s+\int_0^tz(s,\tau) d\tau,
\end{equation}
thus
$$
x(s,t)-s=\int_0^t z(s,\tau)d\tau\in C^1([-T,T]:X),
$$
and it follows that
\[
\partial_xu\in C([-T,T]: C(x(a,t),x(b,t)))
\]
which yields the desired result.

\vskip.1in
\underline{Case:} $\,\partial_xu_0|_{(a,b)}\in C^{\theta}(a,b),\;\theta\in(0,1)$.
\vskip.1in
Since by hypothesis $w_0=w_0(s) \in C([-T,T]:C^{\theta}(a,b))$ from \eqref{aa3} one gets that
$$
w_1(x,t)=w_0(s)+q(s,t)\in C([-T,T]:C^{\theta}(a,b)),
$$
and from \eqref{aa4} that  $w_n \in C([-T,T]:C^{\theta}(a,b))$ for each $n\in\Z^+$ with $w_n$ converging in the $L^{\infty}([-T,T]:C^{\theta}(a,b))$-norm, since

\[
\begin{split}
\sup_{[-T,T]}\|w_{n+1}(t)\|_{C^{\theta}(a,b)}
\leq & \|w_0\|_{C^{\theta}(a,b)} \\
&+ T \sup_{[-T,T]} \|w_{n}(t)\|_{\infty}
\sup_{[-T,T]}\|w_{n}(t)\|_{C^{\theta}(a,b)}\\
&+\sup_{[-T,T]}\|q(t)\|_{C^{\theta}(\R)},
\end{split}
\]
and
\[
\begin{split}
\sup_{[-T,T]}\|&(w_{n+1}-w_n)(t)\|_{C^{\theta}(a,b)}\\
&\leq  T (\sup_{[-T,T]} \|w_{n}(t)\|_{\infty}+\sup_{[-T,T]}\|w_{n-1}(t)\|_{\infty})\times\\
&\hskip25pt \sup_{[-T,T]}\|(w_{n}-w_{n-1})(t)\|_{C^{\theta}(a,b)},
\end{split}
\]
with
$$
T\sup_{[-T,T]}\|w_n(t)\|_{\infty}\leq cT\|w_0\|_{\infty}\leq cT\|u_0\|_X<1/2.
$$

Hence,
\[
w\in C([-T,T]:C^{\theta}(a,b)).
\]

Since
\[
\partial_x u(x,t)=w(s(x,t),t),
\]
with $s=s(x,t)$ as above, see \eqref{aa6a}, it follows that
\[
\partial_xu\in C([-T,T]: C^{\theta}(x(a,t),x(b,t)))
\]
which yields the desired result.

\vskip.1in

 \underline{Case:} $\partial_xu_0|_{(a,b)}\in W^{1,\infty}(a,b)$.
\vskip.1in

Combining the hypothesis $\partial_s w_0(s) \in C([-T,T]:L^{\infty}(a,b))$ with \eqref{aa2}--\eqref{aa3} it follows that
$$
\partial_s w_1(s,t)=\partial_sw_0(s)+\partial_sq(s,t)\in C([-T,T]:L^{\infty}(a,b)).
$$
Also, since for each $n\in\Z^+, n\geq 2$
\begin{equation}\label{aa4a}
\partial_sw_{n+1}(s,t)=\partial_sw_0(s)+2\int_0^t w_n\partial_sw_n(s,\tau)d\tau +\partial_sq(s,t),
\end{equation}
it follows, using the previous steps,  that $\partial_xw_n \in C([-T,T]:L^{\infty}(a,b))$ with $\partial_sw_n$ converging in the $L^{\infty}((a,b)\times [-T,T])$-norm.
Hence,
\begin{equation}\label{aa11}
\partial_s w\in C([-T,T]:L^{\infty}(a,b)).
\end{equation}

Since
\[
\partial_x u(x,t)=w(s(x,t),t),
\]
with $s=s(x,t)$ as above  it follows that
\begin{equation}\label{aa13}
\partial_x^2u\in C([-T,T]: L^{\infty}(x(a,t),x(b,t)))
\end{equation}
which yields the desired result.

\vskip.1in
 \underline{Case:} $\,\partial_xu_0|_{(a,b)}\in C^{1}(a,b)$.
\vskip.1in
Now we have that
\begin{equation} \label{aa14}
 \partial_s q(s,t)=\int_0^t\int_{-\infty}^{\infty} \partial_xG(x(s,\tau)-y)w^2(s(y,\tau),\tau) \partial_sx(s,\tau) dy\,d\tau,
 \end{equation}
with
 $$
 \partial_sx(s,t)=\exp\left(\int_0^tw(s,\tau)d\tau\right).
 $$
 Thus, from  the previous step \eqref{aa11}
 $$
\partial_sx \in C^1([-T,T]: W^{1,\infty}(a,b)),
$$
and  by \eqref{aa14}
\begin{equation}\label{aaa1}
q\in C^1([-T,T]: W^{2,\infty}(a,b)) \hookrightarrow C^1([-T,T]:C^1(a,b)).
\end{equation}

Combining the hypothesis $\partial_s w_0(s) \in C([-T,T]:C(a,b))$ and \eqref{aaa1} one gets that
$$
\partial_s w_1(x,t)=\partial_sw_0(s)+\partial_sq(s,t)\in C([-T,T]:C(a,b)).
$$
By the iteration \eqref{aa4a} $\partial_sw_n \in C([-T,T]:C(a,b))$ for each $n\in\Z^+$ with $\partial_sw_n$. Moreover,
by using the previous step,  $\partial_sw_n$ converges in  $L^{\infty}((a,b)\times [-T,T])$.
Hence,
\begin{equation}
\label{aa18}
\partial_s w\in C([-T,T]:C(a,b)).
\end{equation}

As before we use that
\[
\label{aa19}
\partial_x u(x,t)=w(s(x,t),t),
\]
with $s=s(x,t)$ the inverse function
of 
$$
x(s,t)=s+\int_0^t z(s,\tau)d\tau,
$$
which from \eqref{aa13} satisfies that
$$
x-I_s\in C([-T,T]:W^{2,\infty}(a,b)).
$$
This allows to conclude that
\[
\label{aa20}
\partial_x^2u\in C([-T,T]: C(x(a,t),x(b,t)))
\]
which yields the desired result.

 \vskip.1in
 \underline{Case:} $\partial_xu_0|_{(a,b)}\in C^{1+\theta}(a,b),\;\theta\in(0,1)$.
\vskip.1in

Since from previous step
\begin{equation}\label{ab0}
x(s,t)-s=\int^t_0z(s,\tau)d\tau\in C^1([-T,T]:C^2(a,b)),\
\end{equation}
combining \eqref{aa18} and Lemma \ref{pro2} it follows that for any $\eps>0$
\[
\begin{aligned}
 q(s,t)&=\int_0^t\int_{-\infty}^{\infty} G(x(s,\tau)-y)w^2(s(y,\tau),\tau) dy\,d\tau\\
 &\in C^1([-T,T]:C^2(a+\eps,b-\eps)).
 \end{aligned}
 \]

Thus from the hypothesis $\partial_s w_0(s) \in C([-T,T]:C^{\theta}(a,b))$ one gets that for any $\eps>0$
$$
\partial_s w_1(x,t)=\partial_sw_0(s)+\partial_sq(s,t)\in C([-T,T]:C^{\theta}(a+\eps,b-\eps)).
$$
By the iteration \eqref{aa4a} $\partial_sw_n \in C([-T,T]:C^{\theta}(a+\eps,b-\eps))$ for each $n\in\Z^+$ with $\partial_sw_n$ which using the previous steps converges in the $L^{\infty}([-T,T]:C^{\theta}(a+\eps,b-\eps))$-norm.
Hence,
\[
\partial_s w\in C([-T,T]:C^{\theta}(a+\eps,b-\eps)).
\]

As before we use that
\[
\partial_x u(x,t)=w(s(x,t),t),
\]
with $s=s(x,t)$ the inverse function
of $$
x(s,t)=s+\int_0^t z(s,\tau)d\tau,
$$
which satisfies \eqref{ab0}.
This allows to conclude that
\[
\partial_x^2u\in C([-T,T]: C^{\theta}(x(a+\eps,t),x(b-\eps,t))
\]
for any $\eps>0$ which yields the result, in this case.

 It is clear that the previous argument for $\partial_xu_0|_{(a,b)}\in C^{1+\theta}(a,b)$ works, with slight modifications,  for the cases: 
 $\partial_xu_0|_{(a,b)}\in W^{2,\infty}(a,b)$ and $\partial_xu_0|_{(a,b)}\in C^{2}(a,b)$.
 
 Once these steps have been established, we consider the cases :  $\partial_xu_0|_{(a,b)}\in C^{2+\theta}(a,b)$,
  $\partial_xu_0|_{(a,b)}\in W^{3,\infty}(a,b)$ and  $\partial_xu_0|_{(a,b)}\in C^{3}(a,b)$. From the previous steps one observe that
  for any $\eps>0$ 
  $$
  x(s,t)-s\in C^1([-T,T]: C^3(a+\eps,b-\eps)),
  $$
  and
  $$
  q\in C^1([-T,T]: C^3(a+\eps,b-\eps)).
  $$
  At this point, the argument follows a familiar pattern described in details above.

The general argument is similar so it  will be omitted.

 \end{proof}

\section{Proof of Theorem \ref{unique} and Theorem \ref{decay}}
\begin{proof}[{\bf Proof of Theorem \ref{decay}}]

As in the proof of Theorem \ref{thm2} we consider  $\rho \in C^{\infty}_0(\R)$ such that 
$$
\,\rho(x)\geq 0,\;\;\; \supp(\rho)\subset (-1,1),\;\;\;\int \rho(x)dx =1,
$$
and define $\rho_{\eps}(x)=\frac{1}{\eps} \rho(\frac{x}{\eps})$ for $ \eps\in (0,1)$. For   $u_0\in X$ set
$$
u_0^{\eps}(x)=\rho_{\eps}\ast u_0(x).
$$
Thus, $u_0^{\eps}\in H^s(\R)$ for any $s\in \R$. Let $u^{\eps}=u^{\eps}(x,t)$ the corresponding solution of the IVP associated to the RCH equation \eqref{RCH} provided by Theorem \ref{thm1}.  

As  was mentioned in Remark \ref{rem4} the same argument works for  the CH equation. By Theorem \ref{thm1} there exist $K>0$ and  $T=T(\|u_0\|_X)>0$ (independent of $\eps\in(0,1)$) such that
\[
\sup_{\eps \in (0,1)}\,\sup_{t\in[-T,T]}\|u^{\eps}(t)\|_X\leq 2 c \|u_0\|_X=K.
\]
and
$$
u^{\eps}\in C([-T,T]:H^s(\R)),\;\;\;\;\text{for any}\;\;\;\; s\in \R.
$$

Next,  for each $m\in \Z^+$ we define
\[
\label{weight}
\varphi_m(x)=
\begin{cases}
\begin{aligned}
&\,1,\;\;\;\;\; x\leq 0,\\

&\,e^{\theta x},\;\;\; x\in(0,m],\\

&\,e^{\theta m},\;\;\; x\geq m,

\end{aligned}
\end{cases}
\]
 with  $\theta\in (0,1)$.  Notice that
$$
0\leq \varphi'_m(x)\leq \varphi_m(x).
$$

As in the proof of Theorem B given  in \cite{HMPZ} combining  energy estimate, Gronwall's lemma and the 
fact that if $f\in L^2(\R)\cap L^{\infty}(\R)$, then
$$
\lim_{p\uparrow \infty}\|f\|_p=\|f\|_{\infty},
$$
one finds that for any $t\in [0,T]$ 
\begin{equation}\label{estimate1}
\begin{aligned}
& \|u^{\eps}(t)\varphi_m\|_{\infty} +  \|\partial_xu^{\eps}(t)\varphi_m\|_{\infty} 
\leq e^{TK}\Big(\|u^{\eps}_0\varphi_m\|_{\infty} +  \|\partial_xu_0^{\eps}\varphi_m\|_{\infty} \\
&+ \int_0^T(\|\varphi_m \partial_xG\ast M(u^{\eps}(\tau))\|_{\infty}+
\|\varphi_m \partial^2_xG\ast M(u^{\eps}(\tau))\|_{\infty})d\tau\Big).
\end{aligned}
\end{equation}

 A simple calculation shows that
for any $m\in \Z^+$ one has that
\begin{equation}\label{est1}
\varphi_m(x)\,\int_{-\infty}^{\infty} e^{-|x-y|}\,\frac{1}{\varphi_m(y)}dy \leq \frac{4}{1-\theta}.
\end{equation}
Hence, using \eqref{est1} as in \cite{HMPZ} one gets that
\begin{equation}\label{est2}
|\varphi_m\partial_xG\ast f^2(x)|\leq c\| \varphi_mf\|_{\infty}\|f\|_{\infty}.
\end{equation}
Similarly, using that $\partial_x^2G=G-\delta$ one has that
\begin{equation}\label{est3}
|\varphi_m\partial^2_xG\ast f^2(x)|\leq c\| \varphi_mf\|_{\infty}\|f\|_{\infty}.
\end{equation}

Therefore, inserting \eqref{est2}-\eqref{est3} in \eqref{estimate1} and then using \eqref{bound} it follows that for any $t\in [0,T]$
\begin{equation}\label{final1}
\begin{aligned}
 \|u^{\eps}(t)\varphi_m\|_{\infty} +  \|\partial_xu^{\eps}(t)\varphi_m\|_{\infty} 
&\leq e^{TK}\Big(\|u^{\eps}_0\varphi_m\|_{\infty} +  \|\partial_xu_0^{\eps}\varphi_m\|_{\infty} \\
+ \int_0^T(\|\varphi_m & u^{\eps}(\tau)\|_{\infty}+
\|\varphi_m \partial_xu^{\eps}(\tau)\|_{\infty})d\tau\Big).
\end{aligned}
\end{equation}

Taking limit in \eqref{final1} as $m\uparrow \infty$ one has for any $t\in[0,T]$ 
\begin{equation}
\label{final2}
\begin{aligned}
& \|u^{\eps}(t)\,e^{\theta x}\|_{\infty} +  \|\partial_xu^{\eps}(t)\,e^{\theta x} \|_{\infty} \\
&\leq ce^{TK}( \|u^{\eps}_0\,\max\{1;e^{\theta x}\}\|_{\infty} +  \| \partial_xu^{\eps}_0\,\max\{1;e^{\theta x}\}\|_{\infty})\\
&\leq ce^{TK}( \|u_0\,\max\{1;e^{\theta x}\}\|_{\infty} +  \|\partial_xu_0\,\max\{1;e^{\theta x}\}\|_{\infty}).
\end{aligned}
\end{equation}

Finally, taking the limit as $\eps \downarrow 0$ in \eqref{final2} using the continuous dependence in Theorem \ref{thm1} and passing to subsequence for each $t$ we obtain the desired result
\[
\label{final3}
\begin{aligned}
&\sup_{t\in[0,T]}\|u^{\eps}(t)\,e^{\theta x}\|_{\infty} +  \| \partial_xu^{\eps(t)}\,e^{\theta x} \|_{\infty} \\
&\leq ce^{TK}( \|u_0\,\max\{1;e^{\theta x}\}\|_{\infty} +  \|\partial_xu_0\,\max\{1;e^{\theta x}\}\|_{\infty}).
\end{aligned}
\]
\end{proof}

\begin{proof}[{\bf Proof of Theorem \ref{unique}}]
Once that Theorem \ref{decay} is available the proof of Theorem \ref{unique} follows the argument given in \cite{HMPZ} which for a matter of completeness we sketch here.

Integrating the equation
\[
\label{v00}
\partial_tu+u\partial_xu+\partial_xG\ast M(u)=0,
\]
 where $M(u)=u^2+\frac12(\partial_xu)^2$, it follows that
\begin{equation}
\label{v0}
u(x,t_1)-u_0(x)+\int_0^{t_1} u\partial_xu(x,\tau) d\tau +\partial_x G\ast \int_0^{t_1} M(u)(x,\tau) d \tau.
\end{equation}
By hypothesis one has that
\begin{equation*}
\label{v1}
u(x,t_1)-u_0(x)\sim o(e^{-x}) \;\;\;\text{as}\;\;\; x\uparrow \infty,
\end{equation*}
and by combining the hypothesis \eqref{h1} and Theorem \ref{decay} 
\begin{equation*}
\label{v2}
\int_0^{t_1} u\partial_xu(x,\tau) d\tau \sim o(e^{-x}) \;\;\;\text{as}\;\;\; x\uparrow \infty.
\end{equation*}
Defining
\begin{equation*}
\label{v3}
\mu(x)=\int_0^{t_1} M(u)(x,\tau) d \tau,
\end{equation*}
by Theorem \ref{decay} it follows that
\begin{equation}\label{mu}
0\leq \mu(x)\sim  o(e^{-x}) \;\;\;\text{as}\;\;\; x\uparrow \infty.
\end{equation}
Hence,
\begin{equation*}
\label{E_1E_2}
\begin{aligned}
&\partial_xG\ast \mu(x)=-\frac{1}{2}\int^{\infty}_{-\infty} e^{-|x-y|}\sgn(x-y)\mu(y)dy\\
&=-\frac{1}{2}e^{-x}\int_{-\infty}^xe^y\mu(y)dy+\frac{1}{2}e^x\int_x^{\infty}e^{-y}\mu(y)dy\equiv E_1+E_2.
\end{aligned}
\end{equation*}

Using \eqref{mu} one sees that
\begin{equation*}\label{v4}
E_2=o(1) e^x\int_x^{\infty}e^{-2y}dy\sim o(1)e^{-x}\sim o(e^{-x}).
\end{equation*}
Finally, we observe that if  $\mu \not\equiv 0$ one has that
\[
\int_{-\infty}^x e^y \mu(y)dy\geq c_0>0\;\;\;\;\;\;\text{for}\;\;\;\;\;x\gg 1,
\]
which implies that
$$
-E_1\geq \frac{c_0}{2} e^{-x},\;\;\;\;\;\;\text{for}\;\;\;\;\;x\gg 1.
$$
This combined with the previous estimates inserted in \eqref{v0} yields a contradiction. Therefore, $\mu \equiv 0$ and as a result $u\equiv 0$, which is the desired result.

 \end{proof}




\end{document}